\newtheorem{theorem}{Theorem}
\newtheorem{proposition}[theorem]{Proposition}
\newtheorem{lemma}[theorem]{Lemma}
\theoremstyle{definition}
\newtheorem{definition}[theorem]{Definition}
\numberwithin{equation}{section}
\numberwithin{theorem}{section}
\def\pd{\partial}
\def\dt{\partial_t}
\newcommand{\tri}{{|\hspace{-0.5mm}|\hspace{-0.5mm}|}}
\begin{document}

\abovedisplayskip=8pt plus 2pt minus 4pt
\belowdisplayskip=\abovedisplayskip


\thispagestyle{plain}

\title{Boundary Layers of the Boltzmann Equation \\
in a Three-dimensional Half-space} 

\author{%
{\large\sc Shota Sakamoto${}^1$},
{\large\sc Masahiro Suzuki${}^2$},
\\
and
{\large\sc Katherine Zhiyuan Zhang${}^3$}
}

\date{ \today \\
\bigskip
\normalsize
${}^1$%
Department of Mathematics, 
Tokyo Institute of Technology,
\\
Meguro-ku, Tokyo, 152-8552, Japan
\\ [7pt]
${}^2$%
Department of Computer Science and Engineering, 
Nagoya Institute of Technology,
\\
Gokiso-cho, Showa-ku, Nagoya, 466-8555, Japan
\\ [7pt]
${}^3$%
Courant Institute of Mathematical Sciences, New York University, 
\\
New York, NY 10012, USA
} 

\maketitle

\begin{abstract}
We consider the nonlinear boundary layers of the
Boltzmann equation in a three-dimensional half-space by perturbing around a Maxwellian, under the assumption that the Mach number of the Maxwellian satisfies ${\cal M}_{\infty} < -1$. In \cite{UYY03} and \cite{UYY04},  nonlinear boundary layers of the Boltzmann equation in a half-line are considered, with stationary solutions obtained and nonlinear stability confirmed. In this paper, we establish the unique existence of stationary solutions for the three-dimensional half-space model, and show that the stationary solution is asymptotic stable.

\end{abstract}

\begin{description}
\item[{\it Keywords:}]
Boltzmann equation, boundary layers, time-periodic solution, stationary solution, 
asymptotic stability

\item[{\it 2020 Mathematics Subject Classification:}]
35B40; 
35Q20; 
76P05; 
82C40. 
\end{description}


\newpage

\section{Introduction}
The half-space problem of the kinetic equation has drawn a huge attention and been extensively studied for decades due to its connection with physical phenomena in a very small mean free path regime.
In particular, research on the nonlinear Boltzmann equation in the half-space with various boundary conditions and collision kernels has been conducted for nearly a score of years. As we will see later, most of preceding results focused on the case when the space variable is in $\mathbb{R}_+$, assuming slab-symmetry. Our aims are to analyze the Dirichlet problem without this symmetry and to find a solution in the three-dimensional half-space $\mathbb{R}^3_+$ with its asymptotic profile.

We consider an asymptotic behavior of a solution to the Boltzmann equation
\begin{subequations}
\label{boltz0}
\begin{gather}
\partial_{t} F+\xi\cdot \nabla_x F=Q(F,F), \quad t>0, \  x \in {\mathbb R}^{3}_{+} , \ \xi \in \mathbb R^{3},
\label{boltz1}
\end{gather}
where ${\mathbb R}^{3}_{+} := \{ x{=(x_1, x_2, x_3)} \in \mathbb{R}^3\ \textcolor{blue}{|}\ x_1 > 0 \} $.
The collision operator $Q$ is a bilinear integral operator
\begin{align*}
Q(F,G):=\frac{1}{2} \int_{{\mathbb R^{3}}\times \bm{S}^2} 
&(F(\xi')G(\xi_{*}')+F(\xi_{*}')G(\xi')-F(\xi)G(\xi_{*})-F(\xi_{*})G(\xi))
\notag \\
&\quad \times q(\xi-\xi_{*},\omega)d\xi_{*}d\omega
\end{align*}
with 
\begin{equation*}
\xi'=\xi-[(\xi-\xi_{*})\cdot \omega] \omega,
\quad 
\xi'_{*}=\xi_{*}+[(\xi-\xi_{*})\cdot \omega] \omega.
\end{equation*}
We restrict ourselves to the hard sphere gas for which the collision {kernel} $q=q(\zeta,\omega)$ is given by
\begin{equation*}
q(\zeta,\omega)=\sigma_{0}|\zeta\cdot\omega|,
\end{equation*}
where {$\sigma_0$} is the surface area of the hard sphere.

In this equation, $t>0$,  $x = (x_1, x_2, x_3) = (x_1, x') \in {\mathbb R}^{3}_{+} $, and $\xi = (\xi_{1},\xi_{2},\xi_{3}) \in \mathbb R^{3}$ are the time variable, space variable, and {velocity variable}, respectively.
{The} unknown function $F = F(t,x,\xi)$ stands for the mass density of gas particles.
We put an initial condition
\begin{gather}
F (0,x,\xi) = F_{0}(x,\xi), \quad x \in {\mathbb R}^{3}_{+} , \ \xi \in \mathbb R^{3}
\label{ice}
\end{gather}
and boundary conditions
\begin{gather}
F (t,0,x',\xi) = F_{b}(t,x',\xi), \quad
t>0, \ x' \in \mathbb R^{2}, \ \xi_{1}>0,
\label{bce1}
\\
\lim_{x_{1}\to\infty} F (t,x_{1},x',\xi) \to  M_{\infty}(\xi) \ \ (x_{1} \to \infty), 
\quad t>0, \ x' \in \mathbb R^{2}, \ \xi \in \mathbb R^{3}.
\label{bce2}
\end{gather}
\end{subequations}
Here $M_{\infty}$ is a Maxwellian with constants $\rho_{\infty}>0$, 
$u_{\infty}=(u_{\infty,1},u_{\infty,2},u_{\infty,3}) \in \mathbb R^{3}$, and $T_{\infty}>0$:
\begin{equation*}
M_{\infty}(\xi)=M[\rho_{\infty},u_{\infty},T_{\infty}](\xi)
:=\frac{\rho_{\infty}}{(2\pi T_{\infty})^{3/2}}\exp\left(-\frac{|\xi-u_{\infty}|^{2}}{2T_{\infty}} \right).
\end{equation*}
It is well-known the fact $Q(M_\infty, M_\infty)=0$, hence $M_{\infty}$ satisfies equation \eqref{boltz1}.

Throughout this paper,  
we assume that the equilibrium state $(\rho_{\infty},u_{\infty},T_{\infty})$ satisfies
\begin{equation*}\label{super1}
u_{\infty,2}=u_{\infty,3}=0, \quad 
{\cal M}_{\infty}:=\frac{u_{\infty,1}}{\sqrt{5T_{\infty}{/3}}}<-1,
\end{equation*}
where ${\cal M}_{\infty}$ {stands for} the Mach number of the equilibrium state.
Furthermore, we use the weight function
\begin{equation*}
W_{0}(\xi) := \left( M[1,u_{\infty},T_{\infty}](\xi) \right)^{1/2}
\end{equation*}
and seek the {{solutions}} of \eqref{boltz0} in the form
\begin{equation*}
F(t,x,\xi)=M_{\infty}(\xi) + W_{0}(\xi)f(t,x,\xi).
\end{equation*}
The initial-boundary value problem \eqref{boltz0} can be rewritten as
\begin{subequations}\label{ree0}
\begin{gather}
\partial_{t} f + \xi\cdot \nabla_x f - Lf = \Gamma (f), \quad t>0, \  x \in {\mathbb R}^{3}_{+} , \   \xi \in \mathbb R^{3},
\label{ree1}\\
f(0,x,\xi) = f_{0}(x,\xi) :=  W_{0}^{-1} (F_{0}(x,\xi) - M_{\infty}(\xi)), \quad x \in {\mathbb R}^{3}_{+} , \   \xi \in \mathbb R^{3},
\label{reic1}\\
f(t,0,x',\xi) =  f_{b}(t,x',\xi) := W_{0}^{-1} (F_{b}(t,x',\xi) - M_{\infty}(\xi)), \quad t>0, \  x' \in \mathbb R^{2}, \ \xi_{1} >0,
\label{rebc1}\\
\lim_{x_{1}\to\infty} f (t,x_{1},x',\xi) \to  0 \ \ (x_{1} \to \infty),  \quad t>0, \ x' \in \mathbb R^{2}, \ \xi \in \mathbb R^{3},
\label{rebc2}
\end{gather}
\end{subequations}
where
\begin{gather*}
Lf:=W_{0}^{-1}\left\{ Q(M_{\infty},W_{0}f)+Q(W_{0}f,M_{\infty}) \right\},  \quad
\Gamma(f):=\Gamma(f,f), \quad \Gamma(f,g):=W_{0}^{-1} Q(W_{0}f,W_{0}g).
\end{gather*}
In this paper, we study the unique existence and asymptotic stability of
stationary and time-periodic solutions of \eqref{ree0} for the cases 
that the boundary data $f_{b}$ are time-independent and time-periodic, respectively.
Here and hereafter we discuss the solutions in the sense of distributions unless otherwise noted.

As the problem is settled, let us review the known literature on the half-space problem of the Boltzmann equation. For more detailed reference (including numerical studies which brought the importance of the pair of boundary data~\eqref{bce1}-\eqref{bce2} to light), the reader may consult~\cite{ANSS01, SAY01, BGS01, Sone01, Sone02} and reference therein.

We first review {{the ones analyzing an}} one-dimensional linearized steady problem
\begin{align}\label{LinearProblem}
\xi_1 \partial_{x_1} f-Lf=0,\quad f=f(x_1, \xi).
\end{align}
For $\mathscr{M}_\infty=0$,
Bardos, Caflisch, and Nicolaenko~\cite{BCN01} established the existence of a unique solution to the Milne and Kramers problems for the hard sphere model. These are Dirichlet problems of \eqref{LinearProblem} with specified bulk velocity (or specified asympotics of macroscopic quantities). They also showed exponential decay of the solution to some Maxwellian at far-field ($x_1\rightarrow \infty$).
This result was generalized by Golse and Poupaud~\cite{GP01} for the Maxwellian molecule and moderately soft potential
cases, that is, those when
\begin{align}\label{GeneralPotential}
q(\zeta, \omega)=b\bigg(\frac{\zeta}{\vert \zeta\vert}\cdot \omega \bigg)\vert \zeta\vert^\gamma,\quad   \gamma\in [-2, 0],\quad 0\le b\bigg(\frac{\zeta}{\vert \zeta\vert}\cdot \omega\bigg)\in L^1(\mathbf{S}^2).
\end{align}
Also, Coron, Golse, and Sulem~\cite{CGS01} showed that, for any $\mathscr{M}_\infty$,
there exists a unique solution to \eqref{LinearProblem} for any incoming boundary data satisfying some compatibility conditions and it converges to some Maxwellian at infinity. Note that the number of the conditions are completely characterized by the value of $\mathscr{M}_\infty$, which was conjectured by Cercignani~\cite{Cercignani01}.

After these early results on the linear problem,
Golse, Perthame, and Sulem~\cite{GPS01} {{analyzed}} a nonlinear problem
\begin{align*}
\xi_1 \partial_{x_1} f-Lf=\Gamma(f),\quad f=f(x_1, \xi)
\end{align*}
with the specular boundary condition for the hard sphere model.  However, the method developed in this paper is not applicable to the case of the Dirichlet boundary condition. 
Arkeryd and Nouri~\cite{AN01} studied the fully nonlinear (i.e.\ non-perturbed) Dirichlet problem with a truncated kernel to further study the Milne problem. 
We also remark that there are similar existence and stability results for the discrete velocity model~\cite{Ukai01, KN01, KN02, KNik01}.

After these results, Ukai, Yang, and Yu~\cite{UYY03} succeeded in proving the existence of a unique steady solution to \eqref{ste0} below, where a far-field Maxwellian is initially given, and the set of compatible incoming boundary data forms a $C^1$ manifold whose co-dimension is determined by $\mathscr{M}_\infty\neq 0, \pm 1$ (i.e. non-degenerate values). This solution exponentially converges to the far-field Maxwellian at infinity, which was shown by using the weight function $e^{\sigma x_1}$ to exploit an artificial damping term.  They also showed~\cite{UYY04} that a solution to the corresponding time-dependent problem exponentially converges to the stationary solution if $\mathscr{M}_\infty<-1$, when no compatibility condition on boundary data is required. Later existence for the degenerate cases $\mathscr{M}_\infty=0, \pm 1$ were proved by Golse~\cite{G01}.

Followers of these results then generalized and improved them in various ways: Chen, Liu, and Yang~\cite{CLY01} proved the unique existence of the problem for the cut-off hard potential case, that is, the kernel $q$ is given as in \eqref{GeneralPotential} with $\gamma \in [0,1]$
 (we remark that \cite{CLY01, WYY01} consider slightly more general cases). The stability of a solution with algebraic decay for this case was shown by Wang, Yang, and Yang~\cite{WYY01}. A key idea for these results was to modify the weight function $e^{\sigma x}$ to $e^{\sigma(x,\xi) x}$, where $\sigma(x, \xi)$ is chosen so that it copes with the growth of $\vert \cdot \vert^\gamma$. A similar idea was applied to solve the case of moderately soft potential~\cite{WYY02, Yang01},
 and finally the full range $\gamma\in (-3,1]$ was treated in a unified way for an existence result~\cite{Yang02}. In addition to the study of the problem with the Dirichlet boundary condition, the analysis of ones with various boundary conditions   have been also conducted~\cite{Tian01, TS01, ST01, ST02}.
 
It should be also mentioned that, instead of using the energy method developed in the papers above, Liu and Yu~\cite{LY03} gave the very detailed description of invariant manifolds by finely {{analyzing}} the Green function of the linearized problem (see also~\cite{LY01, LY02}), and as an application they studied the boundary layer.

All of the research above (except~\cite{LY02}) assumed slab-symmetry, meaning
the solutions are independent of the tangential direction $x'$.
This assumption is physically reasonable.  
However, it is of greater interest to consider a situation that
the solutions depend on all directions $(x_{1},x')$.
Our main results below do not require the slab-symmetry.


Among the aforementioned results, we mention the result of \cite{UYY03} {as a proposition,}
since it plays an essential role in our analysis. 
{{In \cite{UYY03}, the unique existence of stationary solutions $\tilde{f}=\tilde{f}(x_{1},\xi)$
over the half-space $\mathbb R_+:=\{ x_{1}>0\}$ was shown.}}
The stationary solution solves a boundary value problem
\begin{subequations}\label{ste0}
\begin{gather}
\xi_{1} \partial_{x_{1}}\tilde{f} - L\tilde{f} = \Gamma (\tilde{f}), \quad x_{1}>0, \   \xi \in \mathbb R^{3},
\label{ste1}\\
\tilde{f}(0,\xi) =  a_{0}(\xi),  \quad \xi_{1}>0,
\label{stbc1}\\
\lim_{x_{1}\to\infty} \tilde{f} (x_{1},\xi) \to  0 \ \ (x_{1}\to \infty),  \quad   \xi \in \mathbb R^{3}.
\label{stbc2}
\end{gather}
\end{subequations}
\begin{proposition}[\cite{UYY03}] \label{ex-st}
Let $\beta> 3/2$. There exist positive constants $\delta_{0}$, $\gamma_{0}$, and $M_{0}$ such that if $\tilde{\delta} \leq \delta_{0}$ and $|a_{0}(\xi)| \leq \tilde{\delta} (1+|\xi|)^{-\beta}$, then the problem \eqref{ste0} has a unique solution $\tilde{f}$ satisfying
\begin{equation*}
|\tilde{f}(x_{1},\xi)| \leq \tilde{\delta} M_{0} e^{-\gamma_{0} x_{1}}(1+|\xi|)^{-\beta}.
\end{equation*}
\end{proposition}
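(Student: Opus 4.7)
The plan is to introduce an exponentially weighted unknown that absorbs the desired spatial decay into the equation, solve the resulting linear problem by a macro-micro decomposition, and then close the nonlinear problem by a contraction argument in a weighted $L^\infty$ space.

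First I would set $g(x_1,\xi) := e^{\sigma x_1}\tilde f(x_1,\xi)$ for some small $\sigma>0$ to be chosen. The boundary value problem \eqref{ste0} becomes
\begin{equation*}
\xi_1\partial_{x_1} g - \sigma\xi_1 g - L g \;=\; e^{-\sigma x_1}\Gamma(g,g), \qquad g(0,\xi)=a_0(\xi)\ (\xi_1>0),
\end{equation*}
together with boundedness (rather than decay) as $x_1\to\infty$. The artificial damping term $-\sigma\xi_1 g$ is the mechanism that makes the linear operator invertible with explicit control on the solution uniformly in $x_1$, and the $e^{-\sigma x_1}$ factor on the right-hand side controls the nonlinearity.

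Next I would treat the linearized problem
\begin{equation*}
\xi_1\partial_{x_1} g - \sigma\xi_1 g - L g = h, \qquad g(0,\xi)=a_0(\xi)\ (\xi_1>0),
\end{equation*}
by the standard decomposition $g = \mathbf{P}g + (\mathbf{I}-\mathbf{P})g$, where $\mathbf{P}$ is the orthogonal projection onto $\ker L$ (a five-dimensional space spanned by $W_0$ times the collision invariants $1,\xi_1,\xi_2,\xi_3,|\xi|^2$). The microscopic part $(\mathbf{I}-\mathbf{P})g$ is controlled using the spectral gap of $-L$ on $(\ker L)^\perp$, with the damping $\sigma\xi_1$ providing an $L^2$-bound at each $x_1$. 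The macroscopic part $\mathbf{P}g$ satisfies a $5\times 5$ first-order ODE system obtained by projecting \eqref{ste1} onto $\ker L$, whose characteristic matrix is, after symmetrization, the same matrix that governs the Euler linearization at the far-field Maxwellian. Under the hypothesis $\mathcal{M}_\infty<-1$ all five characteristic speeds have the same sign, so the number of negative (outgoing) eigenvalues equals the dimension of the incoming boundary data; the ODE system is therefore solvable for arbitrary boundary values with exponential convergence to zero at $x_1\to\infty$. Combining these two pieces yields a bounded linear solution operator in
\begin{equation*}
X_\beta := \Bigl\{\, g \;:\; \|g\|_{X_\beta}:=\sup_{x_1\ge 0,\,\xi\in\mathbb R^3}(1+|\xi|)^{\beta}|g(x_1,\xi)| < \infty \,\Bigr\},
\end{equation*}
the pointwise $(1+|\xi|)^{-\beta}$ bound being recovered by integrating the equation along characteristics and using the $L^\infty_\xi$ estimate for $L^{-1}$ restricted to the microscopic subspace.

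Finally I would set up a contraction mapping on a small ball of $X_\beta$. Define $T:g\mapsto g'$ where $g'$ solves the linear problem with source $e^{-\sigma x_1}\Gamma(g,g)$. The hard-sphere bilinear estimate
\begin{equation*}
(1+|\xi|)^{\beta}\,|\Gamma(g_1,g_2)(\xi)| \;\lesssim\; (1+|\xi|)\,\|g_1\|_{X_\beta}\|g_2\|_{X_\beta}
\end{equation*}
(valid for $\beta>3/2$, which is precisely why this threshold appears in the hypothesis) together with absorption of the extra $(1+|\xi|)$ factor into the damping $\sigma\xi_1$ shows that $T$ maps a ball of radius $\asymp\tilde\delta$ into itself and is a contraction provided $\tilde\delta\le\delta_0$ with $\delta_0$ small enough. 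The fixed point $g\in X_\beta$ gives $\tilde f = e^{-\sigma x_1}g$, and the stated bound with $\gamma_0=\sigma$ and some $M_0$ follows immediately.

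The main obstacle is the macroscopic analysis: one needs to verify that, exactly under the supersonic condition $\mathcal{M}_\infty<-1$, the projected ODE system can be solved with arbitrarily prescribed incoming data while still decaying at infinity. This is a nontrivial spectral calculation — it is the technical heart of the Coron–Golse–Sulem counting of boundary conditions — and without it the weighted scheme cannot close, since any compatibility obstruction on $a_0$ would have to be tracked through the nonlinear iteration.
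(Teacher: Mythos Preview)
The paper does not prove this proposition; it is quoted verbatim from \cite{UYY03} and used as a black box throughout. Your sketch is a faithful outline of the original Ukai--Yang--Yu argument: the exponential weight $e^{\sigma x_1}$ producing the artificial damping $-\sigma\xi_1$, the macro--micro splitting with the negativity of $P\xi_1 P$ on $\ker L$ under $\mathcal{M}_\infty<-1$ (which is exactly the content of Proposition~\ref{LGamma}\,(v) in the present paper), and the contraction in the weighted $L^\infty_\beta$ space using the bilinear estimate for $\Gamma$.

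One small refinement: in \cite{UYY03} the linear solvability is not obtained by literally writing down and integrating a $5\times5$ macroscopic ODE, but rather by an $L^2$ energy estimate that exploits $(P\xi_1 P f,f)_{L^2_\xi}\le -\nu_2\|Pf\|^2$ together with the coercivity of $-L$ on $(I-P)f$, followed by a bootstrap from $L^2$ to $L^\infty_\beta$ via the decomposition $L=-\nu+K$ and the smoothing property of $K$. Your description of the macroscopic part as an explicit ODE with eigenvalue counting is morally equivalent but is closer to the Coron--Golse--Sulem linear framework than to the actual mechanics of \cite{UYY03}; in the supersonic regime $\mathcal{M}_\infty<-1$ the two viewpoints coincide since all five modes are outgoing and no compatibility condition on $a_0$ arises.
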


\subsection{Notation} \label{ss-Notation}

We introduce notation used often in this paper.
For {a domain} $\Omega \subset \mathbb R^{n}$ and $1 \leq p \leq \infty$,
the space $L^p(\Omega)$ denotes the standard $L^{p}$-Lebesgue spaces.
We abbreviate $L^p(\mathbb R^{3}_{+} \times \mathbb R^{3})$, $L^p(\mathbb R^{3})$,
and $L^p(\mathbb R \times \mathbb R^{3}_{+} \times \mathbb R^{3})$
{by} $L^{p}$, $L^{p}_{\xi}$, and $L^{p}_{t,x,\xi}$, respectively. 
Note that  $\|\cdot\|:=\| \cdot \|_{L^{2}}$.
In the case $p=2$, { the spaces $L^{2}$, $L^{2}_{\xi}$, and $L^{2}_{t,x,\xi}$ are Hilbert spaces 
with the inner products $(\cdot, \cdot)_{L^2}$, $(\cdot, \cdot)_{L^2_{\xi}}$, and $(\cdot,\cdot)_{L^{2}_{t,x,\xi}}$, }respectively.
For 
$\beta > 0$, 
$L^{\infty}_{\beta}(\mathbb R^{3})$ denotes for 
the weighted $L^{\infty}$-Lebesgue space defined as
\begin{gather*}
L^{\infty}_{\beta}(\mathbb R^{3}):=\left\{f \in L^{\infty}(\mathbb R^{3}) \ \left| \ \|f\|_{L^{\infty}_{\beta}(\mathbb R^{3})}:=\sup_{\xi \in \mathbb R^{3}} |\langle \xi \rangle^{\beta}f(\xi)| <+\infty \right\}\right.,
\end{gather*}
where $\langle \xi \rangle:=1+|\xi|$. 
For $\Omega\subset \mathbb{R}^3$, we similarly define $L^\infty_\beta(\Omega\times \mathbb{R}^3)$,  the velocity-weighted Lebesgue space over the phase space.
We abbreviate $L^{\infty}_{\beta}(\mathbb R^{3}_{+} \times \mathbb R^{3})$ and $L^{\infty}_{\beta}(\mathbb R^{3})$
{by} $L^{\infty}_{\beta}$ and $L^{\infty}_{\xi,\beta}$, respectively, and also write those norm as $\|\cdot\|_{\beta}$ and $\|\cdot\|_{\xi,\beta}$.
Furthermore, we use the function space $L^{\infty}_{x}L^{2}_{\xi}$:
\begin{gather*}
L^{\infty}_{x}L^{2}_{\xi}:=\left\{f \in L^{1}_{\rm loc}(\mathbb R^{3}_{+} \times \mathbb R^{3}) \ | \ \|f\|_{L^{\infty}_{x}L^{2}_{\xi}} <+\infty \right\},
\quad 
\|f\|_{L^{\infty}_{x}L^{2}_{\xi}}:=\sup_{x \in \Omega} \| f (x,\cdot)\|_{L^{2}_{\xi}}.
\end{gather*}
Note that $L^{\infty}_{x}L^{2}_{\xi} \supset L^{\infty}_{\beta}$ holds if $\beta>3/2$.
For $t_{0} \in \mathbb R$, $\kappa \geq 0$, and $\beta>0$, 
we define the solution space $X(t_{0},\kappa,\beta)$ as
\begin{align*}
X({t_{0},\kappa,\beta})&:=\{f \in  L^{\infty}(t_{0},\infty;L^{\infty}_{\beta}) \cap C([t_{0},\infty) ; L^{2})\, | \, \tri f\tri_{t_{0},\kappa,\beta} <\infty\}, 
\\
\tri f \tri_{t_0,\kappa,\beta}&:=\sup_{t_0 \leq \tau <\infty} e^{\kappa \tau } [[f(\tau)]]_{\beta}, \quad
\text{where $[[\ \cdot \ ]]_{\beta}:=\|\cdot\| +\|\cdot\|_{^{\beta}}$}.
\end{align*}

Let $\pd_{x_{i}} := \frac{\pd}{\pd x_i}$ and $\dt := \frac{\pd}{\pd t}$.
The operator $\nabla_{x} := (\pd_{x_{1}} ,\pd_{x_{2}},\pd_{x_{3}})$
denotes {{the standard gradient with respect to $x = (x_1,x_2,x_3)$}}.
The operator $\nabla_{x'} := (\pd_{x_{2}},\pd_{x_{3}})$ denotes {{the tangential gradient with respect to $x'=(x_2,x_3)$}}.
We use $c$ and $C$ to denote generic positive constants 
depending only on $\sigma_{0}$, $\rho_{\infty}$, $u_{\infty}$, $T_{\infty}$, $\beta$, $\gamma_{0}$, $M_{0}$, and $R$, 
where $R$ is a positive constant being in \eqref{bc1}.
Let us also denote a generic positive constant {additionally depending} 
on other parameters $a$, $b$, $\ldots$ by $C(a,\, b,\, \ldots)$.
Furthermore, $A \lesssim B$ means $A \leq C B$ for the generic constant $C$ given above.

\subsection{Main results}
This subsection provides our main results on the stationary and time-periodic solutions.
We first discuss the stationary solutions $f^{s}(x,\xi)$ over the half-space ${\mathbb R}^{3}_{+} $
by regarding $f^{s}(x,\xi)$ as a perturbation of $\tilde{f}(x_{1}, \xi)$.
The stationary solutions satisfy 
\begin{subequations}\label{sree0}
\begin{gather}
 \xi\cdot \nabla_x f^{s} - Lf^{s} = \Gamma (f^{s}), \quad   x \in {\mathbb R}^{3}_{+} , \   \xi \in \mathbb R^{3},
\label{sree1}\\
f^{s}(0,x',\xi) =  f_{b}(x',\xi), \quad  x' \in \mathbb R^{2}, \ \xi_{1} >0,
\label{srebc1}\\
\lim_{x_{1}\to\infty} f^{s} (x_{1},x',\xi) \to  0 \ \ (x_{1} \to \infty),  \quad  x' \in \mathbb R^{2}, \ \xi \in \mathbb R^{3}.
\label{srebc2}
\end{gather}
\end{subequations}
To make assumptions on the boundary data $f_{b}$, let us set 
\begin{equation}\label{bc1}
f_{b}^{s}(x',\xi):=f_{b}(x',\xi)-\varphi_{R}(x')a_{0}(\xi)
\end{equation}
for $a_{0}$ being in Proposition \ref{ex-st} 
and $\varphi_{R}(x'):=\varphi (|x'|/R)$ with $R>0$, where $\varphi \in C^{\infty}_{0}(\overline{\mathbb R_{+}})$
is a non-negative function with  $\varphi(s)=1$ if $s \leq 1$, $\varphi(s)=0$ if $s \geq 2$.
Now we assume that the boundary condition ${f}_{b}$ is time-independent and satisfies
\begin{gather}\label{bc_assump1}
\langle\xi\rangle {f}_{b}^{s} \in L^{2}(\mathbb R^{2}\times\mathbb R^{3}), \quad
{f}_{b}^{s} \in L^{\infty}_{\beta}(\mathbb R^{2}\times\mathbb R^{3}).
\end{gather}
For the notational convenience, we use
\begin{equation*}
\delta^{s}:=\tilde{\delta}+\|\langle\xi\rangle {f}_{b}^{s}  \|_{L^{2}(\mathbb R^{2}\times\mathbb R^{3})} + \| {f}_{b}^{s} \|_{L^{\infty}_{\beta}(\mathbb R^{2}\times\mathbb R^{3})},
\end{equation*}
where $\tilde{\delta}$ is being in Proposition \ref{ex-st}.

To state {{the}} main theorems, we introduce an extension of the boundary data: 
\begin{equation*}
F_{b}(x,\xi):=\chi(\xi_{1}){f}_{b}^{s}(x_{2}-x_{1}\xi_{2}/\xi_{1},x_{3}-x_{1}\xi_{3}/\xi_{1},\xi),
\end{equation*}
where $\chi(s)$ is the 1D characteristic function of the set $\{ s > 0 \}$.
Note that $F_{b}(x,\xi)$ satisfies
\begin{gather*}
\xi \cdot \nabla_{x} F_{b}=0, \quad x \in \mathbb R^{3}_{+}, \ \xi \in \mathbb R^{3},
\\
F_{b}(0,x',\xi) =  {f}_{b}^{s}(x',\xi), \quad  x' \in \mathbb R^{2}, \ \xi_{1} >0.
\end{gather*}
We also use the function $U^{s}(x,\xi):=\varphi(x_{1}) F_{b}(x,\xi)$ which satisfies
\begin{subequations}\label{ExBdry0}
\begin{gather}
\xi \cdot \nabla U^{s}= F_{b} \xi_{1} \partial_{x_{1}} \varphi,
\quad x \in \mathbb R^{3}_{+}, \ \xi \in \mathbb R^{3},
\label{ExBdry4} \\
U^{s}(0,x',\xi)={f}_{b}^{s}(x',\xi),  \quad  x' \in \mathbb R^{2}, \ \xi_{1} >0,
\label{ExBdry1} \\
U^{s}(x_1,x',\xi)=0, \quad x_1 >2, \ x' \in \mathbb R^{2}, \ \xi \in \mathbb R^{3}.
\label{ExBdry2} 
\end{gather}
It is straightforward to check that
\begin{equation}
\|(\langle\xi\rangle U^{s}, \langle\xi\rangle (\partial_{x_{1}} \varphi_{1}) F_{b})\|  + \|(U^{s},F_{b})\|_{\beta}
\lesssim \delta^{s}.
\label{ExBdry3}
\end{equation}
\end{subequations}
The existence and stability results on time-periodic solutions 
are summarized in the following theorems. We use the solution space $X(0,0,\beta)$, where
\begin{gather*}
X({t_{0},\kappa,\beta})=\{f \in  L^{\infty}(t_{0},\infty;L^{\infty}_{\beta}) \cap C([t_{0},\infty) ; L^{2} )\, | \, \tri f\tri_{t_{0},\kappa,\beta} <\infty\} \quad \text{for $t_{0}\in\mathbb R$, $\kappa \geq 0$},
\\
\tri f \tri_{t_0,\kappa,\beta}=\sup_{t_0 \leq \tau <\infty} e^{\kappa \tau } [[f(\tau)]]_{\beta}, \quad
[[\ \cdot \ ]]_{\beta}=\|\cdot\| +\|\cdot\|_{^{\beta}}.
\end{gather*}

\begin{theorem}\label{th1}
Let $\beta>7/2$ and $R>0$.  
Assume that the boundary data $f_{b}$ is time-independent and satisfies \eqref{bc_assump1}.
There {exist} constants $\sigma>0$, $\eta_{0}>0$,  and $C_0>0$
such that if $\delta^{s} \leq \eta_{0}$, the stationary problem \eqref{sree0} has
a unique solution $f^{s}$ as $e^{\sigma x_{1}}(f^{s}-\varphi_{R}\tilde{f}-U^{s}) \in X(-\infty,0,\beta)$ and
$[[ e^{\sigma x_{1}}(f^{s}-\varphi_{R}\tilde{f}-U^{s}) ]]_{\beta} \leq C_{0}\delta^{s}$.
\end{theorem}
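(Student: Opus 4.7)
The approach is to construct $f^s$ as a perturbation of the explicit ansatz $\varphi_R \tilde{f} + U^s$, where $\tilde{f}$ is supplied by Proposition \ref{ex-st} and $U^s$ comes from \eqref{ExBdry0}. Setting $g := f^s - \varphi_R \tilde{f} - U^s$, the boundary identities $\tilde f(0,\xi) = a_0(\xi)$ and $U^s(0,x',\xi) = f_b^s(x',\xi)$ give $g(0,x',\xi) = 0$ for $\xi_1 > 0$, and $g \to 0$ as $x_1 \to \infty$. Subtracting \eqref{ste1} multiplied by $\varphi_R$ and using \eqref{ExBdry4}, one finds that $g$ solves
\begin{gather*}
\xi \cdot \nabla_x g - L g - 2\Gamma(\varphi_R \tilde{f} + U^s,\, g) = \Gamma(g,g) + H,
\end{gather*}
where the remainder
\begin{gather*}
H = -(\xi' \cdot \nabla_{x'}\varphi_R)\tilde{f} - F_b \xi_1 \partial_{x_1}\varphi + L U^s + (\varphi_R^2 - \varphi_R) \Gamma(\tilde{f}) + 2\varphi_R \Gamma(\tilde{f}, U^s) + \Gamma(U^s)
\end{gather*}
collects all the non-commuting pieces of the ansatz. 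Thanks to the exponential bound $|\tilde f(x_1,\xi)| \lesssim \tilde\delta\, e^{-\gamma_0 x_1}\langle\xi\rangle^{-\beta}$ from Proposition \ref{ex-st}, the compact $x_1$-support of $\partial_{x_1}\varphi$ and $\nabla_{x'}\varphi_R$, the estimate \eqref{ExBdry3}, and the standard bilinear bound $\|\Gamma(f,g)\|_\beta \lesssim \|f\|_\beta \|g\|_\beta$ valid for $\beta > 3/2$, one checks that $[[\, e^{\sigma x_1} H\, ]]_\beta \lesssim \delta^s$ for every $0 < \sigma \leq \gamma_0$.

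The heart of the proof is the a priori analysis of the weighted linear problem satisfied by $G := e^{\sigma x_1} g$,
\begin{gather*}
\xi \cdot \nabla_x G - \sigma \xi_1 G - L G - 2 \Gamma(\varphi_R \tilde{f} + U^s,\, G) = \mathcal{S},
\end{gather*}
with $G(0,x',\xi) = 0$ for $\xi_1 > 0$ and $G \to 0$ as $x_1 \to \infty$. I would establish two complementary bounds. A weighted $L^2$ estimate comes from testing against $G$: the inflow boundary contribution vanishes, the outflow part has favorable sign, the collision operator supplies microscopic coercivity $-(G, LG) \gtrsim \|(I-P)G\|^2$ through the spectral gap of $L$ (where $P$ denotes the $L^2_\xi$-projection onto $\ker L$), and the critical ingredient is that the $-\sigma \xi_1 G$ term together with the supersonic condition ${\cal M}_\infty < -1$ produces positivity on the macroscopic component $PG$, implementing in three dimensions the one-dimensional mechanism of \cite{UYY03} after tangential Plancherel. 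A weighted $L^\infty_\beta$ estimate then follows from the Ukai–Caflisch splitting $L = -\nu(\xi) + K$, with $\nu(\xi) \gtrsim \langle\xi\rangle$ and $K$ compact: integrating along the straight characteristics $x - s\xi$ back to the boundary when $\xi_1 > 0$ and forward to $x_1 = \infty$ when $\xi_1 \leq 0$, the integrating factor involving $-\sigma\xi_1$ stays uniformly bounded once $\sigma$ is small relative to $\nu$, and the compact contribution $KG$ is absorbed via the standard $L^\infty$--$L^2$ iteration fed by the preceding $L^2$ bound. The combined outcome is a linear estimate $[[G]]_\beta \lesssim [[\mathcal{S}]]_\beta$.

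With the linear theory in hand, the nonlinear equation is closed by a contraction argument on the ball $\{G : [[G]]_\beta \leq C_0 \delta^s\}$ in $X(-\infty,0,\beta)$: the map $\Psi$ sending $G$ to the solution of the linear problem with source $e^{-\sigma x_1} \Gamma(G,G) + e^{\sigma x_1} H$ is a self-map and a strict contraction when $\delta^s$ is small, thanks to the bilinear bound $[[\, e^{-\sigma x_1}\Gamma(G,G)\, ]]_\beta \lesssim [[G]]_\beta^2$ and the source estimate from the first paragraph. The fixed point produces $f^s = \varphi_R \tilde{f} + U^s + e^{-\sigma x_1} G$, and uniqueness in the stated class follows from applying the same linear estimate to the difference of two candidate solutions.

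The most delicate step is the weighted $L^2$ estimate for the linearized three-dimensional problem. In the slab-symmetric case of \cite{UYY03} the macro-micro decomposition reduces to an ODE system whose dissipation is dictated directly by ${\cal M}_\infty$; here one must run the argument across all tangential Fourier modes, and the coupling between tangential advection by $\xi'$ and the $x_1$-hyperbolic structure has to be handled without losing the supersonic coercivity provided by the weight. A secondary difficulty is that $\varphi_R \tilde f + U^s$ is small only in the combined norm $[[\cdot]]_\beta$ rather than pointwise in $\xi$, which forces one to absorb the term $2\Gamma(\varphi_R\tilde f + U^s, G)$ through the fine structure of $\Gamma$ (using the decay of $\nu^{-1}K$-type operators) instead of a crude supremum of the coefficient.
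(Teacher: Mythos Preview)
Your approach is genuinely different from the paper's, and the difference is not cosmetic. The paper does \emph{not} solve the stationary problem \eqref{sree0} directly. Instead it (i) proves time-global solvability of the initial--boundary value problem \eqref{pe0} via the semigroup $S(t)$ and a contraction on the Duhamel formulation (Theorem~\ref{global1}); (ii) for any period $T^{*}$, shows that the time-shifts $g_{k}(t)=g(t+kT^{*})$ of the global solution form a Cauchy sequence in $X$ and hence converge to a time-periodic solution (Lemma~\ref{apes}, Proposition~\ref{tps2}); and (iii) when $U,H$ are time-independent, observes that the resulting periodic solution is independent of the choice of $T^{*}$, and by a density argument with periods $T^{*}/2^{l}$ concludes it is in fact stationary (Proposition~\ref{ss1}). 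Uniqueness is handled separately by the energy method (Proposition~\ref{tps1}), which is essentially the $L^{2}$ estimate you describe. The authors say explicitly that ``it is hard to directly solve the stationary problem'' and adopt this Valli-type detour through the time-dependent problem precisely to avoid constructing a stationary linear solver in three space dimensions.

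That is exactly where your proposal has a real gap. Your a~priori $L^{2}$ estimate is correct---it is the computation in \eqref{unies2}--\eqref{unies10}, and the tangential transport term indeed drops out in the energy identity---and your $L^{\infty}_{\beta}$ bootstrap via characteristics and the $L^{\infty}$--$L^{2}$ iteration is plausible. But you then define the contraction map $\Psi$ as ``sending $G$ to the solution of the linear problem'' without ever establishing that such a solution exists. A~priori bounds give uniqueness and estimates, not existence; the obvious Neumann iteration $\xi\cdot\nabla_{x}G_{n+1}+(\nu-\sigma\xi_{1})G_{n+1}=KG_{n}+\mathcal{S}$ does not contract in $L^{\infty}_{\beta}$ because $K$ is not small, and your stationary characteristic formula provides no analogue of the time-convolution gain (the Jacobian $t^{-3}$ in the paper's $\bar J_{0}$ analysis) that would let you close the $L^{\infty}$--$L^{2}$ loop constructively. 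The paper's time-dependent route supplies existence for free via the $C_{0}$-semigroup $S(t)$ on $L^{2}$. A secondary slip: $[[e^{\sigma x_{1}}H]]_{\beta}$ is not bounded by $\delta^{s}$ because of the $LU^{s}\sim -\nu U^{s}$ term, which costs one power of $\langle\xi\rangle$; the correct estimate is $[[e^{\sigma x_{1}}H]]_{\beta-1}\lesssim\delta^{s}$ as in \eqref{H1}, and the linear theory must absorb that loss (cf.\ Lemma~\ref{lemS*}).
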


\begin{theorem}\label{th2}
Let $\beta>7/2$ and $R>0$. 
Assume that the boundary data $f_{b}$ is time-independent and satisfies \eqref{bc_assump1}.
There {exist} constants $\sigma>0$, $\eta_{0}>0$,  and $C_0>0$
such that if $[[e^{\sigma x_{1}}(f_{0}-\varphi_{R}\tilde{f}-U^{s})]]_{\beta}+\delta^{s} \leq \eta_{0}$, 
the initial-boundary value problem \eqref{ree0} has
a unique solution $f$ as $e^{\sigma x_{1}}(f-\varphi_{R}\tilde{f}-U^{s}) \in X(0,0,\beta)$ and
$\tri e^{\sigma x_{1}}(f-\varphi_{R}\tilde{f}-U^{s}) \tri_{0,0,\beta} 
\leq C_{0}([[e^{\sigma x_{1}}(f_{0}-\varphi_{R}\tilde{f}-U^{s})]]_{\beta}+\delta^{s})$.
Moreover, it holds that
\begin{equation*}
[[e^{\sigma x_{1}}(f-f^{s})(t)]]_{\beta} \leq C e^{-\kappa t/2} \quad \text{for $t>0$},
\end{equation*}
where $\kappa$ and $C$ are some positive constants.
\end{theorem}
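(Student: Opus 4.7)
The plan is to construct $f$ as a perturbation of the reference profile $\varphi_R\tilde{f}+U^s$ already used in Theorem \ref{th1}, obtaining the existence assertion, and then to upgrade the resulting uniform-in-$t$ bound to the claimed exponential-in-$t$ decay by comparing $f$ to the stationary solution $f^s$. To begin, I would write $f = \varphi_R(x')\tilde{f}(x_1,\xi) + U^s(x,\xi) + g(t,x,\xi)$. By \eqref{bc1} and \eqref{ExBdry1} the two reference terms exactly account for $f_b$, so $g$ carries zero inflow data $g(t,0,x',\xi)=0$ for $\xi_1>0$, initial datum $g(0) = f_0 - \varphi_R\tilde{f} - U^s$, and an equation
\[
\partial_t g + \xi\cdot\nabla_x g - Lg - 2\Gamma(\varphi_R\tilde{f} + U^s,\,g) = \Gamma(g,g) + \mathcal{R},
\]
where $\mathcal{R}$ collects the defect of $\varphi_R\tilde{f}+U^s$ from solving \eqref{ree1}: terms of the form $\tilde{f}\,\xi'\cdot\nabla_{x'}\varphi_R$, $(\varphi_R-\varphi_R^2)\Gamma(\tilde{f})$, $F_b\,\xi_1\partial_{x_1}\varphi$, $LU^s$, and $\Gamma$-cross-terms in $U^s$. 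By Proposition \ref{ex-st} and \eqref{ExBdry3} every summand of $\mathcal{R}$ carries either an $e^{-\gamma_0 x_1}$ or a compact-in-$x_1$ support factor, with total size $\lesssim\delta^s$.

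Next I would set $h := e^{\sigma x_1} g$ for a small $\sigma\in(0,\gamma_0)$; since $L$ and $\Gamma$ act only in $\xi$, the equation for $h$ gains an artificial damping $-\sigma\xi_1 h$ and retains zero inflow data. I would then develop the linear theory for the operator $\partial_t + \xi\cdot\nabla_x - \sigma\xi_1 - L - 2\Gamma(\varphi_R\tilde{f}+U^s,\,\cdot)$ by combining (i) a weighted $L^2$ energy estimate that exploits the spectral gap of $-L$ on $(\ker L)^\perp$, the damping $-\sigma\xi_1$ on the outgoing part, and the supersonic assumption $\mathcal{M}_\infty<-1$ to control the five hydrodynamic projections via the macroscopic equations, and (ii) a pointwise $L^\infty_\beta$ estimate from the mild formulation along the backward characteristics $x-(t-s)\xi$. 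Feeding these bounds into a Picard iteration and closing in the norm $\tri\cdot\tri_{0,0,\beta}$ under $\delta^s + [[h(0)]]_\beta$ small will yield the first assertion.

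For the decay assertion I would compare $f$ to the stationary solution $f^s$ from Theorem \ref{th1}. Setting $g^\star := f - f^s$ and $h^\star := e^{\sigma x_1} g^\star$, the difference satisfies
\[
\partial_t g^\star + \xi\cdot\nabla_x g^\star - L g^\star - 2\Gamma(f^s, g^\star) = \Gamma(g^\star, g^\star), \qquad g^\star\big|_{x_1=0,\,\xi_1>0} = 0,
\]
with no inhomogeneous forcing. The smallness of $f^s$ controlled by $\delta^s$ preserves the spectral gap of the linearization, so the weighted $L^2$ estimate — now homogeneous — gives $\tfrac{d}{dt}\|h^\star\|^2 + c\|h^\star\|^2 \lesssim \|h^\star\|\,\|\Gamma(h^\star,h^\star)\|$. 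Gronwall then yields $\|h^\star(t)\|\lesssim e^{-\kappa t}\|h^\star(0)\|$, and bootstrapping through the mild formulation upgrades this $L^2$ decay to the stated $[[h^\star(t)]]_\beta\lesssim e^{-\kappa t/2}$.

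The main obstacle will be closing the weighted $L^2$ coercivity in the genuinely three-dimensional setting. The tangential transport $\xi'\cdot\nabla_{x'}$ commutes with $e^{\sigma x_1}$ and contributes no artificial damping, yet it couples the five hydrodynamic modes in $\ker L$; one must recover their coercivity from the macroscopic equations, using the supersonic sign $\mathcal{M}_\infty<-1$ to rule out unstable boundary modes — the three-dimensional analogue of the spectral picture isolated in \cite{UYY03}. Reconciling this with the $L^\infty_\beta$ estimate, where backward characteristics can travel arbitrarily far in $x'$ before leaving the slab, is where the hypothesis $\beta>7/2$ enters: it supplies enough decay in $\langle\xi\rangle$ to absorb both the quadratic growth of $\Gamma$ and the $|\xi_1|^{-1}$ Jacobian produced at grazing incidence.
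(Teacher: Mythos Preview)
Your overall architecture is correct and matches the paper: reformulate via $g=f-\varphi_R\tilde f-U^s$, pass to $h=e^{\sigma x_1}g$ to gain the damping term $-\sigma\xi_1 h$, close a fixed-point in $X(0,0,\beta)$ for existence, and then compare to the stationary solution for exponential decay. The paper carries this out by reducing to Theorem~\ref{th5}, which in turn rests on Theorem~\ref{global1} (existence via contraction using the semigroup $S(t)$ and Lemmas~\ref{lem2}--\ref{lemS*}) and Section~\ref{sec7} (stability). Your direct comparison $g^\star=f-f^s$ for the decay step is a slight variant of the paper's time-translation argument $g_k(t)=g(t+kT^*)$ in Lemma~\ref{apes}, but both lead to the same homogeneous difference equation and the same conclusion.

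Where your proposal goes astray is in the paragraph you flag as the ``main obstacle''. The tangential transport $\xi'\cdot\nabla_{x'}$ causes no difficulty in the $L^2$ estimate: after integrating over $\mathbb R^3_+\times\mathbb R^3$ it contributes nothing, since $\int_{\mathbb R^2}\partial_{x_j}(h^2)\,dx'=0$ for $j=2,3$. Consequently there is no coupling of hydrodynamic modes to resolve via macroscopic equations. The coercivity on $Ph$ comes instead from the purely algebraic fact (Proposition~\ref{LGamma}(v)) that the supersonic condition $\mathcal M_\infty<-1$ makes $A=P\xi_1P$ negative definite on $N$, so that the damping term gives $-\sigma(\xi_1 h,h)_{L^2_\xi}\ge \sigma\nu_2\|Ph\|_{L^2_\xi}^2$ up to cross terms absorbable by $-L$; see the computation \eqref{unies3}--\eqref{unies6}. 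This is identical to the one-dimensional mechanism in \cite{UYY03,UYY04} and requires no three-dimensional adaptation. Likewise, your explanation of the threshold $\beta>7/2$ is off: it is not a Jacobian issue at grazing incidence but the condition needed for the $L^2$ bound $\|\Gamma(f,g)\|\lesssim \|f\|_\beta\|g\|+\|f\|\|g\|_\beta$ in Proposition~\ref{LGamma}(iv), which is what lets you estimate $[[\Gamma(\bar g)]]_{\beta-1}$ in the contraction step (Lemma~\ref{lemMap}).
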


Next we discuss the time-periodic solutions $f^{*}$ with a period $T^{*}$ to
\begin{subequations}\label{tree0}
\begin{gather}
\partial_{t} f^{*} + \xi\cdot \nabla_x f^{*} - Lf^{*} = \Gamma (f^{*}), \quad  t \in \mathbb R, \ x \in {\mathbb R}^{3}_{+} , \   \xi \in \mathbb R^{3},
\label{tree1}\\
{f}^{*}(t+T^{*},x,\xi)={f}^{*}(t,x,\xi), \quad  t \in \mathbb R, \ x \in {\mathbb R}^{3}_{+} , \   \xi \in \mathbb R^{3},
\\
f^{*}(t,0,x',\xi) =  f_{b}(t,x',\xi), \quad  t \in \mathbb R,  \ x' \in \mathbb R^{2}, \ \xi_{1} >0,
\label{trebc1}\\
\lim_{x_{1}\to\infty} f^{*} (t,x_{1},x',\xi) \to  0 \ \ (x_{1} \to \infty),  \quad  x' \in \mathbb R^{2}, \ \xi \in \mathbb R^{3},
\label{trebc2}
\end{gather}
\end{subequations}
assuming that
the boundary data $f_{b}$ have higher regularity than \eqref{bc_assump1}. 
Similar to \eqref{bc1}, let us set
\begin{equation*}
{f}_{b}^{*}(t,x',\xi):=f_{b}(t,x',\xi)-\varphi_{R}(x')a_{0}(\xi)
\end{equation*}
and then assume that the boundary condition ${f}_{b}$ satisfies
\begin{subequations}\label{bc_assump2}
\begin{gather}
{f}_{b}^{*}(t+T^{*},x,\xi)={f}_{b}^{*}(t,x,\xi), \quad  t \in \mathbb R, \ x \in {\mathbb R}^{3}_{+} , \   \xi \in \mathbb R^{3},
\\
\langle\xi\rangle {f}_{b}^{*}, \partial_{t}{f}_{b}^{*}, \langle\xi\rangle \nabla_{x'}{f}_{b}^{*} \in C(\mathbb R ; L^{2}(\mathbb R^{2}\times\mathbb R^{3})), 
\\
{f}_{b}^{*}, \langle\xi\rangle^{-1} \partial_{t}{f}_{b}^{*}, \nabla_{x'} {f}_{b}^{*} \in L^{\infty}(\mathbb R; L^{\infty}_{\beta}(\mathbb R^{2}\times\mathbb R^{3})),
\end{gather}
\end{subequations}
where $T^{*}$ is a positive constant. For the notational convenience, we use
\begin{align*}
\delta^{*}:=&\tilde{\delta}+\sup_{0 \leq t \leq T^{*} } \left\{ \|(\langle\xi\rangle {f}_{b}^{*},\partial_{t}{f}_{b}^{*},\langle\xi\rangle \nabla_{x'}{f}_{b}^{*} )(t) \|_{L^{2}(\mathbb R^{2}\times\mathbb R^{3})} \right.
\notag \\
&+ \left. \| ({f}_{b}^{*},\langle\xi\rangle^{-1}\partial_{t}{f}_{b}^{*},\nabla_{x'}{f}_{b}^{*})(t) \|_{L^{\infty}_{\beta}(\mathbb R^{2}\times\mathbb R^{3})} \right\}.
\end{align*}
Let us define an extension of the boundary data
\begin{equation*}
U^{*}(t,x,\xi):=\varphi(x_{1})f_{b}^{*}(t,x',\xi).
\end{equation*}
It is straightforward to see that
\begin{subequations}\label{tExBdry0}
\begin{gather}
U^{*}(t,0,x',\xi)=f_{b}^{*}(t,x',\xi),
\label{tExBdry1} \\
U^{*}(t,x_1,x',\xi)=0 \quad t \in \mathbb R, \ x_1 >2, \ x' \in \mathbb R^{2}, \ \xi \in \mathbb R^{3},
\label{tExBdry2} \\
U^{*}(t+T^{*},x,\xi)=U^{*}(t,x,\xi), \quad  t \in \mathbb R, \ x \in {\mathbb R}^{3}_{+} , \   \xi \in \mathbb R^{3},
\label{tExBdry4} \\
\sup_{0 \leq t \leq T^{*} }\left\{\|(\langle\xi\rangle U^{*}, \partial_{t}U^{*}, \langle\xi\rangle \nabla_{x} U^{*})(t)\|  + \|(U^{*},  \langle\xi\rangle^{-1}\partial_{t}U^{*}, \nabla_{x} U^{*})(t)\|_{\beta} \right\}  \lesssim  \delta^*.
\label{tExBdry3}
\end{gather}
\end{subequations}

\begin{theorem}\label{th6}
Let $\beta>7/2$ and $R>0$.  
Assume that the boundary data $f_{b}$ satisfies \eqref{bc_assump2}.
There {exist} constants $\sigma>0$, $\eta_{0}>0$,  and $C_0>0$
such that if $\delta^{*} \leq \eta_{0}$, the time-periodic problem \eqref{tree0} has
a unique solution $f^{*}$ as $e^{\sigma x_{1}}(f^{*}-\varphi_{R}\tilde{f}-U^{*}) \in X(-\infty,0,\beta)$ and
$\tri e^{\sigma x_{1}}(f^{*}-\varphi_{R}\tilde{f}-U^{*}) \tri_{-\infty,0,\beta} \leq C_{0}\delta^{*}$.
\end{theorem}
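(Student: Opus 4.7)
The plan is to reduce Theorem \ref{th6} to a Poincar\'e-map fixed-point problem, leaning on the same weighted $L^{2}\cap L^{\infty}_{\beta}$ machinery that underlies Theorems \ref{th1}--\ref{th2}.

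First I would decompose $f^{*}=\varphi_{R}\tilde{f}+U^{*}+g^{*}$ with $\tilde f$ from Proposition \ref{ex-st} and $U^{*}$ from \eqref{tExBdry0}. Substituting into \eqref{tree0}, the perturbation $g^{*}$ solves
\begin{equation*}
\partial_{t}g^{*}+\xi\cdot\nabla_{x}g^{*}-Lg^{*}
=\Gamma(g^{*})+2\Gamma(g^{*},\varphi_{R}\tilde f+U^{*})+S^{*}(t,x,\xi),
\end{equation*}
with homogeneous data on $\{x_{1}=0,\xi_{1}>0\}$, decay as $x_{1}\to\infty$, and $T^{*}$-periodicity in $t$. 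The forcing $S^{*}$ packages the commutator $\xi_{1}(\partial_{x_{1}}\varphi_{R})\tilde f$, the extension residual $\xi\cdot\nabla_{x}U^{*}+\partial_{t}U^{*}$, and the terms $L(\varphi_{R}\tilde f+U^{*})+\Gamma(\varphi_{R}\tilde f+U^{*})$; by Proposition \ref{ex-st} and \eqref{tExBdry3} it is $T^{*}$-periodic with $\sup_{t}(\|\langle\xi\rangle S^{*}(t)\|+\|S^{*}(t)\|_{\beta})\lesssim\delta^{*}$. I then set $h^{*}:=e^{\sigma x_{1}}g^{*}$, which satisfies
\begin{equation*}
\partial_{t}h^{*}+\xi\cdot\nabla_{x}h^{*}-\sigma\xi_{1}h^{*}-Lh^{*}
=e^{\sigma x_{1}}\bigl(\Gamma(g^{*})+2\Gamma(g^{*},\varphi_{R}\tilde f+U^{*})+S^{*}\bigr),
\end{equation*}
with vanishing incoming trace. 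Under $\mathcal{M}_{\infty}<-1$, for $\sigma>0$ small the extra multiplier $-\sigma\xi_{1}$ supplements the coercivity of $-L$, exactly as in \cite{UYY03,UYY04}.

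Next I would build the periodic solution by a Poincar\'e map. For each small datum $h_{0}$, the weighted version of Theorem \ref{th2} should give a unique global $h(\cdot;h_{0})\in X(0,0,\beta)$ with $\tri h(\cdot;h_{0})\tri_{0,0,\beta}\lesssim [[h_{0}]]_{\beta}+\delta^{*}$, and, for two such data, the exponential decay of differences $[[h(t;h_{0})-h(t;\tilde h_{0})]]_{\beta}\leq C e^{-\kappa t/2}[[h_{0}-\tilde h_{0}]]_{\beta}$. Define $\Phi(h_{0}):=h(T^{*};h_{0})$; because $S^{*}$ and $U^{*}$ are $T^{*}$-periodic, any fixed point of $\Phi$ extends periodically to a solution on all of $\mathbb{R}$. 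The uniform bound shows $\Phi$ maps a small $[[\cdot]]_{\beta}$-ball into itself, and the decay estimate makes either $\Phi$ itself or an iterate $\Phi^{N}$ (with $Ce^{-\kappa N T^{*}/2}<1$) a strict contraction there; by uniqueness the $\Phi^{N}$-fixed point is automatically $\Phi$-invariant. Banach's theorem then yields the periodic $h^{*}$, and hence $f^{*}=\varphi_{R}\tilde f+U^{*}+e^{-\sigma x_{1}}h^{*}$ with the claimed bound.

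\textbf{Main obstacle.} The genuine work sits inside the weighted analogue of Theorem \ref{th2}: closing the nonlinear $L^{2}\cap L^{\infty}_{\beta}$ a priori estimate for $h$ and the exponential decay for differences. Three issues are delicate. The commutator source $\xi_{1}(\partial_{x_{1}}\varphi_{R})\tilde f$, though localized in $x_{1}\in[R,2R]$, carries the factor $e^{\sigma x_{1}}e^{-\gamma_{0}x_{1}}$, so I would need $\sigma<\gamma_{0}$. The linear coupling $2\Gamma(g^{*},\varphi_{R}\tilde f+U^{*})$ has coefficient of size $\delta^{*}$ and must be absorbed into the dissipation generated by $-L-\sigma\xi_{1}$, which relies on the spectral gap of $L$ in the supersonic regime. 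Finally, since $\Gamma$ does not close in $L^{2}$ alone, one must bootstrap via an $L^{\infty}_{\beta}$ estimate along characteristics; this is where the threshold $\beta>7/2$ enters, ensuring that the cubic $L^{\infty}$ estimate for $\Gamma$ and the terms involving $\partial_{t}U^{*}$ and $\nabla_{x'}U^{*}$ from \eqref{tExBdry3} are all controllable. Once these weighted estimates are in hand, the Poincar\'e map construction above closes essentially by formal iteration.
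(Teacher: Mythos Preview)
Your reduction and the weighted-space machinery match the paper exactly: the same decomposition $f^{*}=\varphi_{R}\tilde f+U^{*}+e^{-\sigma x_{1}}h^{*}$, the same linearized operator $\partial_{t}+\xi\cdot\nabla_{x}-\sigma\xi_{1}-L$ with coercivity coming from $\mathcal{M}_{\infty}<-1$, and the same global solvability in $X(0,0,\beta)$ with exponential decay (the paper's Theorem \ref{global1} and Lemmas \ref{lem2}--\ref{lemS*}). The paper does not, however, use a Poincar\'e map. Instead it follows Valli \cite{Va83}: take any global solution $g$ with small data, set $g_{k}(t):=g(t+kT^{*})$, and show via the same decay estimates that $\{g_{k}\}$ is Cauchy in $X$ (Lemma \ref{apes}); the limit $g^{*}$ is automatically $T^{*}$-periodic because $g_{k+1}(t)=g_{k}(t+T^{*})$. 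This sidesteps isolating a Lipschitz constant for the time-$T^{*}$ map, though the Cauchy estimate \eqref{exiapes0} and your contraction estimate for $\Phi^{N}$ are essentially the same computation read two ways. Your route is equally valid and perhaps more familiar from dynamical systems; the paper's route is marginally more economical because it never has to track how the contraction constant depends on $T^{*}$.

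One small correction: $\varphi_{R}=\varphi_{R}(x')$ depends only on the tangential variables, so the commutator you wrote as $\xi_{1}(\partial_{x_{1}}\varphi_{R})\tilde f$ is identically zero; the actual contribution to the source is $-\sum_{i=2}^{3}\xi_{i}(\partial_{x_{i}}\varphi_{R})\tilde f$ (see \eqref{defH0}). The constraint on $\sigma$ (the paper takes $\sigma\le\gamma_{0}/2$) does appear, but it comes from keeping $e^{\sigma x_{1}}\varphi_{R}\tilde f$ and the quadratic term $e^{\sigma x_{1}}\Gamma(\varphi_{R}^{1/2}\tilde f)$ bounded, not from an $x_{1}$-localization of $\varphi_{R}$.
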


\begin{theorem}\label{th7}
Let $\beta>7/2$ and $R>0$. 
Assume that the boundary data $f_{b}$ satisfies \eqref{bc_assump2}.
There {exist} constants $\sigma>0$, $\eta_{0}>0$,  and $C_0>0$
such that if $[[e^{\sigma x_{1}}(f_{0}-\varphi_{R}\tilde{f}-U^{*}(0))]]_{\beta}+\delta^{*} \leq \eta_{0}$, 
the initial-boundary value problem \eqref{ree0} has
a unique solution $f$ as $e^{\sigma x_{1}}(f-\varphi_{R}\tilde{f}-U^{*}) \in X(0,0,\beta)$ and
$\tri e^{\sigma x_{1}}(f-\varphi_{R}\tilde{f}-U^{*}) \tri_{0,0,\beta} 
\leq C_{0}([[e^{\sigma x_{1}}(f_{0}-\varphi_{R}\tilde{f}-U^{*}(0))]]_{\beta}+\delta^{*})$.
Moreover, it holds that
\begin{equation*}
[[e^{\sigma x_{1}}(f-f^{*})(t)]]_{\beta} \leq C e^{-\kappa t/2} \quad \text{for $t>0$},
\end{equation*}
where $\kappa$ and $C$ are some positive constants.
\end{theorem}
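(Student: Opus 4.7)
The plan is to parallel the proof of Theorem~\ref{th2}, with $U^{s}$ replaced by the time-periodic extension $U^{*}$. Write $f = \varphi_{R}\tilde{f} + U^{*} + g$ and set $G := e^{\sigma x_{1}} g$. Since $\varphi_{R}\tilde{f}$ is built from the $1$D stationary solution of Proposition~\ref{ex-st} and $U^{*}$ absorbs the boundary data via \eqref{tExBdry1}, $G$ solves an initial--boundary value problem with zero boundary data, initial data $G(0) = e^{\sigma x_{1}}(f_{0} - \varphi_{R}\tilde{f} - U^{*}(0))$, and source of size $O(\delta^{*})$ generated by plugging $\varphi_{R}\tilde{f} + U^{*}$ into the equation (using \eqref{tExBdry3} and Proposition~\ref{ex-st}). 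Schematically,
\begin{equation*}
\partial_{t} G + \xi\cdot\nabla_{x} G - \sigma\xi_{1} G - LG = 2\Gamma(\varphi_{R}\tilde{f} + U^{*}, G) + e^{-\sigma x_{1}}\Gamma(G,G) + S.
\end{equation*}
Since $\mathcal{M}_{\infty} < -1$, a sufficiently small $\sigma > 0$ renders the artificial dissipation $-\sigma\xi_{1}$ coercive at the hydrodynamic level (the UYY mechanism already harnessed in the preceding theorems). Combined with coercivity of $-L$ on the microscopic component and the smallness of the compactly supported coefficient $\varphi_{R}\tilde{f} + U^{*}$, this yields a clean $L^{2}$ energy estimate; closing the scheme in $X(0,0,\beta)$ then requires an $L^{\infty}_{\beta}$ estimate obtained by integrating the mild form along characteristics, using the standard velocity-weighted bounds for $\Gamma$. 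A contraction in a small ball of $X(0,0,\beta)$ delivers the unique solution with the claimed bound.

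\textbf{Stability (part ii).} Set $h := f - f^{*}$ and $H := e^{\sigma x_{1}} h$. Subtracting \eqref{tree0} from \eqref{ree0} yields
\begin{equation*}
\partial_{t} H + \xi\cdot\nabla_{x} H - \sigma\xi_{1} H - LH = 2\Gamma(f^{*}, H) + e^{-\sigma x_{1}}\Gamma(H,H),
\end{equation*}
with zero boundary data (both $f$ and $f^{*}$ equal $f_{b}$ on $\{x_{1}=0,\ \xi_{1}>0\}$) and initial data $H(0) = e^{\sigma x_{1}}(f_{0} - f^{*}(0))$. By Theorem~\ref{th6}, $\tri e^{\sigma x_{1}}(f^{*} - \varphi_{R}\tilde{f} - U^{*})\tri_{-\infty,0,\beta} \leq C_{0}\delta^{*}$, so that $2\Gamma(f^{*},\cdot)$ acts as a small, $T^{*}$-periodic coefficient perturbation of $L$. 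Repeating the $L^{2}$ estimate of part~(i), but testing with $e^{2\kappa t} H$ for a small $\kappa > 0$, gives an inequality of the schematic form $\tfrac{d}{dt}(e^{2\kappa t}\|H\|^{2}) + (\kappa_{0} - 2\kappa - C\delta^{*})e^{2\kappa t}\|H\|^{2} \lesssim e^{2\kappa t}\|H\|_{\beta}\|H\|^{2}$. For $\delta^{*}$ and $\kappa$ sufficiently small, the linear damping dominates and a Gronwall argument produces $L^{2}$ decay at rate $e^{-\kappa t}$. A second pass through the mild formulation, combined with this $L^{2}$ decay and the a priori bound from part~(i), upgrades the estimate to $[[H(t)]]_{\beta} \leq C e^{-\kappa t/2}$.

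\textbf{Main obstacle.} The principal difficulty is closing the $L^{\infty}_{\beta}$ part of the norm with an exponential rate matching the $L^{2}$ decay. Since $\Gamma$ is not closed on $L^{\infty}_{\beta}$ without loss, the characteristic estimate produces time integrals of $\Gamma(H,H)$ that must be controlled through the $L^{2}$ decay already established, requiring a careful bootstrap; this is where the compact support of $\varphi_{R}\tilde{f} + U^{*}$ and the smallness from Theorem~\ref{th6} are crucial to prevent the perturbation term $\Gamma(f^{*},H)$ from spoiling the rate. Relative to the stationary stability result (Theorem~\ref{th2}), time-periodicity forces the stronger regularity assumption \eqref{bc_assump2}: the bounds on $\partial_{t} U^{*}$ and $\nabla_{x'} U^{*}$ from \eqref{tExBdry3} appear both in the source $S$ of part~(i) and implicitly in controlling the time-regularity of $f^{*}$ used in part~(ii). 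Verifying that these terms can be absorbed without degrading the exponential rate is the delicate technical point of the argument.
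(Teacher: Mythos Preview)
Your part (i) matches the paper's Theorem~\ref{global1}: the reformulation $G=e^{\sigma x_1}(f-\varphi_R\tilde f-U^*)$ is exactly the paper's $g$ in \eqref{pe0}, and the contraction in $X(0,0,\beta)$ is precisely Lemma~\ref{lemMap}. The only cosmetic difference is that you phrase the $L^2$ control as an energy estimate, whereas the paper packages the same coercivity (Proposition~\ref{LGamma}~(ii),~(v)) into the semigroup decay Lemma~\ref{lem2} and then runs the fixed point through Lemmas~\ref{lemS} and~\ref{lemS*}.

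Part (ii) is correct but follows a genuinely different route. You take $f^*$ as given by Theorem~\ref{th6}, write the difference equation for $H=e^{\sigma x_1}(f-f^*)$, obtain $L^2$ decay by an energy/Gronwall argument, and then bootstrap to $[[\cdot]]_\beta$ via the mild form. The paper instead never writes this difference equation: it uses the time-translation trick $g_k(t)=g(t+kT^*)$, proves in Lemma~\ref{apes} a single Cauchy-type estimate $\tri g_l-g_k\tri_{-kT^*,\kappa/2,\beta}\le C_0 e^{-\kappa kT^*/2}$ via the mild formulation and Lemma~\ref{lemS*}, and then both the construction of $g^*$ (as $\lim_k g_k$) and its stability (set $k=0$, let $l\to\infty$) fall out of the same inequality. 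Your approach is more direct and textbook; the paper's is more economical in that one estimate does double duty. Note also that your energy step for $L^2$ decay is in fact redundant: once you write $H(t)=S(t)H(0)+S*[\,2\Gamma(f^*,H)+e^{-\sigma x_1}\Gamma(H,H)\,]$, Lemma~\ref{lemS*} with $t_0=0$ closes the full $[[\cdot]]_\beta$ norm (both $L^2$ and $L^\infty_\beta$) in one stroke, exactly as in \eqref{exiapes4}, since the coefficient $\delta^*+\tri H\tri_{0,0,\beta}$ is small. Your ``main obstacle'' is thus already resolved by the paper's linear machinery; no separate $L^2$-to-$L^\infty_\beta$ bootstrap is needed.
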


Note that it is hard to directly solve the stationary problem \eqref{sree0}. 
This is different from the case when one has $\mathbb{R}_+$,
where the stationary solution only depends on $x_1$ and 
therefore the equation \eqref{ree1} reduces to an ODE \eqref{ste1}. 
To get around this difficulty, we borrow the ideas in \cite{ST20,SZ20,Va83},
which discussed the stationary solutions depending all direction $(x_{1},x')$
for the Euler--Poisson equations and the compressible Navier--Stokes equation.
More precisely, we first prove the existence of a time-global solution to the problem \eqref{ree0}
by using the weight function $e^{\sigma x_1}$ introduced in \cite{UYY03,UYY04}.  
By a similar method as in \cite{ST20,SZ20,Va83},
we construct a time-periodic solution making use of this time-global solution. 
If the boundary data $f_{b}$ is time-independent, 
it can be shown by the arbitrariness of period that the time-periodic solution is also time-independent.

Before closing this section, we mention the outline of this paper.
In Section \ref{sec2}, we reformulate the initial-boundary value problem \eqref{boltz0} 
into an initial-boundary value problem for a perturbation 
from the stationary solution $\tilde{f}$ in the half-space, 
as stated in \eqref{pe0}.  
In Section \ref{sec3}, we provide some preliminaries of the Boltzmann equation and the initial-boundary value problem \eqref{pe0}. In particular, we establish crucial estimates on the collision kernel, see {Proposition} \ref{LGamma} and Lemma \ref{K1}.
In Section \ref{sec4}, we consider the linearized problem corresponding to \eqref{pe0} and obtain decay estimates for the linear solution operators. The derivation of the decay estimate involves methods introduced in \cite{UYY04}.
In Section \ref{sec5}, we show the unique existence of the time-global solution to the reformulated problem \eqref{pe0} with small data (see Theorem \ref{global1}) by proving an a priori estimate. 
In Section \ref{sec6}, we consider translated time-global solutions $g_{k}(t,x,\xi):=g(t+kT^*,x,\xi)$ for any $T^* >0$ and $k=0,1,2,\ldots$ to \eqref{pe0} and construct time-periodic and stationary solutions of equation \eqref{pe0}, and in the end we prove {that those solutions are time asymptotic stable.}

\section{Reformulation}\label{sec2}
For the proof of Theorems \ref{th1}--\ref{th7},
we begin by reformulating {{the}} initial-boundary value problem \eqref{ree0}. 
Let us introduce perturbation
\begin{gather*}
g(t,x,\xi):= e^{\sigma x_{1}}(f-\varphi_{R}\tilde{f}-U) (t,x,\xi),
\end{gather*}
where $U=U^{s},U^{*}$ and $\sigma \in (0, \gamma_{0}/2]$ with $ \gamma_{0}$ being in Proposition \ref{ex-st}.

Owing to equations \eqref{ree1}, (\ref{ste1}) and \eqref{ExBdry4}, the perturbation $g$ satisfies the equation
\begin{subequations}\label{pe0}
\begin{gather}
\partial_{t}g + \xi \cdot \nabla_{x} g - \sigma \xi_{1} g - Lg  
= e^{-\sigma x_{1}} \Gamma(g) +  2\Gamma(\varphi_{R}\tilde{f}+U,g)  + e^{\sigma x_{1}}H.
\label{pe1}
\end{gather}
The boundary and initial conditions for $g$  follow from 
(\ref{reic1}), (\ref{rebc1}), (\ref{rebc2}), (\ref{stbc1}), and (\ref{stbc2}) as
\begin{gather}
g(0,x,\xi) = g_{0}(x,\xi) := e^{\sigma x_{1}} (f_{0}(x,\xi) - \varphi_{R} \tilde{f}(x,\xi) -U(0,x,\xi)), \quad x \in {\mathbb R}^{3}_{+} , \   \xi \in \mathbb R^{3},
\label{pic1}\\
g(t,0,x',\xi) = 0, \quad t>0, \  x' \in \mathbb R^{2}, \ \xi_{1} >0,
\label{pbc1}\\
\lim_{x_{1}\to\infty} g (t,x_{1},x',\xi) \to  0 \ \ (x_{1} \to \infty),  \quad t>0, \ x' \in \mathbb R^{2}, \ \xi \in \mathbb R^{3}.
\label{pbc2}
\end{gather}
\end{subequations}
Here the inhomogeneous term $H=H(t,x,\xi)$ is defined by
\begin{equation}\label{defH0}
H:=\left\{\begin{array}{ll}
\Gamma(\varphi_{R}^{1/2}\tilde{f}) -\sum_{i=2}^{3} \xi_{i} (\partial_{x_{i}}\varphi_{R})\tilde{f} - F_{b}\xi_{1} \partial_{x_{1}}  \varphi_{1} + LU + \Gamma(\varphi_{R}\tilde{f}+U) & \text{if $U=U^{s}$},
\\
 \Gamma(\varphi_{R}^{1/2}\tilde{f}) -\sum_{i=2}^{3} \xi_{i} (\partial_{x_{i}}\varphi_{R})\tilde{f} -\partial_{t} U- \xi \cdot \nabla U + LU + \Gamma(\varphi_{R}\tilde{f}+U) & \text{if $U=U^{*}$}.
\end{array}\right.
\end{equation}
Note that $U$ and $H$ are either time-independent or time-periodic.
We also see that
\begin{gather}
[[U]]_{\beta}
\lesssim  
\left\{
\begin{array}{ll}
\delta^{s} \quad \text{if $U=U^{s}$},
\\
\delta^{*} \quad \text{if $U=U^{*}$},
\end{array}
\right.
\label{U1}
\\
[[e^{\sigma x_{1}} H ]]_{{\beta-1}} \lesssim
\left\{
\begin{array}{ll}
\delta^{s} \quad \text{for $\beta>7/2$ if $U=U^{s}$},
\\
\delta^{*} \quad \text{for $\beta>7/2$ if $U=U^{*}$}.
\end{array}
\right.
\label{H1}
\end{gather}
The first inequality follows from \eqref{ExBdry3} and \eqref{tExBdry3}.
The proof of the second inequality is postponed until Lemma \ref{L:H1}.

It suffices to show Theorems \ref{th4} and \ref{th5} below 
for the completion of the proofs of Theorems \ref{th1}--\ref{th7}.
We remark that the stationary (time-independent) problem
is a special case of the time-periodic problem.

\begin{theorem}\label{th4}
Let $\beta>7/2$, $R>0$, and $(U,\delta)=(U^{s},\delta^{s}),(U^{*},\delta^{*})$.
Assume that $U$ and $H$ are time-periodic with a period $T^{*}$ and satisfies \eqref{U1} and \eqref{H1}.
There {exist} constants $\sigma>0$, $\eta_{0}>0$,  and $C_0>0$
such that if $\delta \leq \eta_{0}$, the time-periodic problem corresponding to \eqref{pe0} has
a unique solution $g^{*} \in X(-\infty,0,\beta)$ with
$\tri g^{*} \tri_{-\infty,0,\beta} \leq C_{0}\delta$.
Furthermore, $g^{*}$ is time-independent if $U$ and $H$ are time-independent.
\end{theorem}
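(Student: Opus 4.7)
The plan is to reduce Theorem~\ref{th4} to two ingredients: a global-in-time existence result for the initial-boundary value problem \eqref{pe0}, and a contraction argument on period-translates of the global solution.

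First, I would establish that whenever $\tri g_{0}\tri_{\beta}+\delta$ is sufficiently small, the system \eqref{pe0} admits a unique global solution $g\in X(0,0,\beta)$ with
\[
\tri g \tri_{0,0,\beta} \;\leq\; C\bigl(\tri g_{0}\tri_{\beta}+\delta\bigr).
\]
This is precisely Theorem~\ref{global1} of Section~\ref{sec5}. It is proved by a Picard iteration applied to the mild formulation associated with the linear operator $\partial_{t}+\xi\cdot\nabla_{x}-\sigma\xi_{1}-L$ (with zero Dirichlet datum on $\xi_{1}>0$), using the decay estimates of the homogeneous linear semigroup from Section~\ref{sec4} together with the bilinear estimates on $\Gamma$ from Proposition~\ref{LGamma} and Lemma~\ref{K1}. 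The nonlinearity contributes $O(\tri g \tri^{2})$ and $O((\tilde\delta+\delta)\tri g\tri)$ terms, while the forcing satisfies $[[e^{\sigma x_{1}}H]]_{\beta-1}\lesssim\delta$ by \eqref{H1}, so the map is a contraction on a small ball in $X(0,0,\beta)$.

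Second, choosing $g_{0}=0$ for definiteness and letting $g$ be the resulting global solution, I would define the translates $g_{k}(t,x,\xi):=g(t+kT^{*},x,\xi)$ for $k\in\mathbb{N}$. Since $U$ and $H$ are $T^{*}$-periodic, each $g_{k}$ solves \eqref{pe0} on $[0,\infty)$ with initial datum $g(kT^{*})$, and $\tri g_{k}\tri_{0,0,\beta}\lesssim\delta$ uniformly in $k$. The difference $g_{k+1}-g_{k}$ satisfies a linear Boltzmann system with vanishing boundary datum and vanishing inhomogeneous forcing, driven only by the quadratic terms $e^{-\sigma x_{1}}(\Gamma(g_{k+1})-\Gamma(g_{k}))+2\Gamma(\varphi_{R}\tilde{f}+U, g_{k+1}-g_{k})$. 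The linear decay of Section~\ref{sec4}, combined with the smallness of $\tilde\delta+\delta$ and $\tri g_{k}\tri+\tri g_{k+1}\tri$, then yields
\[
\tri g_{k+1}-g_{k}\tri_{0,0,\beta} \;\leq\; \lambda\, \tri g_{k}-g_{k-1}\tri_{0,0,\beta}, \qquad \lambda<1.
\]
Hence $\{g_{k}\}$ is Cauchy in $X(0,0,\beta)$; its limit $g^{*}$ satisfies $g^{*}(t+T^{*})=g^{*}(t)$ by passage to the limit in the identity $g_{k+1}(t)=g_{k}(t+T^{*})$, extends periodically to $t\in\mathbb{R}$, and inherits $\tri g^{*}\tri_{-\infty,0,\beta}\leq C_{0}\delta$. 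Uniqueness of the periodic solution follows from applying the same linear decay to the difference of two candidates, which then satisfies a linear system with zero data and must vanish under periodicity. For the time-independent subclaim, when $U$ and $H$ are time-independent they are $T^{*}$-periodic for \emph{every} $T^{*}>0$; the construction thus produces a $T^{*}$-periodic solution for each such $T^{*}$, and by the uniqueness just established all of them coincide with a single function $g^{*}$, which is therefore invariant under every positive time-shift and hence time-independent.

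The principal technical difficulty lies in the linear decay estimate underlying Steps~1 and~2. The damping $-\sigma\xi_{1}g$ produced by the weight $e^{\sigma x_{1}}$ is \emph{not} sign-definite in $\xi$, so exponential decay of the linear semigroup cannot be obtained from a naive energy estimate. Instead one must combine the microscopic coercivity of $-L$ with the supersonic condition $\mathcal{M}_{\infty}<-1$, which rules out the otherwise-resonant macroscopic modes at the boundary (as in \cite{UYY04}), and must track the estimates simultaneously in $L^{2}$ and in $L^{\infty}_{\beta}$ so that the bilinear $\Gamma$-estimates close in the composite norm $[[\,\cdot\,]]_{\beta}$. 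Once this weighted linear decay is in place, the nonlinear Picard and translate-Cauchy arguments above are fairly routine.
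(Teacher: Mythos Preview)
Your overall strategy matches the paper's: global solvability (Theorem~\ref{global1}), then convergence of the translates $g_{k}(t)=g(t+kT^{*})$ to a periodic limit, followed by uniqueness and the time-independent reduction. Two details deserve comment.

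First, the contraction inequality $\tri g_{k+1}-g_{k}\tri_{0,0,\beta}\le\lambda\,\tri g_{k}-g_{k-1}\tri_{0,0,\beta}$ with a fixed $\lambda<1$ does not follow as stated. From the Duhamel representation and smallness one gets $\tri g_{k+1}-g_{k}\tri_{0,0,\beta}\le C\,[[(g_{k+1}-g_{k})(0)]]_{\beta}=C\,[[(g_{k}-g_{k-1})(T^{*})]]_{\beta}$ with a constant $C$ that need not be $<1$; a genuine contraction requires one period of exponential decay to beat $C$, which fails for small $T^{*}$. The paper avoids this by proving a direct Cauchy estimate (Lemma~\ref{apes}): writing the Duhamel formula for $g_{l}-g_{k}$ on $[-kT^{*},\infty)$ and using the decay of $S(t)$ from Lemmas~\ref{lem2}--\ref{lemS*} gives $\tri g_{l}-g_{k}\tri_{-kT^{*},\kappa/2,\beta}\le C_{0}e^{-\kappa kT^{*}/2}$, which is Cauchy irrespective of how small $T^{*}$ is. Your argument can be repaired by observing that all differences are time-shifts of $g_{1}-g_{0}$ and proving once that $[[(g_{1}-g_{0})(t)]]_{\beta}\lesssim e^{-\kappa t/2}$, but as written the contraction step is incomplete.

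Second, for uniqueness you use semigroup decay plus periodicity, whereas the paper (Proposition~\ref{tps1}) uses a direct $L^{2}$ energy estimate over one period, exploiting coercivity of $-L$ and the negative definiteness of $P\xi_{1}P$ from Proposition~\ref{LGamma}\,(v). Both routes work; the paper's has the advantage of operating directly at the weak-solution level without invoking the Duhamel representation for a solution only known to satisfy \eqref{weak1}. For the time-independent conclusion your sketch is correct in spirit; the paper implements it by comparing the $T^{*}$- and $T^{*}/2^{l}$-periodic solutions via uniqueness, obtaining $g^{*}(0)=g^{*}(iT^{*}/2^{l})$ and then using density and $g^{*}\in C(\mathbb{R};L^{2})$.
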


\begin{theorem}\label{th5}
Let $\beta>7/2$, $R>0$, and $(U,\delta)=(U^{s},\delta^{s}),(U^{*},\delta^{*})$.
Assume that $U$ and $H$ are time-periodic with a period $T^{*}$ and satisfies \eqref{U1} and \eqref{H1}.
There {exist} constants $\sigma>0$, $\eta_{0}>0$,  and $C_0>0$
such that if $[[g_{0}]]_{\beta}+\delta \leq \eta_{0}$, 
the initial-boundary value problem \eqref{pe0} has a unique solution $g \in X(0,0,\beta)$ with
$\tri g \tri_{0,0,\beta} \leq C_{0}([[g_{0}]]_{\beta}+\delta)$.
Moreover, it holds that
\begin{equation*}
[[(g-g^{*})(t)]]_{\beta} \leq C e^{-\kappa t/2} \quad \text{for $t>0$},
\end{equation*}
where $\kappa$ and $C$ are some positive constants.
\end{theorem}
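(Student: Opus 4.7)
The plan is to prove Theorem \ref{th5} in two stages. First I establish the unique existence of a time-global solution $g\in X(0,0,\beta)$ to \eqref{pe0} satisfying the stated bound; then I analyze the difference $w:=g-g^{*}$ between $g$ and the time-periodic (or stationary) solution $g^{*}$ furnished by Theorem \ref{th4} to obtain the exponential convergence.

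For the existence part, I would run a Picard iteration on $[0,\infty)$ built on the linear decay theory of Section \ref{sec4}. Setting $g^{(0)}\equiv 0$, define $g^{(n+1)}$ by solving the linear initial-boundary value problem
\[
\partial_{t}g^{(n+1)} + \xi\cdot\nabla_{x}g^{(n+1)} - \sigma\xi_{1}g^{(n+1)} - Lg^{(n+1)} - 2\Gamma(\varphi_{R}\tilde f + U, g^{(n+1)}) = e^{-\sigma x_{1}}\Gamma(g^{(n)}) + e^{\sigma x_{1}}H,
\]
with initial datum $g_{0}$ and the zero incoming boundary condition \eqref{pbc1}--\eqref{pbc2}. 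The linear estimates from Section \ref{sec4} (with the linear perturbation $2\Gamma(\varphi_{R}\tilde f + U,\cdot)$ absorbed on the left thanks to $\tilde\delta+\delta\lesssim\eta_{0}$), together with the bilinear bound on $\Gamma$ and the source estimate \eqref{H1}, give
\[
\tri g^{(n+1)} \tri_{0,0,\beta} \leq C\bigl([[g_{0}]]_{\beta} + \delta + \tri g^{(n)} \tri_{0,0,\beta}^{2}\bigr).
\]
A bootstrap under $[[g_{0}]]_{\beta}+\delta\leq\eta_{0}$ yields a uniform bound $\tri g^{(n)}\tri_{0,0,\beta}\leq C_{0}([[g_{0}]]_{\beta}+\delta)$, and the same estimate applied to $g^{(n+1)}-g^{(n)}$ gives a contraction in $X(0,0,\beta)$. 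Its unique fixed point is the desired $g$, with uniqueness inherited automatically. This is essentially the iteration used for Theorem \ref{th4}, the difference being that the initial layer at $t=0$ is now dictated by $g_{0}$ rather than by time-periodicity.

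For the asymptotic stability I set $w:=g-g^{*}$. Subtracting the equations for $g$ and $g^{*}$ and using bilinearity of $\Gamma$, the difference solves
\[
\partial_{t}w + \xi\cdot\nabla_{x}w - \sigma\xi_{1}w - Lw - 2\Gamma(\varphi_{R}\tilde f+U,w) = e^{-\sigma x_{1}}\bigl(\Gamma(w,g)+\Gamma(g^{*},w)\bigr),
\]
with zero incoming boundary data (both $g$ and $g^{*}$ satisfy the same Dirichlet condition), the same vanishing far-field limit, and initial value $w(0)=g_{0}-g^{*}(0)$ for which $[[w(0)]]_{\beta}\lesssim [[g_{0}]]_{\beta}+\delta$. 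Crucially, the forcing $e^{\sigma x_{1}}H$ has canceled. The linear semigroup of Section \ref{sec4}, which decays exponentially at some rate $\kappa>0$ in $X(0,\kappa,\beta)$ for perturbations with zero boundary data, then yields
\[
\tri w \tri_{0,\kappa,\beta} \leq C[[w(0)]]_{\beta} + C\bigl(\tri g \tri_{0,0,\beta}+\tri g^{*} \tri_{-\infty,0,\beta}\bigr)\tri w \tri_{0,\kappa,\beta}.
\]
Since the coefficient multiplying $\tri w\tri_{0,\kappa,\beta}$ on the right is $\lesssim\eta_{0}$, for $\eta_{0}$ small enough it is absorbed on the left, giving $\tri w \tri_{0,\kappa,\beta}\lesssim [[g_{0}]]_{\beta}+\delta$. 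Unfolding the definition of $\tri\cdot\tri_{0,\kappa,\beta}$ yields $[[w(t)]]_{\beta}\lesssim e^{-\kappa t}\leq e^{-\kappa t/2}$ for all $t>0$, which is stronger than the advertised rate.

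The real obstacle behind both arguments is the linear decay step itself: one must show that the full operator $\partial_{t}+\xi\cdot\nabla_{x}-\sigma\xi_{1}-L-2\Gamma(\varphi_{R}\tilde f+U,\cdot)$ generates an exponentially decaying semigroup in the combined $L^{2}\cap L^{\infty}_{\beta}$ topology on the three-dimensional half-space, with no slab symmetry available to reduce the problem. The dissipation draws from two sources: the artificial damping $-\sigma\xi_{1}$ (genuinely dissipative because $\mathcal{M}_{\infty}<-1$ makes outgoing characteristics dominate on the boundary) and the spectral gap of $L$ on the orthogonal complement of its kernel. Combining these in a weighted $L^{2}$ energy estimate and then bootstrapping to $L^{\infty}_{\beta}$ along characteristics via a Grad-type splitting of the compact operator is precisely the content of Section \ref{sec4}; once that linear machinery is in place, the nonlinear iteration and the perturbative analysis of $w$ reduce to standard bookkeeping.
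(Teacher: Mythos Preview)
Your existence argument is essentially the paper's Theorem~\ref{global1}: a contraction on $X(0,0,\beta)$ built from the linear semigroup $S(t)$ of Section~\ref{sec4}. The only cosmetic difference is that you place the linear term $2\Gamma(\varphi_{R}\tilde f+U,\,\cdot\,)$ on the left and speak of an ``absorbed'' perturbed semigroup, whereas the paper leaves it on the right as part of the source in the map $\Phi$ (Lemma~\ref{lemMap}); either way the smallness of $\delta$ makes it harmless.

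The stability argument, however, takes a genuinely different route. You subtract $g^{*}$ directly, write the Duhamel formula for $w=g-g^{*}$, and close in the weighted-in-time norm using Lemmas~\ref{lemS} and~\ref{lemS*} together with the smallness of $\tri g\tri_{0,0,\beta}+\tri g^{*}\tri_{-\infty,0,\beta}$. The paper instead never forms this difference: it uses the time-translates $g_{k}(t):=g(t+kT^{*})$ of the \emph{same} solution $g$, shows via Lemma~\ref{apes} that $\{g_{k}\}$ is Cauchy in $X(-kT^{*},\kappa/2,\beta)$, identifies the limit with $g^{*}$ (by construction and by the uniqueness in Proposition~\ref{tps1}), and then reads off stability from \eqref{exiapes0} with $k=0$ followed by $l\to\infty$. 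Your approach is the more direct one once $g^{*}$ is already in hand (as it is, by Theorem~\ref{th4}); the paper's translation argument has the virtue of simultaneously constructing $g^{*}$ and proving convergence to it, so Lemma~\ref{apes} does double duty. One small correction: the convolution estimate \eqref{S*_beta} of Lemma~\ref{lemS*} only closes at rate $\kappa/2$, not $\kappa$, so your closing norm should be $\tri w\tri_{0,\kappa/2,\beta}$ and the final decay is $e^{-\kappa t/2}$, matching the statement rather than improving it.
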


\section{Preliminaries}\label{sec3}

This section provides the properties of $L$, $\Gamma$, and $H$.
These are summarized in the following proposition, where
$P$ denotes the orthogonal projection onto the null space $N \subset L^{2}_{\xi}$ of $L$. 
The null space $N$ is represented as 
\begin{equation*}
N = {\rm span} \{W^{0}, \ W^{0}\xi_{1}, \ W^{0}\xi_{2}, \ W^{0}\xi_{3}, \ W^{0}|\xi|^{2}\}.
\end{equation*}
For more details, see (1.5) and (1.19) in \cite{UYY03}.

\begin{proposition}\label{LGamma}
The following hold with some positive constants
$\nu_{0}$, $\nu_{1}$, $\nu_{2}$, $k_{0}$, $k_{1}$, $k_{2}$, $k_{3}$, $k_{4}$, $k_{5}$, and $l_{0}$.
Here $l_{0}$ depends only on $\sigma_{0}$, $\rho_{\infty}$, $u_{\infty}$, $T_{\infty}$, and $\beta$.
The other constants depend only on $\sigma_{0}$, $\rho_{\infty}$, $u_{\infty}$, and $T_{\infty}$.
\begin{enumerate}[(i)]
\item $L$ has the decomposition
\begin{equation*}
Lf=-\nu(\xi) f +Kf, \quad Kf:=\int_{{\mathbb R}^{3}} K(\xi,\xi') h(\xi') d\xi',
\end{equation*}
where $\nu$ is a positive function with
\begin{equation*}
\nu_{0} \langle \xi \rangle \leq \nu(\xi) \leq \nu_{0}^{-1} \langle \xi \rangle , 
\end{equation*}
and the kernel $K(\xi,\xi')$ satisfies
\begin{equation*}
|K(\xi,\xi')| \leq k_{0}(|\xi-\xi'|+|\xi-\xi'|^{-1}) e^{-k_{1}|\xi-\xi'|}.
\end{equation*}
\item $L$ is non-positive self-adjoint on $L^{2}_{\xi}$ with 
\begin{equation*}
\int_{\mathbb R^{3}} f Lf d\xi \leq -\nu_{1} \| \langle \xi \rangle^{1/2} (I - P) f \|_{L^{2}_{\xi}}^{2}
 \quad \text{for $f \in L^{2}_{\xi}$}.
\end{equation*}
\item $K$ has the regularizing properties, that is, the following maps are bounded.
\begin{equation*}
K: L^{\infty}_{\xi,\beta} \to L^{\infty}_{\xi,\beta+1}, \quad 
K: L^{2}_{\xi} \to L^{2}_{\xi} \cap L^{\infty}_{\xi}.
\end{equation*}
 \item The bilinear operator $\Gamma(f,g)$ satisfies $\Gamma(f,g)=\Gamma(g,f)$,
\begin{align*}
\|\nu^{-1} \Gamma (f,g) \|_{L^{\infty}_{\xi,\beta}} 
&\leq k_{3} \|f\|_{L^{\infty}_{\xi,\beta}}\|g\|_{L^{\infty}_{\xi,\beta}} \quad \text{for $\beta>0$}, \quad
\\
\|\Gamma(f,g)\| 
&\leq l_{0} (\|f\|_{\beta}\|g\| +\|f\| \|g\|_{\beta})
\quad \text{for $\beta > 7/2$},
\\
\|\Gamma(f,g)h\|_{L^{1} } 
&\leq k_{4} \|\langle \xi \rangle^{1/2}f\|_{L^{\infty}_{x}L^{2}_{\xi}}\|\langle \xi \rangle^{1/2}g\| \|\langle \xi \rangle^{1/2}h\| 
\end{align*}
for every functions $f$, $g$, and $h$ with the relevant norm bounded.
\item Under condition \eqref{super1}, $A=P\xi_{1}P$ is negative definite on the null space $N \subset L^{2}_{\xi}$, that is, 
\begin{equation*}
(A f,f)_{L^{2}_{\xi}} \leq -\nu_{2} \|f\|_{L^{2}_{\xi}} \quad \text{for $f \in N$}.
\end{equation*}
\item $P\xi_{1}$ is bounded from $L^2_\xi$ to $L^2_\xi$, that is, 
\begin{equation*}
\Vert P\xi_1 f\Vert_{L^2_\xi}\le k_{5} \Vert  f\Vert_{L^2_\xi}  \quad \text{for $f \in L^{2}_{\xi}$}.
\end{equation*}
\end{enumerate}
\end{proposition}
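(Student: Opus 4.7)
The plan is to treat Proposition \ref{LGamma} as a compendium of classical Grad-type estimates adapted to the weight $W_0$, and to verify each item by recalling the relevant technical machinery from \cite{UYY03} and the Boltzmann literature, pointing out only the new computations forced by the five-dimensional hydrodynamic null space and the Mach-number hypothesis $\mathcal{M}_\infty<-1$.

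\textbf{Items (i)--(iii): the splitting $L=-\nu+K$ and regularization.} First I would insert $W_0 f$ into the definition of $Q$ and reorganize the four terms into a multiplicative part and an integral part. The gain term and the $M_\infty$-factor in the loss term give an integral operator $K$; the remaining loss term gives the multiplier $\nu(\xi) = \sigma_0 \int_{\mathbb R^3} |(\xi-\xi_*)|\,M_\infty(\xi_*)\,d\xi_*$ (after integration over $\omega\in S^2$). A direct computation using the explicit Maxwellian shows $\nu(\xi)\asymp\langle\xi\rangle$, yielding (i). The kernel representation $K(\xi,\xi')$ is obtained by Grad's change of variables $(\xi_*,\omega)\mapsto\xi'$, and the pointwise bound with the Gaussian tail $e^{-k_1|\xi-\xi'|}$ follows from estimating the resulting Gaussian integral — this is the computation worked out, e.g., in Lemma~3.1 of \cite{UYY03}. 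The boundedness $K:L^\infty_{\xi,\beta}\to L^\infty_{\xi,\beta+1}$ in (iii) is then a direct consequence of the pointwise kernel estimate together with the elementary bound $\langle\xi\rangle\int|K(\xi,\xi')|\langle\xi'\rangle^{-\beta}\,d\xi'\lesssim\langle\xi\rangle^{-\beta}$; the $L^2_\xi\to L^2_\xi\cap L^\infty_\xi$ bound follows by Schur's test and Cauchy--Schwarz against the Gaussian factor.

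\textbf{Items (ii) and (iv): coercivity and bilinear bounds.} For (ii), I would invoke the classical Hilbert spectral-gap theorem for hard spheres: $L$ is symmetric on $L^2_\xi$ with kernel $N$ spanned by the five collision invariants multiplied by $W_0$, and the Dirichlet form is equivalent to $\|\langle\xi\rangle^{1/2}(I-P)f\|_{L^2_\xi}^2$ on the orthogonal complement. The self-adjointness of $L$ follows from the symmetry of the collision kernel under $(\xi,\xi_*)\leftrightarrow(\xi',\xi'_*)$. For (iv), the commutativity $\Gamma(f,g)=\Gamma(g,f)$ is immediate from the same symmetry. The three inequalities are standard Grad estimates: the $L^\infty_{\xi,\beta}$ bound uses the pointwise control $|\Gamma(f,g)|(\xi)\le C\nu(\xi)\|f\|_{\xi,\beta}\|g\|_{\xi,\beta}\langle\xi\rangle^{-\beta}$; the mixed $L^\infty_{\xi,\beta}/L^2_\xi$ bound comes from interpolating the gain term with Young's inequality (the constraint $\beta>7/2$ enters when absorbing $\langle\xi_*\rangle$-factors from the quadratic kernel into the weight); the trilinear $L^1$-bound follows by Cauchy--Schwarz after distributing $\langle\xi\rangle^{1/2}$ across the three factors, which is carried out in \cite{UYY03}.

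\textbf{Items (v)--(vi): hydrodynamic estimates.} Item (vi) is the easiest: on the explicit basis $\{W^0,W^0\xi_i,W^0|\xi|^2\}$ of $N$, multiplication by $\xi_1$ produces a function whose $L^2_\xi$-norm is controlled by the Maxwellian moments, and since $P$ is an $L^2_\xi$-orthogonal projection onto a five-dimensional subspace, $\|P\xi_1 f\|_{L^2_\xi}\le\|\xi_1\|_{\text{op on }N}\|f\|_{L^2_\xi}$ gives the bound with an explicit $k_5$. The main obstacle is (v): one must compute the symmetric $5\times 5$ matrix representing $A=P\xi_1P$ in the basis above and show that under $u_{\infty,2}=u_{\infty,3}=0$ and $\mathcal M_\infty<-1$ it is negative definite. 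The off-diagonal entries involving $\xi_2,\xi_3$ vanish by parity, so $A$ block-decomposes, reducing the problem to a $3\times 3$ block (for the $\rho,u_1,T$-modes) and a diagonal $2\times 2$ block (for $u_2,u_3$). The latter is $-|u_{\infty,1}|I$, trivially negative. For the $3\times 3$ block, the eigenvalues are $u_{\infty,1}$ and $u_{\infty,1}\pm\sqrt{5T_\infty/3}$ (the characteristic speeds of the linearized Euler system); these are all strictly negative precisely when $\mathcal M_\infty<-1$. I expect this eigenvalue computation — matching the hydrodynamic sound speed — to be the genuinely substantive step, while every other part of the proposition reduces to quoting the Grad/UYY machinery.
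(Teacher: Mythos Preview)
Your overall framing is sound, but you have the emphasis exactly reversed relative to the paper. The paper cites \cite{UYY04} (Proposition~2.1) for (i)--(iii) and the first inequality in (iv), cites \cite{UYY03} directly for (v), and dismisses (vi) in one line just as you do. So your detailed eigenvalue computation for (v) is correct but unnecessary here --- it is precisely what \cite{UYY03} already does after their (2.5). The parts the paper actually proves from scratch are the \emph{second and third} inequalities in (iv), which you wave away as ``standard Grad estimates'' and claim are ``carried out in \cite{UYY03}''. They are not: the paper explicitly singles them out as what is left to show.

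Your sketch for the second inequality (``interpolating the gain term with Young's inequality'') is too vague and does not match the actual mechanism. The paper treats loss and gain separately. For the loss term a single Schwarz in $\xi_*$ suffices and needs only $\beta>5/2$. For the gain term the paper squares via Schwarz, uses the algebraic inequality $\langle\xi\rangle^{2}\le\langle\xi'\rangle^{2}\langle\xi_*'\rangle^{2}$ to transfer the $\langle\xi\rangle$-weight onto the post-collisional variables, and then applies the change of variables $(\xi,\xi_*,\omega)\mapsto(\xi',\xi_*',-\omega)$ to close the $L^2_x$-integral; it is exactly this weight transfer that forces $\beta-2>3/2$, i.e.\ $\beta>7/2$. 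The third inequality is handled the same way: Schwarz on the gain term gives $\Gamma_{\text{gain}}^2(x,\xi)\lesssim\langle\xi\rangle\int|f(\xi')g(\xi_*')|^2\langle\xi'\rangle\langle\xi_*'\rangle\,d\xi_*d\omega$, and after the same change of variables one obtains the $L^\infty_xL^2_\xi$ factor on $f$. Your phrase ``distributing $\langle\xi\rangle^{1/2}$ across the three factors'' gestures at the right idea but misses that the substantive step is again the pre/post-collisional change of variables, not merely Cauchy--Schwarz.
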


\begin{proof}
The property (i) except the lower bound of $\nu$ is shown in Proposition 2.1 in \cite{UYY04}. 
From the definition of $\nu$ in the proof of Proposition 2.1 in \cite{UYY04} 
and the inequality $c \langle \xi \rangle \leq \nu^{0}(\xi) \leq C \langle \xi \rangle $ shown in Chapter 3 of \cite{glassey}, 
one can also obtain the lower bound {{in (i)}}.

Proposition 2.1 in \cite{UYY04} also provides properties (ii)--(iv) except the second and third inequality in (iv). 
(See also Chapter 3 in \cite{glassey}.) Property (v) is proved just after (2.5) in \cite{UYY03}.
Furthermore, property (vi) follows from the fact that
$P$ is a projection to a space spanned by the Maxwellian multiplied with polynomials.

What is left is to show the second and third inequalities {{in}} (iv). Set $M(\xi)=M[1,u_{\infty},T_{\infty}](\xi)$.
{{Firstly}} let us show the second one.
A typical term of the loss terms of $\Gamma$ is estimated by the Schwarz inequality as
\begin{align}
\vert \Gamma_{loss}(x,\xi)\vert&:= \Big\vert \int_{\mathbb{R}^3}\!\!\int_{\bm{S}^{2}}  |(\xi-\xi_*)\cdot \omega \vert  f(x,\xi) {M}^{1/2}(\xi_{*})  g(x,\xi_{*}) d\xi_{*} d\omega \Big\vert
\notag\\
&\lesssim \langle \xi \rangle |f(x,\xi)| \int_{\mathbb{R}^3}\!\!\int_{\bm{S}^{2}}  |\xi_*| {M}^{1/2}(\xi_{*})|g(x,\xi_{*})| d\xi_{*} d\omega
\label{Gamma1}\\
&\lesssim \langle \xi \rangle^{1-\beta} \|f\|_{\beta} \|g(x)\|_{L^{2}_{\xi}}.
\notag
\end{align}
For $\beta > 5/2$,  we have
\begin{equation*}
\|\Gamma_{loss}\|^{2} \lesssim \|f\|_{\beta}^{2} \|g\|^{2}.
\end{equation*}
A typical part of the gain terms is handled by the Schwarz inequality as
\begin{align}
\Gamma_{gain}^{2}(x,\xi) & := \left({M}^{-1/2}(\xi)
\int_{\mathbb{R}^3}\!\!\int_{\bm{S}^{2}}  |(\xi-\xi_*)\cdot \omega \vert 
{M}^{1/2}(\xi')  f(x,\xi') {M}^{1/2}(\xi_{*}')  g(x,\xi_{*}') d\xi_{*} d\omega \right)^{2}
\notag \\
& \lesssim \left( \int_{\mathbb{R}^3}\!\!\int_{\bm{S}^{2}}  |(\xi-\xi_*)\cdot \omega \vert 
{M}^{1/2}(\xi_{*})  |f(x,\xi') g(x,\xi_{*}')| d\xi_{*} d\omega \right)^{2}
\notag \\
& \lesssim \left(\int_{\mathbb{R}^3}\!\!\int_{\bm{S}^{2}}  |(\xi-\xi_*)\cdot \omega \vert {M}^{1/2}(\xi_{*})  d\xi_{*} d\omega \right)
\notag \\
& \qquad \times \left(\int_{\mathbb{R}^3}\!\!\int_{\bm{S}^{2}}  |(\xi-\xi_*)\cdot \omega \vert {M}^{1/2}(\xi_{*})  f^{2}(x,\xi') g^{2}(x,\xi_{*}') d\xi_{*} d\omega \right)
\notag \\
& \lesssim \langle \xi \rangle^{2} \int_{\mathbb{R}^3}\!\!\int_{\bm{S}^{2}} f^{2}(x,\xi') g^{2}(x,\xi_{*}') d\xi_{*} d\omega
\label{Gamma2} \\
& \lesssim \langle \xi \rangle^{2} \int_{\mathbb{R}^3}\!\!\int_{\bm{S}^{2}} |f(x,\xi') g(x,\xi_{*}')| \|f\|_{\beta}\|g\|_{\beta} \langle \xi' \rangle^{-\beta}\langle \xi_{*}' \rangle^{-\beta} d\xi_{*} d\omega
\notag \\
& \lesssim \|f\|_{\beta}\|g\|_{\beta} \int_{\mathbb{R}^3}\!\!\int_{\bm{S}^{2}} |f(x,\xi') g(x,\xi_{*}')| \langle \xi' \rangle^{2-\beta}\langle \xi_{*}' \rangle^{2-\beta}  d\xi_{*} d\omega,
\notag
\end{align}
where we have used $\langle \xi \rangle^{2} \leq \langle \xi' \rangle^2\langle \xi_{*}' \rangle^{2}$ in deriving the last inequality.
For $\beta > 7/2$, we have
\begin{align*}
\|\Gamma_{gain}\|^{2}
&\lesssim \|f\|_{\beta}\|g\|_{\beta} \int_{\mathbb R^{3}_{+}} \left( \int_{\mathbb R^{3}}  \int_{\mathbb{R}^3}\!\!\int_{\bm{S}^{2}} |f(x,\xi') g(x,\xi_{*}')|  \langle \xi' \rangle^{2-\beta}\langle \xi_{*}' \rangle^{2-\beta} d\xi_{*} d\omega d\xi  \right) dx
\\
&= \|f\|_{\beta}\|g\|_{\beta} \int_{\mathbb R^{3}_{+}} \left( \int_{\mathbb R^{3}}  \int_{\mathbb{R}^3}\!\!\int_{\bm{S}^{2}} |f(x,\xi) g(x,\xi_{*})|  \langle \xi \rangle^{2-\beta}\langle \xi_{*} \rangle^{2-\beta}
d\xi_{*} d\omega d\xi  \right) dx
\\
& \lesssim  \|f\|_{\beta}\|g\|_{\beta} \int_{\mathbb R^{3}_{+}}  \|f(x)\|_{L^{2}_{\xi}} \|g(x)\|_{L^{2}_{\xi}} dx
\\
& \lesssim  \|f\|_{\beta}\|g\|_{\beta} \|f\| \|g\| ,
\end{align*}
where we have used the change of variables $(\xi,\xi_{*},\omega) \to (\xi',\xi_{*}',-\omega)$ in deriving the equality;
we have used the Schwarz inequality in deriving the last two inequalities.
Combining the estimates above leads to the second inequality in (iv).

Next let us show the third inequality in (iv).
Using \eqref{Gamma1}, we estimate the loss term of $\Gamma$ as
\begin{align*}
\|\Gamma_{loss} h\|_{L^{1}} 
&\lesssim  \int_{\mathbb R^{3}_{+}}  \|\langle \xi \rangle^{1/2}f(x)\|_{L^{2}_{\xi}} \|g(x)\|_{L^{2}_{\xi}} \|\langle \xi \rangle^{1/2}h(x)\|_{L^{2}_{\xi}} dx
\\
& 
\lesssim
\|\langle \xi \rangle^{1/2}f\|_{L^{\infty}_{x}L^{2}_{\xi}}\|g\| \|\langle \xi \rangle^{1/2}h\| .
\end{align*}
By \eqref{Gamma2} and $\langle \xi \rangle^{2} \leq \langle \xi' \rangle^2\langle \xi_{*}' \rangle^{2}$, 
the gain term is handled as
\begin{align*}
\Gamma_{gain}^{2}(x,\xi) 
& \lesssim \langle \xi \rangle \int_{\mathbb{R}^3}\!\!\int_{\bm{S}^{2}} |f(x,\xi') g(x,\xi_{*}')|^{2} \langle \xi' \rangle \langle \xi_{*}' \rangle  d\xi_{*} d\omega.
\end{align*}
Then we have
\begin{align*}
\|\Gamma_{gain}h\|_{L^{1}}^{2}
&\lesssim \| \langle \xi \rangle^{-1/2} \Gamma_{gain} \|^{2} \|\langle \xi \rangle^{1/2}h\|^{2}
\\
&\lesssim  \int_{\mathbb R^{3}_{+}} \left( \int_{\mathbb R^{3}}  \int_{\mathbb{R}^3}\!\!\int_{\bm{S}^{2}} |f(x,\xi') g(x,\xi_{*}')|^{2} \langle \xi' \rangle \langle \xi_{*}' \rangle d\xi_{*} d\omega d\xi  \right) dx \|\langle \xi \rangle^{1/2}h\|^{2}
\\
&= \int_{\mathbb R^{3}_{+}} \left( \int_{\mathbb R^{3}}  \int_{\mathbb{R}^3}\!\!\int_{\bm{S}^{2}} |f(x,\xi) g(x,\xi_{*})|^{2} \langle \xi \rangle \langle \xi_{*} \rangle d\xi_{*} d\omega d\xi  \right) dx \|\langle \xi \rangle^{1/2}h\|^{2} 
\\
&\lesssim \int_{\mathbb R^{3}_{+}}  \|\langle \xi \rangle^{1/2}f(x)\|_{L^{2}_{\xi}}^{2} \|\langle \xi \rangle^{1/2}g(x)\|_{L^{2}_{\xi}}^{2} dx  \|\langle \xi \rangle^{1/2}h\|^{2}
\\
& \lesssim \|\langle \xi \rangle^{1/2}f\|_{L^{\infty}_{x}L^{2}_{\xi}}^{2} \|\langle \xi \rangle^{1/2}g\|^{2} \|\langle \xi \rangle^{1/2}h\|^{2},
\end{align*}
where we have used the change of variables $(\xi,\xi_{*},\omega) \to (\xi',\xi_{*}',-\omega)$ in deriving the equality.
Combining the estimates above leads to the third inequality in (iv).
\end{proof}


{{In the next lemma, we give some estimates on the kernel $K(\xi,\xi')$.}}

\begin{lemma}\label{K1}
There hold that
\begin{gather}
\sup_{\xi\in \mathbb{R}^3}\int_{\mathbb{R}^3} \vert K(\xi, \xi')\vert^\alpha d\xi'<C(\alpha) \quad \text{for $0<\alpha<3$},
\label{K2}
\\
\int_{\mathbb{R}^3}\Big(\int_{\mathbb{R}^3} \vert K(\xi, \xi')\vert^\alpha d\xi'\Big)^\beta d\xi<C({\alpha,\beta})
\quad \text{for $0<\alpha<3$, $\beta>3$}.
\label{K3}
\end{gather}
Furthermore, there exists a pair of $p \in (3,4)$ and $q \in (1,3)$ 
such that $p<2q$ and
\begin{align}\label{K4}
A_{p',q'}:=\Big(\int_{\mathbb{R}^3} \Big(\int_{\mathbb{R}^3} \vert K(\xi, \xi')\vert^{p'/2} d\xi'\Big)^{2q'/p'} d\xi\Big)^{1/q'}<+\infty,
\end{align}
where $p'$ and $q'$ are the H\"{o}lder conjugates of $p$ and $q$, respectively.
\end{lemma}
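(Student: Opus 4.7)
My plan has three parts, one per estimate: for (\ref{K2}) a translation change of variables reduces things to a one-variable integral; for (\ref{K3}) one needs genuine decay of the inner integral in $|\xi|$, which goes beyond the bound recorded in Proposition~\ref{LGamma}(i) and is the main obstacle; and (\ref{K4}) is a direct specialization of (\ref{K3}) with carefully chosen exponents.

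For (\ref{K2}), I substitute $\eta=\xi'-\xi$ so that the inner integral becomes
$$
\int_{\mathbb{R}^3} k_0^{\alpha}\bigl(|\eta|+|\eta|^{-1}\bigr)^{\alpha} e^{-\alpha k_1|\eta|}\,d\eta,
$$
which is independent of $\xi$. Splitting into $|\eta|\le 1$ and $|\eta|>1$ and passing to spherical coordinates, the near-origin piece is $\lesssim\int_0^1 r^{2-\alpha}\,dr$, finite exactly when $\alpha<3$, while the tail is finite by the exponential. This gives the $\xi$-uniform bound.

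For (\ref{K3}) the obstruction is that the bound in Proposition~\ref{LGamma}(i) is purely translation invariant, so the inner integral $I(\xi):=\int|K(\xi,\xi')|^{\alpha}d\xi'$ is $\xi$-uniform but has no decay as $|\xi|\to\infty$, and the outer integral cannot converge from that estimate alone. The standard remedy for hard spheres is to invoke the refined Grad-type bound
$$
|K(\xi,\xi')|\lesssim\bigl(|\xi-\xi'|+|\xi-\xi'|^{-1}\bigr)\exp\Bigl(-c|\xi-\xi'|^{2}-c\,\tfrac{(|\xi|^{2}-|\xi'|^{2})^{2}}{|\xi-\xi'|^{2}}\Bigr),
$$
which contributes genuine damping in $|\xi|$ through the second exponent. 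Writing $\eta=\xi'-\xi=\eta_{\parallel}\hat\xi+\eta_{\perp}$ gives $|\xi|^{2}-|\xi'|^{2}=-2|\xi|\eta_{\parallel}-|\eta|^{2}$, so the extra factor is $\exp(-c|\xi|^{2}\eta_{\parallel}^{2}/|\eta|^{2})$, and integrating first in $\eta_{\parallel}$ at fixed $\eta_{\perp}$ extracts a Gaussian factor of width $\sim|\eta_{\perp}|/|\xi|$. Combined with the near-origin analysis used for (\ref{K2}), this produces polynomial decay $I(\xi)\lesssim\langle\xi\rangle^{-s}$ with $s$ large enough that $\int_{\mathbb{R}^{3}}I(\xi)^{\beta}\,d\xi$ converges for every $\beta>3$. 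The delicate point is tracking the interplay between the singular behavior at $\eta=0$ (which forces $\alpha<3$) and the Gaussian decay in $\eta_{\parallel}$ (which gives the rate $s$); this is the step I expect to require the most care.

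Finally, (\ref{K4}) is a direct instance of (\ref{K3}) with $\alpha=p'/2$ and $\beta=2q'/p'$, provided one can choose $p\in(3,4)$ and $q\in(1,3)$ with $p<2q$, $\alpha\in(0,3)$, and $\beta>3$. Taking $p$ slightly above $3$ and $q$ slightly above $3/2$ satisfies $p<2q$, and pushes the H\"older conjugates $p'$ and $q'$ toward $3/2$ and $3$ respectively, so $\alpha$ approaches $3/4$ and $\beta$ approaches $4$. Concretely, $p=3.1$, $q=1.6$ gives $\alpha\approx 0.74\in(0,3)$ and $\beta\approx 3.6>3$, so (\ref{K3}) applies and $A_{p',q'}<\infty$.
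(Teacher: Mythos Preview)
Your plan is correct and follows the same architecture as the paper: establish a bound on the inner integral, deduce (\ref{K2}) and (\ref{K3}), then check that suitable $p,q$ exist for (\ref{K4}).

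The one point worth flagging is (\ref{K3}). The paper's own proof is extremely terse: it asserts that
\[
\int_{\mathbb{R}^3}|K(\xi,\xi')|^{\alpha}\,d\xi'\le C(\alpha)\langle\xi\rangle^{-1}
\]
follows ``straightforward from Proposition~\ref{LGamma}(i)'', and then (\ref{K3}) is immediate since $\int\langle\xi\rangle^{-\beta}\,d\xi<\infty$ for $\beta>3$. You are right that the pointwise bound recorded in Proposition~\ref{LGamma}(i), namely $|K(\xi,\xi')|\le k_0(|\xi-\xi'|+|\xi-\xi'|^{-1})e^{-k_1|\xi-\xi'|}$, is purely translation invariant and by itself yields no decay of the inner integral in $|\xi|$. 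What the paper is tacitly using is the sharper Grad estimate (with the extra factor $\exp\bigl(-c\,(|\xi|^2-|\xi'|^2)^2/|\xi-\xi'|^2\bigr)$), which is the standard form in the references cited for Proposition~\ref{LGamma} and does give the $\langle\xi\rangle^{-1}$ decay. Your plan makes this explicit and is the honest way to fill the gap; the decomposition $\eta=\eta_{\parallel}\hat\xi+\eta_{\perp}$ and Gaussian integration in $\eta_{\parallel}$ indeed produce a factor $\sim|\xi|^{-1}$, so you recover exactly the decay rate $s=1$ the paper uses. For (\ref{K4}) your reduction to (\ref{K3}) with $\alpha=p'/2$, $\beta=2q'/p'$ and the concrete choice $p=3.1$, $q=1.6$ matches the paper's verification of the system $3<p<4$, $1<q<3$, $p<2q$, $pq<3p-2q$.
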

\begin{proof}
It is straightforward to see from Proposition \ref{LGamma} (i) that
\begin{align*}
\int_{\mathbb{R}^3} \vert K(\xi, \xi')\vert^\alpha d\xi'\le C(\alpha) \langle \xi\rangle^{-1}.
\end{align*}
This immediately yields \eqref{K2} and \eqref{K3}.

By \eqref{K2} and \eqref{K3}, the conditions $0<\frac{p'}{2}<3$ and $3<\frac{2q'}{p'}$ ensure $A_{p',q'} < +\infty$.
Therefore, it suffices to find a pair $p$ and $q$ with
\begin{align}
\begin{cases}
3<p<4,\\
1<q<3, \\
p<2q,\\
0<\frac{p'}{2}<3, \\ 
3<\frac{2q'}{p'}. \\ 
\end{cases}
\Longleftrightarrow
\quad
\begin{cases}
3<p<4,\\
1<q<3,\\
p<2q,\\
pq<3p-2q.
\end{cases}
\Longleftrightarrow
\quad
\begin{cases}
\frac14<\frac1p<\frac13,\\
\frac13<\frac1q<1,\\
\frac1q<\frac2p,\\
1<\frac3q-\frac2p.
\end{cases}
\label{pq}
\end{align}
It is easy to find some pairs of $p$ and $q$ satisfying the rightmost in \eqref{pq}.
The proof is complete.
\end{proof}

Using \eqref{ExBdry0}, \eqref{tExBdry0}, and Propositions \ref{ex-st} and \ref{LGamma}, 
we have the following lemma. We omit the proof since it is straightforward.

\begin{lemma} \label{L:H1}
The quantity $H$ defined in \eqref{defH0} can be estimated as \eqref{H1}.
\end{lemma}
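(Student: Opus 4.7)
The plan is to bound each of the six terms in the definition \eqref{defH0} of $H$ separately and in both the $L^2$ and $L^\infty_{\beta-1}$ norms (the two pieces of $[[\,\cdot\,]]_{\beta-1}$), then absorb the extra weight $e^{\sigma x_1}$ using the two distinct mechanisms that are available. For the terms that depend linearly or bilinearly on $\tilde{f}$ (namely $\Gamma(\varphi_R^{1/2}\tilde f)$, the tangential commutator $-\sum_{i=2}^3 \xi_i (\partial_{x_i}\varphi_R)\tilde f$, and the $\varphi_R\tilde f$ contributions inside $\Gamma(\varphi_R\tilde f+U)$), Proposition~\ref{ex-st} supplies the pointwise bound $|\tilde f(x_1,\xi)|\le\tilde\delta M_0 e^{-\gamma_0 x_1}\langle\xi\rangle^{-\beta}$, and since $\sigma\le\gamma_0/2$ the factor $e^{\sigma x_1}$ is absorbed leaving $e^{-\sigma x_1}$, which is integrable in $x_1$ on $\mathbb R_+$. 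For the terms that involve $U$ and $F_b$ (namely $LU$, $-F_b\xi_1\partial_{x_1}\varphi_1$, the $U$ contributions inside $\Gamma$, and in the time-periodic case $\partial_t U+\xi\cdot\nabla U$), the factor $e^{\sigma x_1}$ is controlled by the compact support of $\varphi$ in $x_1\in[0,2]$ and by \eqref{ExBdry2}/\eqref{tExBdry2}, so $e^{\sigma x_1}$ is uniformly bounded on the support.

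First I would dispatch the linear pieces. The bound $\|e^{\sigma x_1}(-\sum_i\xi_i(\partial_{x_i}\varphi_R)\tilde f)\|_{\beta-1}\lesssim\tilde\delta$ follows by moving the $\xi_i$-factor into the weight (one power of $\langle\xi\rangle$ is absorbed by replacing $\beta$ by $\beta-1$), and the $L^2$ norm follows from integrating $e^{-\sigma x_1}\langle\xi\rangle^{2-2\beta}$ over $\mathbb R_+^3\times\mathbb R^3$ against the bounded factor $|\partial_{x_i}\varphi_R|$; this is finite for $\beta>5/2$. For $-F_b\xi_1\partial_{x_1}\varphi_1$, the $L^2$ estimate uses $\|\langle\xi\rangle F_b\|\lesssim\delta^s$ from \eqref{ExBdry3} and boundedness of $\partial_{x_1}\varphi_1$ on its compact support; the weighted $L^\infty$ norm comes from $\|F_b\|_\beta\lesssim\delta^s$ together with the factor $\langle\xi\rangle^{\beta-1}|\xi_1|\le\langle\xi\rangle^\beta$. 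For $LU$ I would use the splitting $L=-\nu+K$ from Proposition~\ref{LGamma}(i),(iii): $\|\nu U\|\lesssim\|\langle\xi\rangle U\|\lesssim\delta$ and $\|\nu U\|_{\beta-1}\lesssim\|U\|_\beta$, while the regularizing estimate for $K$ gives $\|KU\|_{\beta-1}\le\|KU\|_{\beta+1}\lesssim\|U\|_\beta$, and $\|KU\|\lesssim\|U\|$ by the $L^2_\xi\to L^2_\xi$ boundedness. In the time-periodic case the additional terms $\partial_t U$ and $\xi\cdot\nabla U$ are handled exactly by \eqref{tExBdry3}, noting that $\|\langle\xi\rangle\nabla_x U\|\lesssim\delta^*$ absorbs the $\xi$-loss.

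Next I would handle the nonlinear $\Gamma$-terms. Expanding $\Gamma(\varphi_R\tilde f+U)=\Gamma(\varphi_R\tilde f)+2\Gamma(\varphi_R\tilde f,U)+\Gamma(U,U)$ (plus the $\Gamma(\varphi_R^{1/2}\tilde f)$ term), one applies the second bound in Proposition~\ref{LGamma}(iv), $\|\Gamma(f,g)\|\lesssim\|f\|_\beta\|g\|+\|f\|\|g\|_\beta$, valid for $\beta>7/2$, with $(f,g)$ chosen among $\{\varphi_R\tilde f,\,\varphi_R^{1/2}\tilde f,\,U\}$. The products of norms are bounded by $(\tilde\delta+\delta)\cdot\delta$, hence by $\delta^2\lesssim\delta$ in the small-data regime. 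The $L^\infty_{\beta-1}$ bound on $\Gamma$-terms uses the first estimate in (iv), $\|\nu^{-1}\Gamma(f,g)\|_{L^\infty_{\xi,\beta}}\lesssim\|f\|_{L^\infty_{\xi,\beta}}\|g\|_{L^\infty_{\xi,\beta}}$, combined with the lower bound $\nu(\xi)\ge\nu_0\langle\xi\rangle$ to transfer the $\nu^{-1}$ into gain of one $\langle\xi\rangle$ — this is precisely why the weight on $H$ is $\beta-1$ rather than $\beta$. Multiplication by $e^{\sigma x_1}$ is absorbed using, for the $\tilde f$-factors, the $e^{-\gamma_0 x_1}$ decay, and for the $U$-factors, the compact $x_1$-support.

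The main obstacle, though a modest one, is the loss-of-one-power in $\xi$ that arises in $LU$ and $\xi\cdot\nabla U$: these naturally live in a space with weight $\beta-1$ rather than $\beta$, which is the technical reason the statement is phrased with $[[\,\cdot\,]]_{\beta-1}$. Once one recognizes that the estimates on $U$ in \eqref{ExBdry3} and \eqref{tExBdry3} provide precisely one extra $\langle\xi\rangle$ exactly where needed (on $\langle\xi\rangle U$, $\partial_t U$, $\langle\xi\rangle\nabla_x U$), every term in $H$ is controlled by $\delta^s$ or $\delta^*$, and summation gives \eqref{H1}.
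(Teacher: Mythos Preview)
Your proposal is correct and is exactly the kind of term-by-term check the authors have in mind; the paper itself omits the proof, stating only that it is straightforward from \eqref{ExBdry0}, \eqref{tExBdry0}, and Propositions~\ref{ex-st} and~\ref{LGamma}. Your identification of the two absorption mechanisms for $e^{\sigma x_1}$ (the $e^{-\gamma_0 x_1}$ decay of $\tilde f$ with $\sigma\le\gamma_0/2$, versus the compact $x_1$-support of $U$ and $\partial_{x_1}\varphi$), the use of the first and second inequalities in Proposition~\ref{LGamma}(iv) for the $L^\infty_{\beta-1}$ and $L^2$ bounds of the $\Gamma$-terms respectively, and the observation that the one-$\langle\xi\rangle$ loss in $\nu U$, $\xi\cdot\nabla U$, and $\Gamma$ is precisely why the norm is $[[\,\cdot\,]]_{\beta-1}$, are all on target.
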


\section{Linearized problem}\label{sec4}
We consider the following linearized problem:
\begin{subequations}\label{le0}
\begin{gather}
\partial_{t}h + \xi \cdot \nabla_{x} h - \sigma \xi_{1} h - Lh  = 0,
\label{le1} \\
h(0,x,\xi) = h_{0}(x,\xi), \quad x \in {\mathbb R}^{3}_{+} , \   \xi \in \mathbb R^{3},
\label{lic1}\\
h(t,0,x',\xi) = 0, \quad t>0, \  x' \in \mathbb R^{2}, \ \xi_{1} >0,
\label{lbc1}\\
\lim_{x_{1}\to\infty} h (t,x_{1},x',\xi) \to  0 \ \ (x_{1} \to \infty),  \quad t>0, \ x' \in \mathbb R^{2}, \ \xi \in \mathbb R^{3}.
\label{lbc2}
\end{gather}
\end{subequations}
We denote the solution operator of this problem by $S(t)$. 
For the solvability of  the linearized problem, the following lemma holds by the standard method:

\begin{lemma}
For any $T>0$ and $h_{0} \in L^{2}$,  
problem \eqref{le0} has a unique solution $h \in C([0,T];L^{2})$.
Furthermore, the solution operator $S(t)$ is a $C_{0}$-semigroup  on $L^{2}$.
\end{lemma}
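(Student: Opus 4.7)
The plan is to write $L = -\nu(\xi) + K$ using Proposition~\ref{LGamma}(i) and decompose the generator as $-\xi\cdot\nabla_x + \sigma\xi_1 + L = A_0 + K$ with $A_0 := -\xi\cdot\nabla_x + \sigma\xi_1 - \nu(\xi)$. I would build the semigroup of $A_0$ by characteristics and then add $K$ as a bounded perturbation, which is the ``standard method'' the authors allude to.

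For the first step take the domain
\begin{equation*}
D(A_0) := \{\, h \in L^2 :\, \xi\cdot\nabla_x h \in L^2,\ h(0,x',\xi)=0 \text{ for } \xi_1>0 \,\}
\end{equation*}
and integrate $\partial_t h = A_0 h$ along the characteristics $s \mapsto x - \xi s$. With the incoming boundary absorbed to zero, this yields the explicit formula
\begin{equation*}
(S_0(t)h_0)(x,\xi) =
\begin{cases}
e^{(\sigma\xi_1 - \nu(\xi))t}\, h_0(x-\xi t,\xi) & \text{if } \xi_1 \le 0 \text{ or } x_1 > \xi_1 t,\\
0 & \text{otherwise}.
\end{cases}
\end{equation*}
The bound $\nu(\xi) \ge \nu_0 \langle\xi\rangle$ from Proposition~\ref{LGamma}(i) gives $\sigma\xi_1 - \nu(\xi) \le -\nu_0$ whenever $\sigma \le \nu_0$, so a measure-preserving change of variables $y = x - \xi t$ (whose image lies inside $\mathbb{R}^3_+$ in both cases) produces $\|S_0(t)h_0\| \le e^{-\nu_0 t}\|h_0\|$. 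Strong continuity at $t=0$ is checked on a dense subspace of smooth, compactly supported functions vanishing near the incoming boundary via dominated convergence (the exponential tends to $1$ and, on a compact support, the indicator equals $1$ for all small $t$), and then extended by density. The semigroup identity $S_0(t+s) = S_0(t)S_0(s)$ follows from composition of characteristic flows together with the absorbing condition.

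The perturbation step is immediate: Proposition~\ref{LGamma}(iii) gives that $K: L^2_\xi \to L^2_\xi$ is bounded, and since $K$ acts solely in the velocity variable it extends to a bounded operator on $L^2 = L^2(\mathbb{R}^3_+; L^2_\xi)$. The bounded perturbation theorem for $C_0$-semigroups then produces a $C_0$-semigroup $S(t)$ on $L^2$ generated by $A_0 + K$, characterised by the mild formula
\begin{equation*}
S(t)h_0 = S_0(t)h_0 + \int_0^t S_0(t-s)\, K\, S(s)h_0 \,ds.
\end{equation*}
Setting $h(t) := S(t)h_0$ gives a solution in $C([0,T];L^2)$; uniqueness in the distributional class can also be obtained directly from the dissipativity in Proposition~\ref{LGamma}(ii), by testing against $h$ and running Gronwall on $\|h(t)\|^2$ (the boundary term at $x_1=0$ is non-negative thanks to the vanishing incoming trace).

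The main bookkeeping obstacle is identifying the explicit characteristic formula with a $C_0$-semigroup whose generator coincides with $A_0$ on the stated domain, which amounts to treating the trace on the incoming boundary $\{x_1 = 0,\ \xi_1>0\}$ correctly in the weighted $L^2$ sense. This is classical for absorbing half-space transport, and beyond it no new analytic difficulty arises, which is presumably why the authors dispatch this lemma in a single line.
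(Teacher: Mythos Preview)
Your proposal is correct and matches the paper's intended approach: the authors omit the proof as ``standard,'' but immediately afterward they write down exactly your characteristic formula for $S_0(t)$ in \eqref{form1} and the Duhamel relation \eqref{SS0}, so the decomposition $A_0+K$ with $K$ a bounded perturbation is precisely what they have in mind. Your bound $\sigma\xi_1-\nu(\xi)\le -\nu_0$ for $\sigma\le\nu_0$ is also consistent with their later use of the decay rate in \eqref{S0decay}.
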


Let us also consider an initial boundary value problem of a damped transport equation:
\begin{subequations}\label{de0}
\begin{gather}
\partial_{t} h + \xi \cdot \nabla_{x} h - \sigma \xi_{1} h - \nu(\xi) h  = 0,
\label{de1} \\
h(0,x,\xi) = h_{0}(x,\xi), \quad x \in {\mathbb R}^{3}_{+} , \   \xi \in \mathbb R^{3},
\label{dic1}\\
h(t,0,x',\xi) = 0, \quad t>0, \  x' \in \mathbb R^{2}, \ \xi_{1} >0,
\label{dbc1}\\
\lim_{x_{1}\to\infty} h (t,x_{1},x',\xi) \to  0 \ \ (x_{1} \to \infty),  \quad t>0, \ x' \in \mathbb R^{2}, \ \xi \in \mathbb R^{3}.
\label{dbc2}
\end{gather}
\end{subequations}
We denote the solution operator of problem \eqref{de0} by $S_{0}(t)$, and know that
\begin{equation}\label{form1}
(S_{0} (t) h_0) (t, x, \xi) = e^{-(\nu (\xi)- \sigma \xi_1) t} \chi (x_1  - \xi_1 t)  h_0 (x_1 - \xi_1 t, x_2- \xi_2 t, x_3-\xi_3 t, \xi)
\end{equation}
with $\chi (s)$ being the 1D characteristic function of the set $\{ s > 0 \}$.
It is obvious that $S_{0}(t)$ is  a $C_{0}$-semigroup on $L^{2}$.
Note that the solution operators $S(t)$ and $S_{0}(t)$ has the relation 
\begin{equation}\label{SS0}
S(t)h_{0}=S_{0}(t)h_{0}+S_{0}*(KSh_{0}), \quad \text{where} \ S_{0}*f:=\int_{0}^{t} S_{0}(t-s)f(s)ds.
\end{equation}

\subsection{Decay estimates in $L^{2}$ and $L^{\infty}_{\beta}$}\label{sec4.1}
This subsection deals with the decay estimate of $S(t)h_{0}$
in the function spaces $L^{2}$ and $L^{\infty}_{\beta}$.
In much the same way as in the proof of (2.10) in \cite{UYY03},
one can show the following lemma on the decay estimate in $L^{2}$.

\begin{lemma}\label{lem2}
For any $h_{0} \in {L^{2} }$, the solution operator $S(t)$ satisfies
\begin{gather*}
\| S(t)h_{0} \|  \leq  \|h_{0} \| e^{-\kappa t},
\end{gather*}
where $\kappa \leq \nu_{0}/2$ is a positive constant depending only on $\nu_{0}$, $\nu_{1}$, and $\nu_{2}$
being in Proposition \ref{LGamma}. 
\end{lemma}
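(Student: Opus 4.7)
The plan is to run a standard $L^2$ energy estimate on the linearized system \eqref{le0}, combined with a macro--micro decomposition that exploits the negative definiteness of $A = P\xi_1 P$ on $N$, which is precisely the content of the supersonic condition $\mathcal{M}_\infty < -1$ through Proposition \ref{LGamma}(v).

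First I would multiply \eqref{le1} by $h$ and integrate over $\mathbb{R}^3_+ \times \mathbb{R}^3$. Integration by parts in the tangential directions contributes nothing (by standard density/approximation in $L^2$), while the $\xi_1 \pd_{x_1} h$ term produces a boundary integral at $x_1 = 0$. The part with $\xi_1 > 0$ vanishes by \eqref{lbc1}, and the part with $\xi_1 < 0$ contributes $\tfrac{1}{2}\int_{\mathbb{R}^2}\int_{\xi_1<0}(-\xi_1)\,h^2(t,0,x',\xi)\,d\xi\,dx' \ge 0$. Using Proposition \ref{LGamma}(ii) on the collision term yields
\begin{equation*}
\tfrac12\tfrac{d}{dt}\|h\|^2 + \nu_1 \|\langle\xi\rangle^{1/2}(I-P)h\|^2 \le \sigma \int_{\mathbb{R}^3_+ \times \mathbb{R}^3} \xi_1 h^2 \, dx\,d\xi .
\end{equation*}

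Next, I decompose $h = Ph + (I-P)h$ and expand the right-hand side. By Proposition \ref{LGamma}(v), $\int \xi_1 (Ph)^2 \, d\xi = (A Ph, Ph)_{L^2_\xi} \le -\nu_2 \|Ph\|_{L^2_\xi}^2$, so the macro part produces a friction term $-\sigma\nu_2\|Ph\|^2$. The cross term $2\sigma \int \xi_1 Ph \cdot (I-P)h$ is handled by Cauchy--Schwarz (invoking Proposition \ref{LGamma}(vi) to control $P\xi_1 Ph$) and Young's inequality, giving $\tfrac{\sigma \nu_2}{2}\|Ph\|^2 + C\sigma \|\langle\xi\rangle^{1/2}(I-P)h\|^2$. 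The pure micro term $\sigma \int \xi_1 ((I-P)h)^2$ is absorbed by $\sigma \|\langle\xi\rangle^{1/2}(I-P)h\|^2$. Combining everything,
\begin{equation*}
\tfrac12\tfrac{d}{dt}\|h\|^2 + (\nu_1 - C\sigma)\|\langle\xi\rangle^{1/2}(I-P)h\|^2 + \tfrac{\sigma \nu_2}{2}\|Ph\|^2 \le 0 .
\end{equation*}

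Choosing $\sigma$ small enough that $\nu_1 - C\sigma \ge \nu_1 / 2$, the inequality collapses to $\tfrac{d}{dt}\|h\|^2 + 2\kappa \|h\|^2 \le 0$ for some $\kappa > 0$ depending only on $\nu_0$, $\nu_1$, $\nu_2$, and Gr\"onwall delivers the claim. The restriction $\kappa \le \nu_0/2$ is imposed for compatibility with the damping rate of the free transport semigroup $S_0(t)$ appearing through the formula \eqref{form1}, which will be needed in subsequent arguments.

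The main obstacle is the macro--micro balancing: one must keep the coefficient of $\|\langle\xi\rangle^{1/2}(I-P)h\|^2$ strictly positive after absorbing the cross term, while still retaining a strictly positive coefficient in front of $\|Ph\|^2$. This is only possible because (v) supplies genuine negativity for the $Ph$--$Ph$ interaction in $\int \xi_1 (Ph)^2 d\xi$; without the condition $\mathcal{M}_\infty < -1$, the sign would be indefinite and the scheme would fail.
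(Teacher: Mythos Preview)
Your proposal is correct and follows essentially the same energy method as the paper intends: the paper does not spell out a proof of Lemma~\ref{lem2} but refers to (2.10) in \cite{UYY03}, and the very computation you outline---multiply by $h$, drop the favorable boundary term, use Proposition~\ref{LGamma}(ii) for the micro part and Proposition~\ref{LGamma}(v) for the macro part, then absorb the cross term via Young's inequality by taking $\sigma$ small---appears verbatim later in the paper in the proof of Proposition~\ref{tps1} (see \eqref{unies3}--\eqref{unies6}). One small correction: in the cross term you want to control $(\xi_1 Ph,(I-P)h)=(P\xi_1(I-P)h,Ph)$, so Proposition~\ref{LGamma}(vi) is applied to $P\xi_1(I-P)h$, not to $P\xi_1 Ph$; the rest of the absorption goes through exactly as you wrote.
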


We derive the decay estimate of $S(t)h_{0}$ in $L^{\infty}_{\beta}$
by using the solution operator $S_{0}(t)$ for the problem \eqref{de0}.
It is straightforward to see from \eqref{form1} and Proposition \ref{LGamma} (i) that
\begin{equation}\label{S0decay}
\|S_{0} (t) h_0 \|_{X} \lesssim  \| h_{0}\|_{X} e^{-(2\kappa-\epsilon)t} \quad 
\text{for $X=L^{2} $, $L^{\infty}_{\beta}$}
\end{equation}
with $\kappa$ being in Proposition \ref{LGamma} (i) and some positive constant $\epsilon \ll \kappa$.
From \eqref{SS0}, we have 
\begin{align}
S(t) h_0 &= S_0 (t) h_0 + \int^t_0 S_0 (t-s) KS (s) h_0 ds = \sum_{j=0}^{m-1} I_j (t) + J_m (t),  
\label{sg-h-1}
\end{align}
where
\begin{align*}
I_0 (t) &:= S_0 (t) h_0 ,
\\
I_j (t) &:= \int^t_0 S_0 (t-s) KI_{j-1} (s) ds = (S_0 K) * I_{j-1} ,
\\
J_m (t) &:= (S_0 K) *(S_0 K) * \cdots *(S_0 K) * h , \quad h(t) = S(t) h_0.
\end{align*}
Here $*$ means convolution in $t$,
and there are $m$ copies of $S_0 K$ in $J_{m}$.

\begin{lemma}\label{lemS}
Let $\beta>3/2$. For $h_{0} \in  L^{2}  \cap L^{\infty}_{\beta}$, there holds that
\begin{gather*}
\| S(t) h_{0} \|_{\beta}  \lesssim  [[ h_{0} ]]_{\beta} e^{-\kappa t}.
\end{gather*}
\end{lemma}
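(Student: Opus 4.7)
The plan is to exploit the decomposition \eqref{sg-h-1}, i.e., $S(t)h_0 = \sum_{j=0}^{m-1} I_j(t) + J_m(t)$, choosing $m$ sufficiently large. The strategy is to bound each $I_j$ directly in $L^\infty_\beta$ by iterating the $L^\infty_\beta$-regularizing property of $K$ (Proposition \ref{LGamma} (iii)) together with the explicit $L^\infty_\beta$-decay \eqref{S0decay} of $S_0$, while controlling the tail $J_m$ via the $L^2$-decay of $S(t)$ from Lemma \ref{lem2}, after reducing to $L^2$ by exploiting the smoothing gained from composing $m$ copies of the kernel operator $K$.

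For the leading term $I_0 = S_0(t) h_0$, estimate \eqref{S0decay} gives exponential decay at rate $2\kappa - \epsilon$ directly. For $j\geq 1$, since $K: L^\infty_{\xi,\beta} \to L^\infty_{\xi,\beta+1} \hookrightarrow L^\infty_{\xi,\beta}$ is bounded, an induction on the Duhamel relation $I_j = (S_0 K)*I_{j-1}$ combined with the exponential decay of $S_0$ yields
\begin{equation*}
\|I_j(t)\|_\beta \lesssim \frac{(Ct)^j}{j!} \|h_0\|_\beta e^{-(2\kappa-\epsilon)t},
\end{equation*}
which is absorbed into $\|h_0\|_\beta e^{-\kappa t}$ by sacrificing a small sliver of the decay rate, uniformly in $j \leq m-1$.

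The tail $J_m$ requires more care. Using \eqref{form1}, I would write $J_m(t,x,\xi)$ as an iterated integral over $0 \leq s_m \leq \cdots \leq s_1 \leq t$ and over intermediate velocities $\xi^{(1)},\ldots,\xi^{(m)}\in\mathbb R^3$ of the product $\prod_{i=1}^{m} K(\xi^{(i-1)}, \xi^{(i)})\cdot (S(s_m)h_0)(y_m,\xi^{(m)})$, with $\xi^{(0)}=\xi$ and $y_m$ the backward broken trajectory determined by $(x,\xi,\xi^{(1)},\ldots,\xi^{(m-1)},s_1,\ldots,s_m)$, weighted by exponential factors coming from $\nu$ and $\sigma$. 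I would then apply Hölder's inequality in the velocity variables using Lemma \ref{K1}: bounds \eqref{K2}--\eqref{K3} absorb $m-2$ of the kernel factors into the universal constants $C(\alpha)$ and $C(\alpha,\beta)$, while the final pair with $A_{p',q'}<\infty$ from \eqref{K4} closes the estimate and extracts the $L^2$ norm $\|S(s_m)h_0\|$ after a change of variable $\xi^{(m)}\mapsto y_m$ along a generic characteristic (whose Jacobian is controlled). This produces
\begin{equation*}
\langle\xi\rangle^\beta |J_m(t,x,\xi)| \lesssim e^{-\mu t}\int_0^t e^{\mu s}\,\|S(s)h_0\|\, ds
\end{equation*}
for some $\mu>\kappa$, and substituting Lemma \ref{lem2} yields $\|J_m(t)\|_\beta \lesssim \|h_0\|\, e^{-\kappa t}$. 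Summing with the contributions from the $I_j$'s gives the desired $\|S(t)h_0\|_\beta \lesssim [[h_0]]_\beta e^{-\kappa t}$.

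The main obstacle will be the bookkeeping inside $J_m$: arranging the Hölder estimate so that exactly $m-1$ of the kernel integrations are absorbed into the constants of Lemma \ref{K1}, while the remaining kernel is matched to a change of variable converting $(S(s_m)h_0)(y_m,\xi^{(m)})$ into a genuine $L^2(\mathbb R^3_+ \times \mathbb R^3)$-norm of $S(s_m)h_0$. The exponent pair $(p,q)$ in Lemma \ref{K1} is tailored for precisely this reduction, and $m$ must be chosen large enough (depending on those exponents) for the Hölder accounting to close.
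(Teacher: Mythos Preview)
Your overall architecture matches the paper's: split $S(t)h_0$ via \eqref{sg-h-1}, bound each $I_j$ directly in $L^\infty_\beta$, and handle the tail $J_m$ through the kernel estimates of Lemma~\ref{K1} together with a velocity-to-position change of variable. The treatment of the $I_j$ is fine (the paper records the equivalent bound $\|I_j(t)\|_\beta\lesssim\|h_0\|_{\max\{0,\beta-j\}}e^{-(2\kappa-\epsilon)t}$).

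There is, however, a genuine gap in your $J_m$ estimate. First a bookkeeping point: by your own description $y_m$ is determined by $(x,\xi,\xi^{(1)},\ldots,\xi^{(m-1)},s_1,\ldots,s_m)$, so it does \emph{not} depend on $\xi^{(m)}$ and the substitution $\xi^{(m)}\mapsto y_m$ has zero Jacobian. The correct change of variable is from the last intermediate velocity entering $y_m$; in the paper this is done at the level of $\bar J_0=KS_0(t)Kh$, with $\xi'\mapsto y=x-\xi' t$ and Jacobian $t^{-3}$.

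More seriously, the claimed pure $L^2$ bound
\[
\langle\xi\rangle^\beta|J_m(t,x,\xi)|\lesssim e^{-\mu t}\int_0^t e^{\mu s}\|S(s)h_0\|\,ds
\]
is not obtainable from Lemma~\ref{K1}. After the change of variable one lands on
\[
t^{-3}\int_{\mathbb{R}^3_+}\|h(s,y,\cdot)\|_{L^2_\xi}^{p}\,dy,
\]
and integrability in time of the resulting factor $t^{-3/p}$ forces $p>3$; this is precisely why the admissible pair in \eqref{K4} satisfies $p\in(3,4)$. With $p>3$ one cannot extract $\|h(s)\|_{L^2}$ directly. The paper instead interpolates,
\[
\int_{\mathbb{R}^3_+}\|h(s,y,\cdot)\|_{L^2_\xi}^{p}\,dy \le \|h(s)\|_\beta^{\,p-2}\,\|h(s)\|^{2},
\]
obtaining
\[
\|J_2(\tau)\|_{L^\infty_x L^2_\xi}\lesssim e^{-\kappa \tau}\Big(\sup_{0\le s\le \tau}e^{\kappa s}\|S(s)h_0\|_\beta\Big)^{(p-2)/p}\|h_0\|^{2/p},
\]
and then closes by Young's inequality as a \emph{bootstrap}: the term $\epsilon_1 e^{-\kappa t}\sup_\tau e^{\kappa\tau}\|S(\tau)h_0\|_\beta$ in \eqref{J_beta1} is absorbed into the left side after multiplying \eqref{sg-h-1} by $e^{\kappa t}$ and taking the supremum. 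Your outline omits this absorption step, and without it the argument does not close. Taking $m$ larger does not help: extra copies of $S_0K$ lower the $\xi$-weight (that is how the paper reduces $\|J_{[\beta]+4}\|_\beta$ to $\|J_2\|_{L^\infty_x L^2_\xi}$ in \eqref{J_beta2}), but they do not improve the $L^p_x\to L^2_x$ reduction, which is governed solely by the $t^{-3}$ Jacobian.
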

\begin{proof}
We can estimate the term $I_{j}$ in \eqref{sg-h-1} by using \eqref{S0decay} as 
\begin{align}\label{I_j}
\|I_j(t) \|_\beta \lesssim \|h_0 \|_{\max\{0,\beta-j\}} e^{-(2\kappa - \epsilon)t} , 
\end{align}
where we have retaken $\epsilon$ so that both \eqref{S0decay} and \eqref{I_j} hold with the same $\epsilon$.
For the completion of the proof, it suffices to show that for any $\epsilon_{1}>0$,
\begin{equation} \label{J_beta1}
\|J_{[\beta] +4} (t) \|_\beta 
\leq \epsilon_{1}e^{-\kappa t}  \sup_{0\leq \tau \leq t} e^{\kappa t} \| S(\tau) h_{0} \|_{\beta} 
+ C({\epsilon_{1}}) \| h_{0} \| e^{-\kappa t},
\end{equation}
where $[\cdot]$ is the floor function.
Indeed, letting $m=[\beta]+4$, taking the $L^{\infty}_{\beta}$ norm of \eqref{sg-h-1}, 
multiplying it by $e^{\kappa t}$, taking the supremum of the result in $t$, 
and letting $\epsilon_{1}$ be small, 
we have the desired decay estimate.

To show \eqref{J_beta1}, we set $m:=[\beta]$ and observe by using \eqref{S0decay} that 
\begin{equation}\label{J_beta2}
\| J_{m+4} (t) \|_\beta \lesssim \frac{1}{(m+1)!} \int^t_0 (t-\tau)^{m+1} e^{-(2\kappa - \epsilon)(t-\tau)} \|J_2 (\tau)\|_{L^\infty_x L^2_\xi} d\tau.
\end{equation}
Recalling $h =  S(t)h_{0}$ and $J_2 (t) =  (S_0 K) * (S_0 K) * h$, we can write
\begin{equation*}
J_2 (t) =  S_0 * \bar{J}, 
\end{equation*}
where 
\begin{align*}
\bar{J}(t) := & (K S_0 K) * h = \int^t_0 K S_0 (t-s) K h(s) ds = \int^t_0 \bar{J}_0 (t-s, s) ds,
\\
\bar{J}_0 (t, s) := & KS_{0}(t)K h(s)
=  \int_{\mathbb{R}^3}\!\!\int_{\mathbb{R}^3} K(\xi, \xi') K(\xi', \xi'') e^{-(\nu (\xi')- \sigma \xi'_1) t} \chi (y_1) h (s, y, \xi'') d\xi' d\xi''
\end{align*}
with $y= x  - \xi' t$ and $y_1=x_{1}-\xi'_1 t$.
Now we estimate $\bar{J}_0$. From Proposition \ref{LGamma} (i), we have
\begin{align*}
\vert \bar{J}_0(t,s) \vert 
& \le e^{-(2\kappa-\varepsilon)t} \int_{\mathbb{R}^3}\!\!\int_{\mathbb{R}^3} \vert K(\xi, \xi') K(\xi', \xi'')  \chi(y_1)  h(s,y,\xi'') \vert d\xi' d\xi'' 
\\
& \le e^{-(2\kappa-\varepsilon)t} \int_{\mathbb{R}^3} \vert K(\xi,\xi') \vert \Big(\int_{\mathbb{R}^3}\vert K(\xi', \xi'')\vert^2d\xi''\Big)^{1/2}
\Big(\int_{\mathbb{R}^3}  \chi(y_1)^2 \vert h(s,y,\xi'')\vert^2d\xi''\Big)^{1/2} d\xi',
\end{align*}
where we have also applied the Schwarz inequality in deriving the last inequality.
Using Lemma \ref{K1}, we have
\begin{align*}
\int_{\mathbb{R}^3} \vert \bar{J}_0(t,s)\vert^2 d\xi 
&\lesssim e^{-2(2\kappa -\varepsilon)t} \int_{\mathbb{R}^3} \Big[ \int_{\mathbb{R}^3} \vert K(\xi,\xi')\vert \chi(y_1) \Vert h(s,y,\cdot)\Vert_{L^2_\xi} d\xi'\Big]^2d\xi.
\end{align*} 
Applying the H\"{o}lder inequality twice leads to
\begin{align*}
\int_{\mathbb{R}^3} \vert \bar{J}_0(t,s)\vert^2 d\xi 
& \lesssim e^{-2(2\kappa -\varepsilon)t} \int_{\mathbb{R}^3}\Big(\int_{\mathbb{R}^3} \vert K(\xi,\xi')\vert^{p'/2}d\xi'\Big)^{2/p'} 
\\
& \qquad \times \Big(\int_{\mathbb{R}^3} \vert K(\xi, \xi')\vert^{p/2} \chi^{p}(y_1) \Vert h(s,y,\cdot)\Vert_{L^2_\xi}^p d\xi'\Big)^{2/p} d\xi
\\
&\lesssim e^{-2(2\kappa -\varepsilon)t} A_{p',q'}
 \Big( \int_{\mathbb{R}^3} \Big( \int_{\mathbb{R}^3} \vert K(\xi, \xi') \vert^{p/2} \chi^{p}(y_1) \Vert h(s,y,\cdot)\Vert_{L^2_\xi}^p d\xi'\Big)^{2q/p} d\xi\Big)^{1/q},
\end{align*}
where $p'$ and $q'$ are the H\"{o}lder conjugates of $p\ge 1$ and $q\ge 1$, respectively, and 
\begin{align*}
A_{p',q'}=\Big(\int_{\mathbb{R}^3} \Big(\int_{\mathbb{R}^3} \vert K(\xi, \xi')\vert^{p'/2} d\xi'\Big)^{2q'/p'} d\xi\Big)^{1/q'}.
\end{align*}
We can choose a pair of $p$ and $q$ so that $p \in (3,4)$, $q \in (1,3)$, $p<2q$, and $A_{p',q'}<+\infty$
thanks to Lemma \ref{K1}.
Then using the Minkowski integral inequality, that is,
\begin{align*}
\Vert \Vert F(\xi,\xi')\Vert_{L^p_{\xi'}}^2\Vert_{L^q_\xi} 
= \Vert \Vert F(\xi,\xi')\Vert_{L^p_{\xi'}} \Vert_{L^{2q}_\xi}^2
\le \Vert \Vert F(\xi,\xi')\Vert_{L^{2q}_\xi} \Vert_{L^p_{\xi'}}^2,
\end{align*}
and Lemma \ref{K1}, we obtain
\begin{align*}
\int_{\mathbb{R}^3} \vert \bar{J}_0(t,s)\vert^2 d\xi 
&\lesssim e^{-2(2\kappa -\varepsilon)t} \Big( \int_{\mathbb{R}^3} \Big( \int_{\mathbb{R}^3} \vert K(\xi, \xi') \vert^{q} \chi^{2q}(y_1) \Vert h(s,y,\cdot) \Vert^{2q}_{L^{2}_{\xi}} d\xi \Big)^{p/2q} d\xi'\Big)^{2/p}
\\
& \lesssim e^{-2(2\kappa -\varepsilon)t} \Big( \int_{\mathbb{R}^3} \chi^{p}(y_1)  \Vert h(s,y,\cdot) \Vert^{p}_{L^{2}_{\xi}}  d\xi'\Big)^{2/p}
\\
& = e^{-2(2\kappa -\varepsilon)t}  \Big( \int_{\mathbb{R}^3_{+}} \Vert h(s,y,\cdot)\Vert^p_{L^{2}_{\xi}} t^{-3}dy\Big)^{2/p}
\\
& = e^{-2(2\kappa -\varepsilon)t}  t^{-6/p}\Big( \int_{\mathbb{R}^3_{+}} \Vert h(s,y,\cdot)\Vert^{p-2}_{L^{2}_{\xi}} \Vert h(s,y,\cdot)\Vert^2_{L^{2}_{\xi}} dy\Big)^{2/p},
\end{align*}
where we have used the change of variables $y= x  - \xi' t$ in deriving the first equality.
Then using $\|\cdot\|_{L^{2}_{\xi}} \lesssim \|\cdot\|_{\xi,\beta}$ for $\beta>3/2$ and Lemma \ref{lem2}, we arrive at
\begin{align*}
\int_{\mathbb{R}^3} \vert \bar{J}_0(t,s)\vert^2 d\xi 
& \lesssim e^{-2(2\kappa -\varepsilon)t}  t^{-6/p} \|h(s)\|_{\beta}^{(2p-4)/p} \Big( \int_{\mathbb{R}^3_{+}}  \Vert h(s,y,\cdot)\Vert^2_{L^{2}_{\xi}} dy\Big)^{2/p}
\\
& \lesssim e^{-2(2\kappa -\varepsilon)t}  t^{-6/p} e^{-2\kappa s} \left(\sup_{0\leq \tau \leq s} e^{\kappa \tau} \| h(\tau) \|_{\beta} \right)^{(2p-4)/p} \|h_{0}\|^{4/p}.
\end{align*}
From this and the fact $p\in (3,4)$, there holds that
\begin{align*}
\|\bar{J}(t)\|_{L^{\infty}_{x}L^{2}_{\xi}}
& \leq \int_{0}^{t} \|\bar{J}_{0}(t-s,s) \|_{L^{\infty}_{x}L^{2}_{\xi}} ds
\\
& \lesssim \int_{0}^{t}  e^{-(2\kappa -\varepsilon)(t-s)}  (t-s)^{-3/p} e^{-\kappa s} \left(\sup_{0\leq \tau \leq s} e^{\kappa \tau} \| h(\tau) \|_{\beta} \right)^{(p-2)/p} \|h_{0}\|^{2/p} ds
\\
& \lesssim e^{-\kappa t}  \left(\sup_{0\leq \tau \leq t} e^{\kappa \tau} \| h(\tau) \|_{\beta} \right)^{(p-2)/p} \|h_{0}\|^{2/p}.
\end{align*}
This together with \eqref{S0decay} implies that 
\begin{align*}
\|J_{2}(t)\|_{L^{\infty}_{x}L^{2}_{\xi}}
& \lesssim e^{-\kappa t}\left(\sup_{0\leq \tau \leq t} e^{\kappa \tau} \| h(\tau) \|_{\beta} \right)^{(p-2)/p} \|h_{0}\|^{2/p}.
\end{align*}
Plugging this into \eqref{J_beta2} and then using the Young inequality, we conclude \eqref{J_beta1}.
The proof is complete.
\end{proof}

\subsection{Estimates of the convolutions in time}\label{sec4.3}

We also estimate the convolutions
\begin{equation*}
S*h:=\int_{0}^{t} S(t-s) h(s) ds, \quad 
S*_{t_{0}}h:=\int_{t_{0}}^{t} S(t-s) h(s) ds.
\end{equation*}

\begin{lemma} \label{lemS*}
Let $\beta >3/2$. There hold that
\begin{gather}
[[ (S *_{t_{0}}  h)(t) ]]_\beta  \leq C_{0}  e^{-\kappa t/2} \sup_{t_{0} \leq \tau \leq t} e^{\kappa \tau /2} [[h (\tau) ]]_{\beta-1},
\label{S*_beta} \\
[[ (S * h)(t) ]]_\beta  \lesssim  \sup_{0 \leq \tau \leq t} [[h (\tau) ]]_{\beta-1}
\label{S*_beta2} 
\end{gather}
for every function $h(t, x, \xi)$ with the relevant norm bounded, 
where $[[\,\cdot \,]]_\beta = \|\cdot\|_\beta + \|\cdot\|$, and $C_{0}$ is a positive constant independent of $t_{0}$.
\end{lemma}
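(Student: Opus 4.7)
The plan is to deduce both estimates by treating $w(t):=(S*_{t_{0}}h)(t)$, which satisfies $w(t_{0})=0$, as the solution of
$\partial_{t}w+\xi\cdot\nabla_{x}w-\sigma\xi_{1}w+\nu w=Kw+h$,
or equivalently as a fixed point of the integral relation $w=S_{0}*_{t_{0}}(Kw+h)$. Throughout, set $M:=\sup_{t_{0}\le\tau\le t}e^{\kappa\tau/2}[[h(\tau)]]_{\beta-1}$.

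For the $L^{2}$ half of $[[\cdot]]_{\beta}$, Lemma \ref{lem2} together with $\|h(s)\|\le[[h(s)]]_{\beta-1}$ yields
\begin{equation*}
\|(S*_{t_{0}}h)(t)\|\le\int_{t_{0}}^{t}e^{-\kappa(t-s)}\|h(s)\|\,ds\le M\,e^{-\kappa t}\int_{t_{0}}^{t}e^{\kappa s/2}\,ds\le\tfrac{2}{\kappa}\,M\,e^{-\kappa t/2},
\end{equation*}
with a $t_{0}$-independent constant, as required.

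The main work lies in the $L^{\infty}_{\beta}$ part. The key observation is that, for $\sigma\le\nu_{0}/2$, the lower bound $\nu(\xi)-\sigma\xi_{1}\ge(\nu_{0}/2)\langle\xi\rangle$ from Proposition \ref{LGamma} (i) combined with the explicit formula \eqref{form1} gives
\begin{equation*}
\langle\xi\rangle^{\beta}|(S_{0}(t-s)f)(x,\xi)|\le\langle\xi\rangle\,e^{-(\nu_{0}/2)(t-s)\langle\xi\rangle}\,\|f(s)\|_{\beta-1},
\end{equation*}
and hence, for $\kappa\le\nu_{0}/2$, a direct computation of the time integral (keeping $\xi$ unresolved) shows
\begin{equation*}
\int_{t_{0}}^{t}\langle\xi\rangle\,e^{-(\nu_{0}/2)(t-s)\langle\xi\rangle}\,e^{-\kappa s/2}\,ds\le\frac{\langle\xi\rangle}{(\nu_{0}/2)\langle\xi\rangle-\kappa/2}\,e^{-\kappa t/2}\lesssim e^{-\kappa t/2}
\end{equation*}
uniformly in $\xi$. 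This produces the desired gain of one power of $\langle\xi\rangle$ and yields $\|(S_{0}*_{t_{0}}f)(t)\|_{\beta}\lesssim e^{-\kappa t/2}\sup_{t_{0}\le\tau\le t}e^{\kappa\tau/2}\|f(\tau)\|_{\beta-1}$. Applying this with $f=Kw+h$ and invoking $\|Kw\|_{\beta-1}\le\|Kw\|_{\beta}\lesssim\|w\|_{\beta-1}$ from Proposition \ref{LGamma} (iii) gives
\begin{equation*}
\|w(t)\|_{\beta}\lesssim e^{-\kappa t/2}\sup_{t_{0}\le\tau\le t}e^{\kappa\tau/2}\bigl(\|w(\tau)\|_{\beta-1}+[[h(\tau)]]_{\beta-1}\bigr).
\end{equation*}

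To close the estimate we need $\|w\|_{\beta-1}$. For this we apply Lemma \ref{lemS} at weight $\beta-1$ directly to $w=S*_{t_{0}}h$: $\|w(t)\|_{\beta-1}\le\int_{t_{0}}^{t}\|S(t-s)h(s)\|_{\beta-1}\,ds\lesssim\int_{t_{0}}^{t}e^{-\kappa(t-s)}[[h(s)]]_{\beta-1}\,ds\lesssim e^{-\kappa t/2}M$. Substituting back yields \eqref{S*_beta}. The second bound \eqref{S*_beta2} then follows by specializing to $t_{0}=0$ and absorbing $e^{\kappa t/2}\cdot e^{-\kappa t/2}=1$ into the supremum. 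The main obstacle is producing precisely the one-power gain from $[[h]]_{\beta}$ to $[[h]]_{\beta-1}$: a brute-force application of Lemma \ref{lemS} inside $\int S(t-s)h(s)\,ds$ yields only $[[h]]_{\beta}$ on the right. The saving comes from the fact that the damping rate in $S_{0}$ is of order $\langle\xi\rangle$, which upon time integration produces the missing factor $\langle\xi\rangle^{-1}$, provided the velocity variable is kept unresolved until after the $s$-integration.
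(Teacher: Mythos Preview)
Your proof is correct and follows essentially the same route as the paper's: both rely on the Duhamel relation $S*_{t_{0}}h=S_{0}*_{t_{0}}h+S_{0}*_{t_{0}}K(S*_{t_{0}}h)$ (which is exactly your fixed-point identity $w=S_{0}*_{t_{0}}(Kw+h)$), extract the gain of one power of $\langle\xi\rangle$ from the damping rate $\nu(\xi)\gtrsim\langle\xi\rangle$ inside the $S_{0}$ time integral, and close via Lemma~\ref{lemS} at weight $\beta-1$ together with the regularizing property of $K$ in Proposition~\ref{LGamma}~(iii). The only cosmetic difference is that the paper splits the two pieces $S_{0}*_{t_{0}}h$ and $S_{0}*_{t_{0}}(KS*_{t_{0}}h)$ and handles the second with an explicit double integral, whereas you keep them together and close on $\|w\|_{\beta-1}$ in one step.
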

\begin{proof}
We show only \eqref{S*_beta}, since \eqref{S*_beta2} follows from taking $t_{0}=0$ and $\kappa=0$ in \eqref{S*_beta}. 
To this end, we first estimate the norm $\|\cdot\|_{\beta}$ by using
the relation \eqref{SS0}, i.e., $S=S_{0}+S_{0}*KS$.
It is {{observed}} from \eqref{form1} that
\begin{align} \label{S_0*}
\| (S_0 *_{t_{0}} h) (t)\|_\beta
& =\| (S_0 *_{t_{0}} \nu \nu^{-1} h) (t)\|_\beta
\notag \\
& \leq \sup_{x \in \mathbb R^{3}_{+}, \xi \in \mathbb R^{3}} \int^t_{t_{0}} \langle \xi \rangle^\beta  e^{-(\nu (\xi)  -\sigma \xi_1) (t-s)} \chi (x_1 - \xi_1 s) \nu(\xi)\nu^{-1}(\xi) |h(s, x-\xi s, \xi)| ds 
\notag\\
& \leq e^{-\kappa t}  (\sup_{t_{0} \leq \tau \leq t}  e^{\kappa \tau} \|\nu^{-1} h (\tau) \|_\beta ) \sup_{\xi \in \mathbb R^{3}}\left\{ \int^t_{t_{0}} e^{-(\nu (\xi) - \kappa -\sigma \xi_1) (t-s)} \nu(\xi) ds \right\} 
\notag\\
& \leq C_{0} e^{-\kappa t} \sup_{t_{0} \leq \tau \leq t}  e^{\kappa \tau} \|h (\tau)\|_{\beta-1},
\end{align}
where we have also used Proposition \ref{LGamma} (i) in deriving the last inequality.
Now we estimate the other part $S_{0}*KS$. It holds that
\begin{align*}
((S_{0}*KS)*_{t_{0}}h)(t)
&= \int_{t_{0}}^{t} \left\{ \int_{0}^{t-\tau} S_{0}(t-\tau-s) KS(s) ds \right\} h(\tau) d\tau
\\
&= \int_{t_{0}}^{t} S_{0}(t-s) \left\{ \int_{t_{0}}^{s} KS(s-\tau) h(\tau) d\tau \right\}  ds
\\
&=S_{0}*_{t_{0}}(KS*_{t_{0}}h)(t).
\end{align*}
Then using \eqref{S_0*}, Proposition \ref{LGamma} (iii),  and Lemma \ref{lemS}, we have
\begin{align*}
\| S_0 *_{t_{0}} (KS *_{t_{0}} h)(t) \|_\beta
& \leq C_{0}\int_{t_{0}}^{t} e^{-\kappa(t-s)}\|(S*_{t_{0}} h)(s)\|_{\beta-1} ds
\notag \\
& \leq C_{0} \int_{t_{0}}^{t} e^{-\kappa(t-s)} \int_{t_{0}}^{s} e^{-\kappa(s-\tau)}[[h(\tau)]]_{\beta-1} d\tau ds
\notag \\
& \leq C_{0} (\sup_{t_{0} \leq \tau \leq t} e^{\kappa \tau/2} [[h(\tau)]]_{\beta-1}) \int_{t_{0}}^{t} e^{-\kappa(t-s)} \int_{t_{0}}^{s} e^{-\kappa(s-\tau)} e^{-\kappa \tau/2}d\tau ds
\notag \\
& \leq C_{0} e^{-\kappa t/2} \sup_{t_{0} \leq \tau \leq t} e^{\kappa \tau/2} [[h(\tau)]]_{\beta-1}.
\end{align*}

Let us handle the norm $\|\cdot\|$.
Using Lemma \ref{lem2}, we observe that
\begin{align*}
\| (S *_{t_{0}} h)(t) \|
& \leq \int_{t_{0}}^{t} e^{-\kappa(t-s)}\|h(s)\| ds
\notag \\
& \leq \sup_{t_{0} \leq \tau \leq t}  e^{\kappa\tau/2}  \|h(\tau)\|   \int_{t_{0}}^{t} e^{-\kappa(t-s)} e^{-\kappa s/2}{ds}
\notag \\
& \leq C_{0} e^{-\kappa t/2} \sup_{t_{0} \leq \tau \leq t}  e^{\kappa\tau/2} \|h(\tau)\| .
\end{align*}
Adding the inequalities above yields \eqref{S*_beta}.
The proof is complete.
\end{proof}

\section{Time-global solvability of the nonlinear problem}\label{sec5}

This section provides the time-global solvability
of {{the}} initial-boundary value problem \eqref{pe0}.
We use the solution space $X({t_{0},\kappa,\beta})$
for $t_{0}\in\mathbb R$, $\kappa \geq 0$, and $\beta>0$ as
\begin{align*}
X({t_{0},\kappa,\beta})&=\{f \in  L^{\infty}(t_{0},\infty;L^{\infty}_{\beta}) \cap C([t_{0},\infty] ; L^{2})\, | \, \tri f\tri_{t_{0},\kappa,\beta} <\infty\}, 
\\
\tri f \tri_{t_0,\kappa,\beta}&=\sup_{t_0 \leq \tau <\infty} e^{\kappa \tau } [[f(\tau)]]_{\beta}
\end{align*}
and define the mapping 
$\Phi : X({0,0,\beta}) \to X({0,0,\beta})$ as
\begin{equation*}
\Phi[\bar{g}]:=S(t)g_{0}+S*\left\{  e^{-\sigma x_{1}} \Gamma(\bar{g}) +  2\Gamma(\varphi_{R}\tilde{f}+U,\bar{g}) + e^{\sigma x_{1}}H \right\}, \quad g_{0} \in L^{2} \cap L^{\infty}_{\beta}.
\end{equation*}
We seek the fixed point $g=\Phi[g]$ which is a mild solution to \eqref{pe0}.
Note that the mild solution $g$ satisfies \eqref{pe0} in the sense of distributions.
The existence is stated in the following theorem. 
\begin{theorem}\label{global1}
Under the same assumptions as in Theorem \ref{th5},
there exists a constant $\eta>0$ such that if $[[g_{0}]]_{\beta}+\delta \leq \eta$, 
the problem \eqref{pe0} has a unique mild solution $g \in X({0,0,\beta})$ satisfying 
$\tri g \tri_{0,0,\beta} \leq C([[g_{0}]]_{\beta}+\delta )$.
\end{theorem}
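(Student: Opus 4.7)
The plan is to apply the Banach fixed-point theorem to $\Phi$ on a closed ball in $X(0,0,\beta)$ of radius comparable to $[[g_0]]_\beta + \delta$. Fix $\beta > 7/2$, set $\eta := [[g_0]]_\beta + \delta$, and for $M > 0$ define $B_M := \{g \in X(0,0,\beta) : \tri g \tri_{0,0,\beta} \leq M\}$. I will choose $M$ proportional to $\eta$ and require $\eta$ small, so that $\Phi$ both maps $B_M$ into itself and is a strict contraction on $B_M$.

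To establish the self-map property I would first bound the homogeneous piece via Lemmas \ref{lem2} and \ref{lemS}, which jointly give $\tri S(t) g_0 \tri_{0,0,\beta} \lesssim [[g_0]]_\beta$. For the Duhamel integral I would apply Lemma \ref{lemS*} with $\kappa = 0$ and $t_0 = 0$, reducing matters to estimating the $\sup_{\tau \geq 0} [[\,\cdot\,]]_{\beta-1}$ of each source term. The forcing is handled by \eqref{H1}: $[[e^{\sigma x_1} H]]_{\beta - 1} \lesssim \delta$. For the quadratic piece $e^{-\sigma x_1}\Gamma(\bar g)$, the weight $e^{-\sigma x_1} \leq 1$ is harmless, and Proposition \ref{LGamma}~(iv) combined with $\nu \sim \langle\xi\rangle$ gives $\|\Gamma(\bar g)\|_{\beta-1} \lesssim \|\bar g\|_\beta^2$ and $\|\Gamma(\bar g)\| \lesssim \|\bar g\|_\beta \|\bar g\| \leq [[\bar g]]_\beta^2$, hence $[[e^{-\sigma x_1}\Gamma(\bar g)]]_{\beta - 1} \lesssim M^2$. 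The same bilinear inequalities yield $[[\Gamma(\varphi_R\tilde f + U,\bar g)]]_{\beta - 1} \lesssim [[\varphi_R\tilde f + U]]_\beta \cdot M$; Proposition \ref{ex-st} together with the compact $x'$-support of $\varphi_R$ and the $e^{-\gamma_0 x_1}$ decay of $\tilde f$ gives $[[\varphi_R\tilde f]]_\beta \lesssim \tilde\delta$, and $[[U]]_\beta \lesssim \delta$ by \eqref{U1}, so this contribution is bounded by $C\delta M$. Collecting,
\begin{equation*}
\tri \Phi[\bar g]\tri_{0,0,\beta} \leq C_1\bigl([[g_0]]_\beta + \delta + \delta M + M^2\bigr).
\end{equation*}
Setting $M := 2C_1\eta$ and requiring $C_1(\delta + M) \leq 1/2$, which follows from smallness of $\eta$, yields $\Phi(B_M) \subset B_M$.

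For the contraction step, given $\bar g_1, \bar g_2 \in B_M$, bilinearity and the symmetry $\Gamma(f,g) = \Gamma(g,f)$ give
\begin{equation*}
\Phi[\bar g_1] - \Phi[\bar g_2] = S * \bigl\{e^{-\sigma x_1}\Gamma(\bar g_1 + \bar g_2,\, \bar g_1 - \bar g_2) + 2\Gamma(\varphi_R\tilde f + U,\, \bar g_1 - \bar g_2)\bigr\}.
\end{equation*}
Reapplying the same bilinear bounds through Lemma \ref{lemS*} produces $\tri \Phi[\bar g_1] - \Phi[\bar g_2]\tri_{0,0,\beta} \leq C_2(M + \delta) \tri \bar g_1 - \bar g_2 \tri_{0,0,\beta}$, which is a strict contraction once $\eta$ is sufficiently small. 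The Banach fixed-point theorem then furnishes a unique mild solution $g \in B_M$ with the asserted bound; uniqueness in the full space $X(0,0,\beta)$ follows from the same difference estimate applied to two hypothetical solutions, and the continuity in $t$ with values in $L^2$ is inherited from the $C_0$-semigroup structure of $S(t)$.

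The principal analytic input — the $L^\infty_\beta$ time decay for $S(t)$ (Lemma \ref{lemS}), the convolution estimate with loss of one power of $\langle\xi\rangle$ (Lemma \ref{lemS*}), and the bilinear bounds of Proposition \ref{LGamma}~(iv) — is already in place, so the remaining obstacle is purely combinatorial: verifying that the single power of $\langle\xi\rangle$ lost in Lemma \ref{lemS*} is exactly compensated by the gain $\nu \sim \langle\xi\rangle$ in the $\nu^{-1}$-weighted nonlinear estimate, while the $L^2$ bilinear bound additionally forces $\beta > 7/2$, which is precisely the hypothesis imposed on $\beta$.
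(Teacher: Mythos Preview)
Your proof is correct and follows essentially the same route as the paper's: both apply the Banach fixed-point theorem to $\Phi$ on a ball of radius proportional to $[[g_0]]_\beta + \delta$, using Lemmas \ref{lem2}--\ref{lemS*} for the semigroup and Duhamel terms together with Proposition \ref{LGamma}~(iv), Proposition \ref{ex-st}, \eqref{U1}, and \eqref{H1} for the source terms. Your self-map and contraction estimates coincide with the paper's Lemma \ref{lemMap}, and you spell out in more detail why the single lost $\langle\xi\rangle$ in Lemma \ref{lemS*} is recovered via $\nu\sim\langle\xi\rangle$; the only cosmetic quibble is that you reuse the symbol $\eta$ for the quantity $[[g_0]]_\beta+\delta$ rather than the smallness threshold, but this does not affect the argument.
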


To prove this, let us first show the following lemma.

\begin{lemma} \label{lemMap}
Under the same assumptions as in Theorem \ref{th5}, there hold that 
\begin{gather} 
\tri\Phi[\bar{g}]\tri_{0,0,\beta}
\lesssim [[g_{0}]]_{\beta}+\delta+(\delta+\tri\bar{g}\tri_{0,0,\beta})\tri\bar{g}\tri_{0,0,\beta},
\label{map1} \\
\tri\Phi[\bar{g}]-\Phi[\bar{h}]\tri_{0,0,\beta}
\lesssim (\delta+\tri\bar{g}\tri_{0,0,\beta}+\tri\bar{h}\tri_{0,0,\beta})\tri\bar{g}-\bar{h}\tri_{0,0,\beta}
\label{map2}
\end{gather}
for any $\bar{g}, \bar{h} \in X({0,0,\beta})$.
\end{lemma}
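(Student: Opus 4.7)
The plan is to split $\Phi[\bar g]$ into four pieces---the linear evolution $S(t)g_{0}$, the inhomogeneous convolution $S*(e^{\sigma x_{1}}H)$, and the two nonlinear convolutions $S*(e^{-\sigma x_{1}}\Gamma(\bar g))$ and $2\,S*(\Gamma(\varphi_{R}\tilde f + U,\bar g))$---and to estimate each one separately using the tools already developed in Section~\ref{sec4}, summing the four bounds at the end.

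For the linear piece, Lemma~\ref{lem2} and Lemma~\ref{lemS} together give $[[S(t)g_{0}]]_{\beta}\lesssim[[g_{0}]]_{\beta}e^{-\kappa t}$, so taking $\sup_{t\geq 0}$ yields $\tri S(\cdot)g_{0}\tri_{0,0,\beta}\lesssim[[g_{0}]]_{\beta}$. For the inhomogeneous piece, the hypothesis \eqref{H1} provides $[[e^{\sigma x_{1}}H(\tau)]]_{\beta-1}\lesssim\delta$ uniformly in $\tau$, and the $\kappa=0$ bound \eqref{S*_beta2} of Lemma~\ref{lemS*} then gives $\tri S*(e^{\sigma x_{1}}H)\tri_{0,0,\beta}\lesssim\delta$.

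For the two nonlinear pieces I would first extract from Proposition~\ref{LGamma}(i) and (iv) the composite bilinear bound
\[
[[\Gamma(f,g)]]_{\beta-1}=\|\Gamma(f,g)\|_{\beta-1}+\|\Gamma(f,g)\|\lesssim[[f]]_{\beta}[[g]]_{\beta}\qquad(\beta>7/2),
\]
where the $\|\cdot\|_{\beta-1}$ part is obtained by multiplying through by $\nu(\xi)^{-1}\simeq\langle\xi\rangle^{-1}$ and applying the first inequality in (iv), while the $L^{2}$ part is the second inequality in (iv). Since $e^{-\sigma x_{1}}\leq 1$ and Proposition~\ref{ex-st} together with \eqref{U1} yield $[[\varphi_{R}\tilde f + U]]_{\beta}\lesssim\delta$, this specializes to
\[
[[e^{-\sigma x_{1}}\Gamma(\bar g(\tau))]]_{\beta-1}\lesssim[[\bar g(\tau)]]_{\beta}^{2},\qquad [[\Gamma(\varphi_{R}\tilde f+U,\bar g(\tau))]]_{\beta-1}\lesssim\delta\,[[\bar g(\tau)]]_{\beta}.
\]
Plugging these into \eqref{S*_beta2} and taking $\sup_{t\geq 0}$ bounds the two nonlinear convolutions in $\tri\cdot\tri_{0,0,\beta}$ by $\tri\bar g\tri_{0,0,\beta}^{2}$ and $\delta\tri\bar g\tri_{0,0,\beta}$, respectively; adding the four contributions gives \eqref{map1}.

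For \eqref{map2} I would use the bilinearity and symmetry of $\Gamma$ through the identity $\Gamma(\bar g)-\Gamma(\bar h)=\Gamma(\bar g+\bar h,\bar g-\bar h)$ (the second factor coming from $\Gamma(f,g)=\Gamma(g,f)$ in Proposition~\ref{LGamma}(iv)); this turns $\Phi[\bar g]-\Phi[\bar h]$ into $S*$ applied to $e^{-\sigma x_{1}}\Gamma(\bar g+\bar h,\bar g-\bar h)+2\Gamma(\varphi_{R}\tilde f+U,\bar g-\bar h)$, and the same chain of bounds followed by Lemma~\ref{lemS*} delivers the asserted Lipschitz estimate. The main obstacle is really weight bookkeeping: one has to confirm that the single power of $\langle\xi\rangle$ paid to the collision operator leaves an exponent $\beta-1>5/2$ compatible with every invoked lemma, and to verify that it is precisely the $L^{2}$ estimate in Proposition~\ref{LGamma}(iv)---whose validity requires $\beta>7/2$---that forces the weight assumption in the statement.
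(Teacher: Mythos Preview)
Your proposal is correct and follows essentially the same route as the paper: apply Lemmas~\ref{lem2} and~\ref{lemS} to the linear part $S(t)g_{0}$, apply the convolution estimate~\eqref{S*_beta2} from Lemma~\ref{lemS*} to the remaining source terms, and feed in the nonlinear bounds from Proposition~\ref{LGamma}(iv) together with~\eqref{U1}, \eqref{H1}, and Proposition~\ref{ex-st}. The paper's own proof is merely terser---it groups the three source terms and invokes the same lemmas in one line rather than treating them separately---and for~\eqref{map2} it just says ``much the same way'' where you spell out the polarization identity $\Gamma(\bar g)-\Gamma(\bar h)=\Gamma(\bar g+\bar h,\bar g-\bar h)$.
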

\begin{proof}
Let us show first \eqref{map1}. Use Lemmas \ref{lem2}--\ref{lemS*} to obtain
\begin{align*}
[[\Phi[\bar{g}](t)]]_{\beta} 
&\lesssim [[S(t)g_{0}]]_{\beta} + [[S*\left\{  e^{-\sigma x_{1}} \Gamma(\bar{g}) +  2\Gamma(\varphi_{R}\tilde{f}+U,\bar{g}) + e^{\sigma x_{1}}H \right\} ]]_{\beta} 
\\
&\lesssim [[g_{0}]]_{\beta} + \sup_{0 \leq \tau \leq t} [[ (e^{-\sigma x_{1}} \Gamma(\bar{g}) +  2\Gamma(\varphi_{R}\tilde{f}+U,\bar{g}) + e^{\sigma x_{1}}H) ]]_{\beta-1}.
\end{align*}
Then using \eqref{U1}, \eqref{H1}, Propositions \ref{ex-st} and \ref{LGamma}, we have 
\begin{equation*}
[[\Phi[\bar{g}](t)]]_{\beta} \lesssim [[g_{0}]]_{\beta}+\delta+(\delta+\tri\bar{g}\tri_{0,0,\beta})\tri\bar{g}\tri_{0,0,\beta},
\end{equation*}
which means \eqref{map1}. 
One can also show \eqref{map2} in much the same way as in the derivation of \eqref{map1}.
\end{proof}

We are now in a position to {{prove}} Theorem \ref{global1}.
\begin{proof}[Proof of Theorem \ref{global1}]
The inequality \eqref{map1} implies that there exist positive constants $\eta$ and $r$
such that if $[[g_{0}]]_{\beta}+\delta \leq \eta$, then $\Phi$ is a map from $B(r)$ to $B(r)$, where
\begin{equation*}
B(r):=\{ f \in X({0,0,\beta}) \, | \, \tri f \tri_{0,0,\beta} \leq r([[g_{0}]]_{\beta}+\delta)\}.
\end{equation*}
Furthermore, {{by taking $\eta$ small again if necessary, we see from {{the}} inequality \eqref{map2} that 
$\Phi$ is a contraction map over the Banach space $B(r)$}}. 
Therefore we have a unique fixed point $g=\Phi[g]$ in $B(r)$, {{which}} is a desired mild solution to the problem \eqref{pe0}.
\end{proof}

\section{Time-periodic and stationary solutions}\label{sec6}

In this section, we prove Theorems \ref{th4} and \ref{th5} which state
the unique existence and asymptotic stability of 
time-periodic and stationary solutions of equation \eqref{pe1}.
Let us mention the definitions of those solutions in the sense of distribution. 

\begin{definition}
We say that $g^{*}$ is a time-periodic solution with a period $T^{*}>0$ 
if it satisfies the following:
\begin{enumerate}[(i)]
\item $g^{*} \in X({-\infty,0,\beta})$.
\item $g^{*}(t+T^{*},x,\xi)=g^{*}(t,x,\xi)$ for $(t,x,\xi) \in \mathbb R \times {\mathbb R^{3}_{+}}\times \mathbb R^{3}$.
\item For any $\phi \in  \{ f \in C_{0}^{1}(\mathbb R \times \overline{\mathbb R^{3}_{+}}\times \mathbb R^{3}) \ | \  \text{$f(t,0,x',\xi)=0$ for $\xi_{1}<0$}\}$, it holds that
\begin{gather}\label{weak1}
\begin{aligned}
& (g^{*},\partial_{t}\phi)_{L^{2}_{t,x,\xi}} + ( g^{*}, \xi \cdot \nabla_{x} \phi)_{L^{2}_{t,x,\xi}} + (\sigma \xi_{1} g^{*} + Lg^{*}, \phi)_{L^{2}_{t,x,\xi}} 
\\
&\quad = - (e^{-\sigma x_{1}} \Gamma(g^{*}) +  2\Gamma(\varphi_{R}\tilde{f}+U,g^{*}) + e^{\sigma x_{1}}H,\phi)_{L^{2}_{t,x,\xi}}.
\end{aligned}
\end{gather}
\end{enumerate}
\end{definition}

\begin{definition}\label{DefS}
We say that $g^{s}$ is a stationary solution if it satisfies the following:
\begin{enumerate}[(i)]
\item $g^{s} \in X({-\infty,0,\beta})$.
\item For any $\phi \in  \{ f \in C_{0}^{1}(\overline{\mathbb R^{3}_{+}}\times \mathbb R^{3}) \ | \  \text{$f(0,x',\xi)=0$ for $\xi_{1}<0$}\}$, it holds that
\begin{gather}\label{weak2}
\begin{aligned}
& ( g^{s}, \xi \cdot \nabla_{x} \phi)_{L^{2}} + (\sigma \xi_{1} g^{s} + Lg^{s}, \phi)_{L^{2}} 
\\
&= - (e^{-\sigma x_{1}} \Gamma(g^{s}) +  2\Gamma(\varphi_{R}\tilde{f}+U,g^{s})  + e^{\sigma x_{1}}H,\phi)_{L^{2}}.
\end{aligned}
\end{gather}
\end{enumerate}
\end{definition}

\subsection{Uniqueness of time-periodic solutions}\label{sec6.1}
We first study the uniqueness of time-periodic solutions.
\begin{proposition}\label{tps1}
Suppose that the same assumptions as in Theorem \ref{th5} hold.
There exists a constant $\eta_{0}>0$ independent of $T^{*}$ 
such that if a time-periodic solution $g^{*}$ with the period $T^*>0$ satisfies 
the following inequality, then it is unique:
\begin{equation}\label{unies1}
\delta+\tri g^{*} \tri_{-\infty,0,\beta} \leq \eta_{0}.
\end{equation}
\end{proposition}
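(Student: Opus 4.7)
Let $g_1^*$ and $g_2^*$ be two time-periodic solutions of period $T^*$ both satisfying \eqref{unies1}, and set $w := g_1^* - g_2^*$. Subtracting the weak formulations \eqref{weak1} for the two solutions and using the symmetry of $\Gamma$ from Proposition~\ref{LGamma}(iv) together with the bilinear identity $\Gamma(g_1^*)-\Gamma(g_2^*)=\Gamma(g_1^*+g_2^*, w)$, I find that $w$ is a $T^*$-time-periodic distributional solution of
$$\partial_t w + \xi\cdot\nabla_x w - \sigma\xi_1 w - Lw = N,\qquad w(t,0,x',\xi)=0 \text{ for } \xi_1>0,\ w\to 0 \text{ as } x_1\to\infty,$$
where $N := e^{-\sigma x_1}\Gamma(g_1^*+g_2^*, w) + 2\Gamma(\varphi_R\tilde f + U, w)$.

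Next I would convert this into a mild-solution representation valid on all of $\mathbb{R}$. For any $t_0\le t$, the linear Cauchy problem above admits a unique distributional solution which coincides with the mild solution (by the fixed-point argument of Theorem~\ref{global1} specialized to the linearized equation), so
$$w(t) = S(t-t_0)w(t_0) + \int_{t_0}^t S(t-s)N(s)\,ds.$$
Time-periodicity of $w$ makes $[[w(t_0)]]_\beta$ uniformly bounded in $t_0$, and Lemma~\ref{lemS} then gives $\|S(t-t_0)w(t_0)\|_\beta \lesssim [[w(t_0)]]_\beta e^{-\kappa(t-t_0)}\to 0$ as $t_0\to -\infty$ (with the analogous $L^2$ bound). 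Sending $t_0\to -\infty$ produces
$$w(t) = \int_{-\infty}^{t} S(t-s) N(s)\,ds.$$

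The final step is to close the estimate by means of Lemma~\ref{lemS*}. By Proposition~\ref{LGamma}(iv), \eqref{U1}, and Proposition~\ref{ex-st},
$$[[N(\tau)]]_{\beta-1} \lesssim \bigl(\delta + \tri g_1^*\tri_{-\infty,0,\beta} + \tri g_2^*\tri_{-\infty,0,\beta}\bigr)[[w(\tau)]]_\beta \lesssim \eta_0\,\tri w\tri_{-\infty,0,\beta}.$$
Since $[[N(\tau)]]_{\beta-1}$ is uniformly bounded in $\tau$, the weighted supremum in Lemma~\ref{lemS*} satisfies $\sup_{t_0\le\tau\le t} e^{\kappa\tau/2}[[N(\tau)]]_{\beta-1} \le e^{\kappa t/2}\sup_\tau[[N(\tau)]]_{\beta-1}$, so taking $t_0\to -\infty$ in Lemma~\ref{lemS*} yields
$$[[w(t)]]_\beta \le C_0 \sup_\tau [[N(\tau)]]_{\beta-1} \le C\eta_0\,\tri w\tri_{-\infty,0,\beta}.$$
Taking the supremum over $t\in\mathbb R$ gives $\tri w\tri_{-\infty,0,\beta} \le C\eta_0\,\tri w\tri_{-\infty,0,\beta}$, and choosing $\eta_0$ small enough (with $\eta_0$ depending only on the constants in Proposition~\ref{LGamma}, Lemmas~\ref{lemS} and \ref{lemS*}, hence independent of $T^*$) forces $w\equiv 0$.

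\textbf{Main obstacle.} The delicate step is the passage $t_0\to -\infty$ in the Duhamel formula: one must first identify the distributional solution with the mild solution of the linearized problem, and then exploit time-periodicity so that the uniform bound on $[[N(\tau)]]_{\beta-1}$ compensates the growing weight $e^{\kappa\tau/2}$ appearing in Lemma~\ref{lemS*}. Once these two ingredients are in place, the rest is a standard absorption argument, and the $T^*$-independence of the threshold $\eta_0$ follows because none of the invoked constants depend on the period.
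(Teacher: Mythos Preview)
Your argument is correct, but it follows a genuinely different route from the paper's proof. The paper argues by the \emph{energy method}: it tests the equation for $\overline h=g^*-g^\#$ against $\overline h$ over one period, obtaining
\[
I_1:=-\sigma\int_0^{T^*}\!\!\!\int\!\!\!\int \xi_1\overline h^2 - \int_0^{T^*}\!\!\!\int\!\!\!\int \overline h\,L\overline h
\ \le\ \int_0^{T^*}\!\!\!\int\!\!\!\int \bigl\{e^{-\sigma x_1}(\Gamma(g^*)-\Gamma(g^\#))+2\Gamma(\varphi_R\tilde f+U,\overline h)\bigr\}\overline h,
\]
then shows the coercivity $\|\langle\xi\rangle^{1/2}\overline h\|_{L^2_{t,x,\xi}}^2\lesssim I_1$ by combining Proposition~\ref{LGamma}(ii) with the negative-definiteness of $A=P\xi_1P$ in Proposition~\ref{LGamma}(v), and bounds the right-hand side by $C\eta_0\|\langle\xi\rangle^{1/2}\overline h\|_{L^2_{t,x,\xi}}^2$ via the trilinear $L^1$ estimate in Proposition~\ref{LGamma}(iv). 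The absorption then closes in the space--time $L^2$ norm.

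Your approach instead reuses the semigroup machinery of Section~\ref{sec4}: you write $w$ in Duhamel form, send the initial time to $-\infty$ using the decay of $S(t)$ from Lemmas~\ref{lem2} and~\ref{lemS}, and then close the contraction in $\tri\cdot\tri_{-\infty,0,\beta}$ via Lemma~\ref{lemS*} and the first two estimates of Proposition~\ref{LGamma}(iv). This is more economical in that it recycles estimates already proved for existence, and it never needs to invoke Proposition~\ref{LGamma}(v) or the trilinear $L^1$ bound directly (they are hidden inside Lemma~\ref{lem2}). The paper's route, by contrast, is self-contained at the level of the weak formulation and does not rely on the $L^\infty_\beta$ theory of Section~\ref{sec4.1} at all. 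The one point you should make fully rigorous is the identification of the distributional solution $w$ with a mild solution so that the Duhamel representation is valid; this is standard for $C_0$-semigroups (e.g.\ via Ball's characterization) but deserves a line of justification given that time-periodic solutions are defined through the weak form~\eqref{weak1}.
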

\begin{proof}
Let $g^{*}$ and $g^{\#}$ be time-periodic solutions.
Then $\overline{h} :=g^{*}-g^{\#}$ satisfies
\begin{gather*}
\begin{aligned}
& (\overline{h} ,\partial_{t} \phi)_{L^{2}_{t,x,\xi}} + ( \overline{h} , \xi \cdot \nabla_{x} \phi)_{L^{2}_{t,x,\xi}} + (\sigma \xi_{1} \overline{h}  + L\overline{h} , \phi)_{L^{2}_{t,x,\xi}} 
\\
&\quad = - (e^{-\sigma x_{1}} \{\Gamma(g^{*}) - \Gamma(g^{\#})\} +  2\Gamma(\varphi_{R}\tilde{f}+U,\overline{h}),\phi)_{L^{2}_{t,x,\xi}}.
\end{aligned}
\end{gather*}
By the energy method,  we have
\begin{align}\label{unies2}
I_{1} &\leq  \int_{0}^{T^{*}}\!\!\int_{\mathbb{R}^3_{+}} \!\! \int_{\mathbb{R}^3}  \left\{e^{-\sigma x_{1}} (\Gamma(g^{*})-\Gamma(g^{\#})) + 2\Gamma(\varphi_{R}\tilde{f}+U,\overline{h} ) \right\} \overline{h}  dxd\xi dt,
\\
I_{1}&:=- \sigma  \int_{0}^{T^{*}}\!\!\int_{\mathbb{R}^3_{+}} \!\! \int_{\mathbb{R}^3}  \xi_1 \overline{h} ^2  dxd\xi dt
- \int_{0}^{T^{*}}\!\!\int_{\mathbb{R}^3_{+}} \!\! \int_{\mathbb{R}^3}  \overline{h}  L \overline{h}  dxd\xi dt,
\notag
\end{align}
where we have used the periodicity of $\overline{h}$
and also neglected the good contribution on the boundary term.

Let us show the lower bound of $I_{1}$ as
\begin{equation}\label{unies10}
\|  \langle \xi\rangle^{1/2} \overline{h} \|_{L^{2}_{t,x, \xi}}^{2}   \lesssim I_{1}.
\end{equation}
Decomposing $L= -\nu(\xi) +K$,  using Proposition \ref{LGamma}, and taking $\sigma \leq \nu_0 /2$,  we bound $I_{1}$ as
\begin{align} \label{unies5}
I_{1} \geq  \frac{\nu_0}{2} \|\langle \xi \rangle^{1/2}\overline{h} \|_{L^2_{t, x, \xi}}^{2}  - C \|\overline{h} \|_{L^2_{t, x, \xi}}^{2}. 
\end{align}
We complete the derivation of \eqref{unies10} by showing 
that $I_{1}$ is also bound from below by
$c \|\overline{h} \|_{L^2_{t, x, \xi}}^{2}$.
For the first term of $I_{1}$, we observe that 
\begin{align}
& - \sigma \int_{0}^{T^{*}}\!\! \int_{\mathbb{R}^3_{+}} \!\! \int_{\mathbb{R}^3} \xi_1 \overline{h}^2 dxd\xi dt
\notag \\
&=-\sigma (P \xi_1 PP\overline{h}, P\overline{h})_{L^2_{t,x,\xi}}
-2\sigma (P\xi_1 (I-P)\overline{h}, P\overline{h})_{L^2_{t,x,\xi}}
-\sigma (\xi_1 (I-P)\overline{h}, (I-P)\overline{h})_{L^2_{t,x,\xi}} 
\notag \\ 
&\ge \sigma(-AP\overline{h}, P\overline{h})_{L^2_{t,x,\xi}}
-2\sigma\Vert (I-P) \overline{h}\Vert_{L^2_{t,x,\xi}}\Vert P \overline{h}\Vert_{L^2_{t,x,\xi}}
-\sigma  \Vert \xi_1^{1/2} (I-P) \overline{h} \Vert^2_{L^2_{t,x,\xi}}
\notag \\
&\ge  \nu_{2} \sigma \Vert P \overline{h} \Vert^2_{L^2_{t,x,\xi}} 
-\sigma \epsilon\Vert P \overline{h}\Vert^2_{L^2_{t,x,\xi}}
-\dfrac{\sigma}{\epsilon}\Vert  (I-P) \overline{h}\Vert^2_{L^2_{t,x,\xi}}
-\sigma  \Vert \xi_1^{1/2} (I-P)\overline{h}\Vert^2_{L^2_{t,x,\xi}},
\label{unies3}
\end{align}
where we have used Proposition \ref{LGamma} (vi) and (v) in deriving the first and second inequality, respectively,
and $\epsilon$ is a positive constant. 
For the second term of $I_{1}$, using Proposition \ref{LGamma} (ii), we obtain
\begin{align*} 
- \int_{0}^{T^{*}}\!\!\int_{\mathbb{R}^3_{+}} \!\! \int_{\mathbb{R}^3}  \overline{h} L \overline{h} dxd\xi dt 
& \geq \nu_{1}  \| \langle \xi \rangle^{1/2} (I - P) \overline{h} \|_{L^{2}_{t, x, \xi}}^{2}. 
\end{align*}
Combining this and \eqref{unies3}, we bound $I_{1}$ from below as
\begin{align} \label{unies6}
I_{1} &\geq  \nu_{2} \sigma \Vert P \overline{h} \Vert^2_{L^2_{t, x,\xi}} 
-\sigma \epsilon \Vert P \overline{h}\Vert^2_{L^2_{t, x,\xi}}
-\dfrac{\sigma}{\epsilon}\Vert  (I-P) \overline{h}\Vert^2_{L^2_{t, x,\xi}}
-\sigma  \Vert \xi_1^{1/2} (I-P)\overline{h}\Vert^2_{L^2_{t, x,\xi}}
\notag \\
& \quad + \nu_{1} \|\langle \xi\rangle^{1/2} (I - P) \overline{h} \|_{L^{2}_{t, x, \xi}}^{2} 
\notag \\
& \geq  \frac{\nu_{2} \sigma}{2}  \Vert P \overline{h} \Vert^2_{L^2_{t, x,\xi}}
+\frac{\nu_{1}}{2} \|\langle \xi\rangle^{1/2} (I - P) \overline{h} \|_{L^{2}_{t, x, \xi}}^{2} 
\end{align}
where we have also let $\epsilon=\nu_{2}/2$ and $ \sigma \leq \frac{\nu_{1}}{2}(1+2/\nu_{2})^{-1}$
in deriving the second inequality. 
From \eqref{unies5} and \eqref{unies6}, we deduce \eqref{unies10}.

Using \eqref{U1}, \eqref{unies1}, Proposition \ref{ex-st},  and Proposition \ref{LGamma} (iv), 
one can estimate the right  hand side of \eqref{unies2} as
\begin{align*}
\text{(R.H.S.)} 
& \lesssim  \int_{0}^{T^{*}}\!\! \|\langle \xi \rangle^{1/2}  (\varphi_{R}\tilde{f}+U)\|_{L^{\infty}_{x}L^{2}_{\xi}}\|\langle \xi \rangle^{1/2}\overline{h}\| \|\langle \xi \rangle^{1/2}\overline{h} \| dt 
\notag \\
&\quad +  \int_{0}^{T^{*}}\!\! (\|\langle \xi \rangle^{1/2} g^{*}\|_{L^{\infty}_{x}L^{2}_{\xi}} + \|\langle \xi \rangle^{1/2} g^{\#}\|_{L^{\infty}_{x}L^{2}_{\xi}})
\|\langle \xi \rangle^{1/2} \overline{h}\| \|\langle \xi \rangle^{1/2}\overline{h} \|  dt 
\notag \\
& \lesssim \int_{0}^{T^{*}}\!\! \|\langle \xi \rangle^{1/2}  (\varphi_{R}\tilde{f}+U)\|_{\beta}\|\langle \xi \rangle^{1/2}\overline{h}\| \|\langle \xi \rangle^{1/2}\overline{h} \|  dt 
\notag \\
&\quad +  \int_{0}^{T^{*}}\!\! (\|\langle \xi \rangle^{1/2} g^{*}\|_{\beta} + \|\langle \xi \rangle^{1/2} g^{\#}\|_{\beta})
\|\langle \xi \rangle^{1/2} \overline{h}\| \|\langle \xi \rangle^{1/2}\overline{h} \|  dt 
\notag \\
& \lesssim  \eta_0 \|\langle \xi \rangle^{1/2} \overline{h} \|_{L^{2}_{t, x, \xi}}^{2},
\end{align*}
where we have also used the fact $\|\cdot\|_{L^{\infty}_{x}L^{2}_{\xi}} \lesssim \|\cdot\|_{\beta}$.
{Substituting} this and \eqref{unies10} into \eqref{unies2} yields
$\|\langle \xi \rangle^{1/2}\overline{h}\|_{L^{2}_{t,x,\xi}}
\lesssim \eta_{0} \|\langle \xi \rangle^{1/2}\overline{h}\|_{L^{2}_{t,x,\xi}}$.
Letting $\eta_{0}$ be small, we conclude that
$\|\langle \xi \rangle^{1/2}(g^{*}-g^{\#})\|_{L^{2}_{t,x,\xi}}=0$, which means $g^{*}=g^{\#}$.
\end{proof}

\subsection{Existence of time-periodic solutions}\label{sec6.2}
For the construction of time-periodic solutions, 
we make use of the mild solution $g$ in Theorem \ref{global1}. 
Now we define
\begin{equation}\label{def1}
 g_{k}(t,x,\xi):=g(t+kT^*,x,\xi)
\quad \text{for $k=0,1,2,\ldots$.} 
\end{equation}
\begin{lemma}\label{apes}
There {exist} constants $\eta_{0}>0$  and $C_0>0$  independent of $k$ and $T^{*}$ 
such that if $[[g_{0}]]_{\beta}+\delta \leq \eta_{0}$, 
\begin{equation}\label{exiapes0}
\tri g_{l}-g_{k} \tri_{-kT^{*},\kappa/2,\beta}  \leq C_0 e^{-\kappa kT^{*}/2}
\quad \text{for $l >k$},
\end{equation}
where $\kappa$ is a positive constant being in Lemma \ref{lem2}.
\end{lemma}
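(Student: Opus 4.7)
The plan is to set up a Duhamel representation for the difference $w_{l,k} := g_l - g_k$ on the interval $[-kT^{*},\infty)$ and then to combine the linear decay estimates of Section~\ref{sec4} with the bilinear bound (iv) of Proposition~\ref{LGamma}. Since $U$ and $H$ are $T^*$-periodic, the mild formulation of $g$ shifted by $kT^*$ gives
\begin{equation*}
g_k(t) = S(t+kT^*)g_0 + \int_{-kT^*}^{t} S(t-\tau)\bigl\{ e^{-\sigma x_1}\Gamma(g_k) + 2\Gamma(\varphi_R\tilde{f}+U, g_k) + e^{\sigma x_1}H \bigr\}(\tau)\,d\tau.
\end{equation*}
Subtracting the analogous formulas for $g_l$ and $g_k$, the inhomogeneous term $e^{\sigma x_1}H$ cancels, the terms involving $U$ combine linearly, and I obtain
\begin{equation*}
w_{l,k}(t) = S(t+kT^*)\bigl(g((l-k)T^*) - g_0\bigr) + \int_{-kT^*}^{t} S(t-\tau)\tilde F(\tau)\,d\tau,
\end{equation*}
where $\tilde F = e^{-\sigma x_1}\bigl(\Gamma(w_{l,k},g_l)+\Gamma(g_k,w_{l,k})\bigr) + 2\Gamma(\varphi_R\tilde{f}+U, w_{l,k})$.

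For the homogeneous part I would apply Lemma~\ref{lemS} to obtain
\begin{equation*}
[[S(t+kT^*)\bigl(g((l-k)T^*)-g_0\bigr)]]_\beta \lesssim \bigl([[g((l-k)T^*)]]_\beta + [[g_0]]_\beta\bigr)\, e^{-\kappa(t+kT^*)},
\end{equation*}
and bound the prefactor by $C\tri g\tri_{0,0,\beta} \lesssim [[g_0]]_\beta + \delta \lesssim \eta_{0}$ via Theorem~\ref{global1}. For the convolution part, Lemma~\ref{lemS*} with $t_0 = -kT^*$ gives
\begin{equation*}
\Bigl[\!\!\Bigl[ \int_{-kT^*}^{t} S(t-\tau)\tilde F(\tau)\,d\tau \Bigr]\!\!\Bigr]_\beta \leq C\, e^{-\kappa t/2} \sup_{-kT^* \leq \tau \leq t} e^{\kappa\tau/2}\, [[\tilde F(\tau)]]_{\beta-1},
\end{equation*}
and part (iv) of Proposition~\ref{LGamma} together with \eqref{U1} and Proposition~\ref{ex-st} yields
\begin{equation*}
[[\tilde F(\tau)]]_{\beta-1} \lesssim \bigl(\delta + [[g_l(\tau)]]_\beta + [[g_k(\tau)]]_\beta\bigr)[[w_{l,k}(\tau)]]_\beta \lesssim \eta_{0}\, [[w_{l,k}(\tau)]]_\beta.
\end{equation*}

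Multiplying the resulting pointwise bound by $e^{\kappa t/2}$ and using the elementary inequality $e^{-\kappa t/2-\kappa kT^*} \leq e^{-\kappa kT^*/2}$ valid for every $t \geq -kT^*$, I arrive at
\begin{equation*}
e^{\kappa t/2}[[w_{l,k}(t)]]_\beta \leq C\eta_{0}\, e^{-\kappa kT^*/2} + C\eta_{0}\, \tri w_{l,k}\tri_{-kT^*,\kappa/2,\beta}.
\end{equation*}
Taking the supremum over $t\geq -kT^*$ and shrinking $\eta_{0}$ so that $C\eta_{0}\leq 1/2$ absorbs the second term, yielding $\tri w_{l,k}\tri_{-kT^*,\kappa/2,\beta} \leq C_0 e^{-\kappa kT^*/2}$ as claimed.

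The main obstacle is extracting the decay factor $e^{-\kappa kT^*/2}$: the initial datum $g((l-k)T^*)-g_0$ is only uniformly bounded of order $\eta_{0}$, not exponentially small, so the whole gain must come from the linear decay $e^{-\kappa(t+kT^*)}$ of $S(\cdot)$ across the long interval $[-kT^*,t]$. Balancing this decay against the $e^{\kappa t/2}$ weight defining $\tri\cdot\tri_{-kT^*,\kappa/2,\beta}$ forces the rate to be $\kappa/2$ (not $\kappa$) and is precisely what produces the prefactor $e^{-\kappa kT^*/2}$; it also explains why one loses a factor of two in the exponent when passing from the semigroup decay to the convolution estimate.
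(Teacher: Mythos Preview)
Your proof is correct and follows essentially the same strategy as the paper: Duhamel representation, the semigroup decay of Lemmas~\ref{lem2}--\ref{lemS} for the source part, the convolution estimate of Lemma~\ref{lemS*} for the nonlinear part, and an absorption argument after multiplying by $e^{\kappa t/2}$. The only difference is organizational: the paper keeps the terms $S(t+lT^*)g_0-S(t+kT^*)g_0$ and the extra integral $\int_{-lT^*}^{-kT^*}S(t-s)\{\cdots\}\,ds$ separate (its estimates \eqref{exiapes1}--\eqref{exiapes2}), whereas you recognize via the semigroup property that these combine into $S(t+kT^*)g((l-k)T^*)$, giving the single homogeneous term $S(t+kT^*)\bigl(g((l-k)T^*)-g_0\bigr)$, which is slightly cleaner but leads to the same bound.
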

\begin{proof}
We first observe by using $(U, H)(t+T^{*},x,\xi)=(U, H)(t,x,\xi)$ that 
\begin{align}
g_{l}-g_{k}
&=S(t+lT^{*})g_{0}-S(t+kT^{*})g_{0}
\notag \\
&\quad +\int_{0}^{t+lT^{*}} S(t+lT^{*}-s)\left\{  e^{-\sigma x_{1}} \Gamma(g) +  2\Gamma(\varphi_{R}\tilde{f}+U,g) + e^{\sigma x_{1}}H \right\}(s) ds
\notag \\
&=S(t+lT^{*})g_{0}-S(t+kT^{*})g_{0}
\notag \\
&\quad +\int_{-lT^{*}}^{-kT^{*}} S(t-s)\left\{  e^{-\sigma x_{1}} \Gamma(g_{l}) +  2\Gamma(\varphi_{R}\tilde{f}+U,g_{l}) + e^{\sigma x_{1}}H \right\}(s) ds
\notag \\
&\quad +\int_{-kT^{*}}^{t} S(t-s)\left\{  e^{-\sigma x_{1}} (\Gamma(g_{l}) - \Gamma(g_{k})) +  2\Gamma(\varphi_{R}\tilde{f}+U,g_{l}-g_{k})\right\}(s)ds.
\label{diff1}
\end{align}
The first three terms of the right hand side are estimated 
by using \eqref{U1}, \eqref{H1}, Propositions \ref{ex-st} and \ref{LGamma},  
Lemmas \ref{lem2} and \ref{lemS}, and Theorem \ref{global1} as 
\begin{gather}
[[S(t+lT^{*})g_{0}]]_{\beta}+[[S(t+kT^{*})g_{0}]]_{\beta} \leq C_{0} e^{-\kappa(t+kT^{*})},
\label{exiapes1} \\
\left[\left[
\int_{-lT^{*}}^{-kT^{*}} S(t-s)\left\{  e^{-\sigma x_{1}} \Gamma(g_{l}) +  2\Gamma(\varphi_{R}\tilde{f}+U,g_{l}) + e^{\sigma x_{1}}H \right\}(s) ds \right]\right]_{\beta} \leq C_{0} e^{-\kappa(t+kT^{*})},
\label{exiapes2}
\end{gather}
where $C_{0}$ is a positive constant independent of $k$ and $T^{*}$.
Let us also estimate the rightmost by using Lemma \ref{lemS*} as 
\begin{align}
&\left[\left[\int_{-kT^{*}}^{t} S(t-s)\left\{  e^{-\sigma x_{1}} (\Gamma(g_{l}) - \Gamma(g_{k})) +  2\Gamma(\varphi_{R}\tilde{f}+U,g_{l}-g_{k})\right\}(s)ds\right]\right]_{\beta} 
\notag \\
&= [[S*_{-kT^{*}}\left\{  e^{-\sigma x_{1}} (\Gamma(g_{l},g_{l}-g_{k}) + \Gamma(g_{l}-g_{k},g_{k})) +  2\Gamma(\varphi_{R}\tilde{f}+U,g_{l}-g_{k})\right\}  ]]_{\beta} 
\notag \\
&\lesssim e^{-\kappa t/2} \sup_{-kT^{*} \leq \tau \leq t} e^{\kappa \tau/2}[[(e^{-\sigma x_{1}} (\Gamma(g_{l},g_{l}-g_{k}) + \Gamma(g_{l}-g_{k},g_{k})) +  2\Gamma(\varphi_{R}\tilde{f}+U,g_{l}-g_{k}))(\tau) ]]_{\beta-1} 
\notag \\
&\lesssim e^{-\kappa t/2} (\delta+\tri g_{l}\tri_{-kT^*,0,\beta}+\tri g_{k}\tri_{-kT^*,0,\beta})\tri g_{l}-g_{k}\tri_{-kT^*,\kappa/2,\beta}
\notag \\
&\lesssim \eta e^{-\kappa t/2}  \tri g_{l}-g_{k}\tri_{-kT^*,\kappa/2,\beta} .
\label{exiapes4}
\end{align}
Take the norm $[[\ \cdot \ ]]_{\beta}$ of \eqref{diff1} and multiply the result by $e^{\kappa t/2}$.
Then using \eqref{exiapes1}--\eqref{exiapes4} and taking $\eta$ small enough, 
we arrive at \eqref{exiapes0}.
\end{proof}

We are now in a position to show the existence of time-periodic solutions.

\begin{proposition}\label{tps2}
Suppose that the same assumptions as in Theorem \ref{th5} hold.
There {exist} constants $\eta>0$  and $C_0>0$  independent of $T^{*}$ 
such that if $\delta \leq \eta$, we have a time-periodic solution $g^{*}$
with a period $T^{*}$.
Furthermore, it satisfies $\tri g^{*}\tri_{-\infty,0,\beta} \leq C_{0}\delta$.
\end{proposition}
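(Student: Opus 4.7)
The plan is to construct the time-periodic solution $g^*$ as the limit of the shifted sequence $\{g_k\}_{k\ge 0}$ from \eqref{def1}. First I would choose a small initial datum, say $g_0 \equiv 0$, which trivially satisfies $[[g_0]]_\beta + \delta \leq \eta$ whenever $\delta \leq \eta$, and invoke Theorem \ref{global1} to obtain the global mild solution $g \in X(0,0,\beta)$ with $\tri g\tri_{0,0,\beta} \leq C\delta$. Since $g_k(\tau) = g(\tau + kT^*)$, one immediately has the uniform bound $\tri g_k \tri_{-kT^*,0,\beta} = \tri g \tri_{0,0,\beta} \leq C\delta$, independent of $k$.

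Next I would use Lemma \ref{apes} to show that $\{g_k\}$ is Cauchy in a sense strong enough to produce a limit in $X(-\infty,0,\beta)$. Rewriting its conclusion as
\begin{equation*}
[[g_l(\tau) - g_k(\tau)]]_\beta \leq C_0 e^{-\kappa(\tau + kT^*)/2} \quad \text{for all } l>k \text{ and } \tau \geq -kT^*,
\end{equation*}
one sees that for every fixed $\tau$ the sequence $\{g_k(\tau)\}$ is Cauchy in $L^2 \cap L^\infty_\beta$, uniformly on any half-line $[-N,\infty)$ once $kT^* \geq N$. Call the limit $g^*(\tau)$. Letting $l \to \infty$ in the Cauchy bound yields $[[g^*(\tau) - g_k(\tau)]]_\beta \leq C_0 e^{-\kappa(\tau+kT^*)/2}$; combining this with the uniform bound on $g_k$ and then sending $k\to\infty$ at each fixed $\tau$ gives $\tri g^*\tri_{-\infty,0,\beta} \leq C_0\delta$. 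Periodicity is automatic because $g_k(t+T^*) = g_{k+1}(t)$, so passing to the limit in $k$ yields $g^*(t+T^*) = g^*(t)$.

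It remains to verify that $g^*$ is a time-periodic solution in the weak sense of \eqref{weak1}. Starting from the mild identity
\begin{equation*}
g_k(t) = S(t + kT^*)g_0 + \int_{-kT^*}^t S(t-s)\left\{ e^{-\sigma x_1}\Gamma(g_k) + 2\Gamma(\varphi_R \tilde f + U, g_k) + e^{\sigma x_1}H \right\}(s)\, ds,
\end{equation*}
I would test against any admissible $\phi$ with compact $t$-support and pass to the limit $k \to \infty$. The free piece $S(t+kT^*)g_0$ decays like $e^{-\kappa(t+kT^*)}$ by Lemmas \ref{lem2} and \ref{lemS} and therefore vanishes. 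The linear-in-$g_k$ contributions converge to their $g^*$-counterparts directly by the strong convergence on the support of $\phi$. For the quadratic piece one writes $\Gamma(g_k) - \Gamma(g^*) = \Gamma(g_k - g^*, g_k) + \Gamma(g^*, g_k - g^*)$ and bounds each factor using Proposition \ref{LGamma}(iv) together with the uniform bounds and the vanishing of $[[g_k - g^*]]_\beta$.

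The main obstacle is exactly this last convergence, because the bilinear bound on $\Gamma$ in Proposition \ref{LGamma}(iv) only closes in the combined norm $\|\cdot\| + \|\cdot\|_\beta = [[\cdot]]_\beta$, not in $\|\cdot\|_\beta$ alone; fortunately this is precisely the strength of the Cauchy control delivered by Lemma \ref{apes}, so the passage to the limit goes through. Uniqueness of the constructed $g^*$ (and hence the fact that the same limit is obtained irrespective of the auxiliary choice $g_0 \equiv 0$) follows from Proposition \ref{tps1} once $\eta$ is taken smaller than $\eta_0$ there.
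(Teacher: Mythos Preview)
Your proposal is correct and follows essentially the same route as the paper: build $g_k$ from the global mild solution of Theorem~\ref{global1}, use Lemma~\ref{apes} to get a Cauchy sequence converging to $g^*$, read off periodicity from $g_k(t+T^*)=g_{k+1}(t)$, and pass to the limit in the weak formulation. The only cosmetic difference is that the paper writes the distributional identity for $g_k$ directly (mild solutions being distributional solutions) and takes the limit term by term, whereas you start from the mild Duhamel formula before testing; also, with your choice $g_0\equiv 0$ the free piece $S(t+kT^*)g_0$ is identically zero, so invoking its decay is unnecessary.
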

\begin{proof}
We take the mild solution $g$ for the initial data $g_{0}$ satisfying $[[g_{0}]]_{\beta} \lesssim \delta$, which is established in Theorem \ref{global1}, 
and define $g_{k}$ as \eqref{def1}.
From Lemma \ref{apes}, there exists $g^{*} \in X(-\infty,0,\beta)$ such that
\begin{equation}\label{convergent1}
g_{k} \to g^{*}  \quad \text{in  $L^{\infty}_{\rm loc}(\mathbb R; L^{\infty}_{\beta}) \cap C_{\rm loc}(\mathbb R; L^{2})$}.
\end{equation}
Furthermore we have $g_{k}(t+T^{*},x,\xi)=g_{k+1}(t,x,\xi)$ from the definition of $g_{k}$ 
and hence it holds that $g^{*}(t+T^{*},x,\xi)=g^{*}(t,x,\xi)$.
This periodicity together with \eqref{convergent1} and Theorem \ref{global1} ensure that
$\tri g^{*}\tri_{-\infty,0,\beta} \leq C_{0}\delta$, where $C_{0}$ is a positive constant independent of $T^{*}$.

What is left is to check that $g^{*}$ satisfies the weak form \eqref{weak1}.
Taking $k$ large according to a fixed $\phi \in C_{0}^{1}(\mathbb R \times \overline{\mathbb R^{3}_{+}}\times \mathbb R^{3})$ with $\phi(t,0,x',\xi)=0$ if $\xi_{1}<0$,  we see that $g_{k}$ satisfies 
\begin{align*}
& (g_{k},\partial_{t}\phi)_{L^{2}_{t,x,\xi}} + ( g_{k}, \xi \cdot \nabla_{x} \phi)_{L^{2}_{t,x,\xi}} + (\sigma \xi_{1} g_{k} + Lg_{k}, \phi)_{L^{2}_{t,x,\xi}} 
\notag \\
&\ \ = - (e^{-\sigma x_{1}} \Gamma(g_{k}) +  2\Gamma(\varphi_{R}\tilde{f}+U,g_{k}) + e^{\sigma x_{1}}H,\phi)_{L^{2}_{t,x,\xi}}.
\end{align*}
Now letting $k \to \infty$
and using \eqref{U1}, \eqref{H1}, \eqref{convergent1}, and  Propositions \ref{ex-st} and \ref{LGamma}, we obtain
\begin{align*}
& (g^{*},\partial_{t}\phi)_{L^{2}_{t,x,\xi}} + ( g^{*}, \xi \cdot \nabla_{x} \phi)_{L^{2}_{t,x,\xi}} + (\sigma \xi_{1} g^{*} + Lg^{*}, \phi)_{L^{2}_{t,x,\xi}} 
\notag \\
&\ \ = - (e^{-\sigma x_{1}} \Gamma(g^{*}) +  2\Gamma(\varphi_{R}\tilde{f}+U,g^{*}) + e^{\sigma x_{1}}H,\phi)_{L^{2}_{t,x,\xi}}.
\end{align*}
The proof is complete.
\end{proof}

\subsection{Stationary solutions}\label{sec6.3}
We show that the time-periodic solutions in Proposition \ref{tps2} are time-independent
if $U$ and $H$ are time-independent.
\begin{proposition}\label{ss1}
If $U$ and $H$ are time-independent, 
the time-periodic solution in Proposition \ref{tps2} is a stationary solution.
\end{proposition}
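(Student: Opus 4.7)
The plan is to exploit the time-translation invariance of problem \eqref{pe1} (which holds precisely because $U$ and $H$ are time-independent) together with the uniqueness statement of Proposition \ref{tps1}. Let $g^{*}$ denote the time-periodic solution with period $T^{*}$ constructed in Proposition \ref{tps2}, and for an arbitrary $s \in \mathbb R$ set
\begin{equation*}
g^{\#}(t,x,\xi) := g^{*}(t+s,x,\xi).
\end{equation*}
The goal is to show $g^{\#} = g^{*}$, which forces $g^{*}$ to be $t$-independent.

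First, I would observe that $g^{\#}$ again lies in $X(-\infty,0,\beta)$ with $\tri g^{\#}\tri_{-\infty,0,\beta} = \tri g^{*}\tri_{-\infty,0,\beta} \leq C_{0}\delta$, and that it is $T^{*}$-periodic because $g^{*}$ is. The substantive check is that $g^{\#}$ satisfies the weak form \eqref{weak1}. For any admissible test function $\phi$, I would test the weak form of $g^{*}$ against the shifted function $\phi(t-s,x,\xi)$ and then perform the change of variables $\tau := t+s$ in the time integration. The crucial point is that the hypotheses $U(t+s,x,\xi)=U(t,x,\xi)$ and $H(t+s,x,\xi)=H(t,x,\xi)$ leave every term on the right of \eqref{weak1} invariant under this translation, so the resulting identity is exactly \eqref{weak1} with $g^{*}$ replaced by $g^{\#}$.

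After shrinking $\eta$ if necessary so that $(1+C_{0})\delta \leq \eta_{0}$, with $\eta_{0}$ the threshold in Proposition \ref{tps1}, both $g^{*}$ and $g^{\#}$ are time-periodic solutions of period $T^{*}$ in the uniqueness regime. Hence $g^{\#} = g^{*}$, that is, $g^{*}(t+s,x,\xi) = g^{*}(t,x,\xi)$ for every $s \in \mathbb R$, so $g^{*}$ is time-independent. Once this is known, the $\partial_{t}\phi$ contribution in \eqref{weak1} vanishes (take test functions of product form $\psi(t)\phi(x,\xi)$ and use arbitrariness of $\psi$), leaving precisely the stationary weak form \eqref{weak2}. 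Thus $g^{*}$ is a stationary solution in the sense of Definition \ref{DefS}.

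Because the time translate automatically inherits the norm bound and periodicity from $g^{*}$, no new estimates are required: the only point that needs care is making sure the smallness threshold for applying Proposition \ref{tps1} is met by both $g^{*}$ and $g^{\#}$, which is arranged by the single adjustment of $\eta$ above. I therefore do not anticipate a substantial obstacle in this proof; the argument is structural rather than analytic.
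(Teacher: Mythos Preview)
Your argument is correct and is in fact more direct than the route the paper takes. The paper does \emph{not} use the time-shift $g^{\#}(t)=g^{*}(t+s)$; instead it exploits the fact that, when $U$ and $H$ are time-independent, Proposition \ref{tps2} furnishes a periodic solution $g_{l}^{*}$ of period $T^{*}/2^{l}$ for every $l\in\mathbb{N}$ under the same smallness hypothesis (since the thresholds in Propositions \ref{tps1} and \ref{tps2} are independent of the period). Each $g_{l}^{*}$ is in particular $T^{*}$-periodic, so the uniqueness proposition forces $g_{l}^{*}=g^{*}$, giving $g^{*}(0)=g^{*}(iT^{*}/2^{l})$ for all dyadic $i/2^{l}$; density in $[0,T^{*}]$ together with $g^{*}\in C(\mathbb{R};L^{2})$ then yields time-independence.

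Both proofs hinge on Proposition \ref{tps1} and on the autonomy of the problem; the difference is structural. Your translation argument needs only a single invocation of uniqueness and never reinvokes the existence result for other periods, whereas the paper's argument leans on the period-independence of the \emph{existence} threshold as well. On the other hand, the paper's approach makes no use of the fact that arbitrary real time-shifts preserve the weak formulation; it works entirely with the countable family of dyadic periods and the $L^{2}$-continuity already established. Either route closes the gap; yours is the shorter one.
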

\begin{proof}
Proposition \ref{tps2} ensures the existence of
time-periodic solutions $g^{*}$ for any period $T^*$, since $U$ and $H$ are time-independent.
We remark that the smallness assumption for $\delta$ is independent of the period $T^*$.
Hence, one can have time-periodic solutions $g^{*}$ with the period $T^*$ and
$g_{l}^{*}$ with the period $T^{*}/2^l$ for $l \in \mathbb N$ 
under the same assumption on $\delta$.
Furthermore, $g^{*}=g_{l}^{*}$ follows from 
Proposition \ref{tps1}, since $g^{*}$ and $g_{l}^{*}$ are 
the time-periodic solutions with the period $T^{*}$ and satisfy \eqref{unies1}.
Now we know that
\[
g^{*}\left(0,x,\xi\right)=
g^{*}\left(\frac{i}{2^l}T^{*},x,\xi\right)
\quad \text{for $i=1,2,3,\ldots,2^l$ and $l=0,1,2,\ldots$.} 
\]
Because the set 
$\cup_{l \geq 0} \{{i}/{2^l} \ ; \ i=1,2,3,\ldots,2^l\}$
is dense in $[0,T^{*}]$,
we see from $g^{*} \in C(\mathbb R; L^{2})$
that $g^{*}$ is independent of $t$.
Now it is straightforward to check that $g^{*}$ satisfies the conditions in Definition \ref{DefS}.
Therefore, $g^{s}=g^{*}$ is the desired stationary solution.
\end{proof}

Now Theorem \ref{th4} immediately follows from 
Propositions \ref{tps1}, \ref{tps2}, and \ref{ss1}.

\subsection{Asymptotic stability}\label{sec7}
Finally, we prove the asymptotic stability of {{the}} time-periodic and stationary solutions. 
Any mild solution $g$ in Theorem \ref{global1} satisfies \eqref{exiapes0} with $k=0$. 
It means that
\begin{equation*}
[[(g_{l}-g)(t)]]_{\beta} \lesssim e^{-\kappa t/2} \quad \text{for $t>0$}.
\end{equation*}
Letting $l \to \infty$ and using \eqref{convergent1},  we have 
\begin{equation*}
[[(g^{*}-g)(t)]]_{\beta} \lesssim e^{-\kappa t/2} \quad \text{for $t>0$}.
\end{equation*}
Therefore we see that the time-periodic and stationary solutions are asymptotically stable. 

This fact and Theorem \ref{global1} together imply Theorem \ref{th5}. 
Note that Theorem \ref{global1} ensures the time-global solvability of the initial-boundary value problem \eqref{pe0}.

\subsection*{Acknowledgements}
{The first author is supported by JSPS KAKENHI Grant (No.\ 20K14338). The second author is supported by JSPS KAKENHI Number 18K03364. The third author is supported by AMS-Simons Foundation for an AMS-Simons Travel Grant.

\providecommand{\bysame}{\leavevmode\hbox to3em{\hrulefill}\thinspace}
\providecommand{\MR}{\relax\ifhmode\unskip\space\fi MR }

\end{document}